\documentclass[review]{elsarticle}
\usepackage{setspace}
\setstretch{1}

\usepackage{hyperref}
\usepackage{amsmath}
\usepackage{amssymb}
\usepackage[mathlines]{lineno}
\usepackage{amsthm, mathtools, empheq}
\usepackage{paralist, mathtools}
\usepackage{caption}
\usepackage{subcaption}
\usepackage{graphics}
\usepackage{graphicx}
\usepackage{xcolor}
\usepackage{ulem}
\usepackage[title]{appendix}
\usepackage{soul} 
\usepackage[english]{babel}
\usepackage[autostyle]{csquotes}
\usepackage{placeins}
\usepackage{enumerate, comment}
\usepackage{mathrsfs}
\usepackage{mathtools}
\usepackage{epstopdf}
\allowdisplaybreaks

\newtheorem{theorem}{Theorem}[section]
\newtheorem{corollary}{Corollary}

\newtheorem{lemma}[theorem]{Lemma}

\theoremstyle{definition}
\newtheorem{definition}[theorem]{Definition}
\newtheorem{remark}{Remark}

\newcommand{\ve}{\varepsilon}

\newcommand{\eff}{{\mathrm{eff}}}



\journal{}









\begin{document}

\begin{frontmatter}

\title{Homogenization of an indefinite spectral problem arising in population genetics}
\author[ad1]{Srinivasan Aiyappan}
\author[ad1]{Aditi Chattaraj}
\author[ad2]{Irina Pettersson\corref{mycorrespondingauthor}}
\cortext[mycorrespondingauthor]{Corresponding author}
\ead{irinap@chalmers.se}

\address[ad1]{Department of Mathematics, Indian Institute of Technology Hyderabad, Sangareddy, Telangana, India 502284}
\address[ad2]{Chalmers University of Technology and University of Gothenburg, Sweden}

\begin{abstract}
We study an indefinite spectral problem for a second-order self-adjoint elliptic operator in an asymptotically thin cylinder. The operator coefficients and the spectral density function are assumed to be locally periodic in the axial direction of the cylinder. The key assumption is that the spectral density function changes sign, which leads to infinitely many both positive and negative eigenvalues. The asymptotic behavior of the spectrum, as the thickness of the rod tends to zero, depends essentially on the sign of the average of the density function. We study the positive part of the spectrum in a specific case when the local average is negative. We derive a one-dimensional effective spectral problem that is a harmonic oscillator on the real line, and prove the convergence of spectrum. A key auxiliary result is the existence of a positive principal eigenvalue of an indefinite spectral problem with the Neumann boundary condition on a periodicity cell. This study is motivated by applications in population genetics where spectral problems with sign-changing weight naturally appear.
\end{abstract}

\begin{keyword}
Indefinite spectral problem, sign-changing spectral density, homogenization, thin domain, localization of eigenfunctions.

\vspace{2mm}
\MSC[2010] Primary: 35B27, 35B40, 35P15, 74K10, 35J25

\end{keyword}

\end{frontmatter}



\tableofcontents
\nolinenumbers

\section{Introduction}

This paper deals with the homogenization of an indefinite spectral problem for a second-order elliptic operator defined in a thin cylindrical domain. The key assumption is the presence of a sign-changing weight function in front of the spectral parameter and the local periodicity of the operator coefficients. We impose the homogeneous Neumann boundary condition on the lateral boundary of the cylinder and the homogeneous Dirichlet condition on the bases. We study the asymptotic behavior of the spectrum as the thickness of the cylindrical domain vanishes. 

Spectral problems with indefinite weight arise when modeling population genetics \cite{fleming1975selection,Yuan_Lou_Nagylaki_WeiMingNi-2013DCDS, Nagylaki-Biomath21}. Consider, for example, a model with two types of alleles $A_1~\text{and}~A_2$, and let $p(t,x)$ denote the frequency of the first allele $A_1$ at time $t$ and point $x$ in some bounded habitat $\mathcal{O}$. Then $p=p(t,x)$ is assumed to satisfy a nonlinear evolution equation:
\begin{align*}
        &\displaystyle\frac{\partial p}{\partial t} = m \Delta p + \lambda s(x) p(1-p) ~~ \mbox{in}~ (0, \infty)\times\mathcal{O},\\
        &  \nabla p  \cdot \nu = 0 ~~ \mbox{on} ~ (0,\infty)\times\partial \mathcal{O}.
\end{align*}
The term $m\Delta p$ represents the population dispersal, $m$ is the rate of migration or dispersal, $s(x)p(1-p)$ represents the effect of natural selection, and $\lambda>0$ is a parameter (appearing after non-dimensionalization of the problem). In particular, $s(x)$ is a selection coefficient favoring allele $A_1$ at location $x$ and $p(1-p)$ is an approximation for the genotype frequency in the case of weak selection. The unknown frequency satisfies $0\le p(x,t) \le 1$ in $\overline{\mathcal{O}}$. To model that a selective advantage at some regions of $\mathcal{O}$ might become a disadvantage in others, we assume that $s(x)$ changes sign. The homogeneous Neumann boundary condition can be interpreted such as there is no flow of genes into or out of $\mathcal{O}$. There are two trivial stationary solutions: $p=1$ (only the  first allele occurs in the population) and $p=0$ (only the second allele occurs). Bifurcation points of the above stationary solutions provide the existence of physically interesting non-constant equilibrium solutions. It has been proved in \cite{fleming1975selection} that if $\lambda>0$ is a bifurcation point of $p=0$, then $\lambda$ is an eigenvalue of the linearized problem 
\begin{equation}
\label{eq:linearized}
\begin{aligned}
-&m \Delta u(x) = \lambda s(x) u(x) ~~ \mbox{in}~ \mathcal O,\\
&  \nabla u  \cdot \nu = 0 ~~ \mbox{on} ~ \partial \mathcal{O},
\end{aligned}
\end{equation}
which is a spectral problem with a sign-changing weight. The stability of the trivial equilibria are determined by the sign of the average of the selection coefficient $\int_{\mathcal O} s(x)\, dx$ and by the value of the smallest positive eigenvalue $\lambda_1$ of \eqref{eq:linearized}. {In particular, if $\int_{\mathcal O} s(x)\, dx<0$, that is when the type $A_2$ is advantageous}, then $p=1$ is unstable for any $\lambda>0$, while $p=0$ is stable for $\lambda<\lambda_1$ and unstable for $\lambda>\lambda_1$. It has been proved that a non-trivial equilibrium appears when $\lambda>\lambda_1$ (see Sec. 4, \cite{fleming1975selection}).

It is natural to extend the above model to the case when the diffusion $m$ and selection criteria $s$ are spatially heterogeneous and to allow for oscillations in $x$, which motivates us to consider an indefinite spectral problem with rapidly varying coefficients, with a small period of oscillations denoted by $\ve>0$. The thickness of the cylidrical domain is assumed to $O(\ve)$, of the same order as the period of oscillations. Modeling population genetics in random media is technically challenging, and periodic approximations are commonly employed. In this work, however, we adopt a more general framework by assuming local periodicity of the coefficients, rather than the classical global periodicity (see problem \eqref{orig-prob} and hypotheses (H1)--(H5)). From the perspective of population genetics, the asymptotic behavior of the first positive eigenvalue of \eqref{orig-prob} plays a crucial role, as it determines a critical threshold beyond which instability—and consequently, the emergence of a non-trivial equilibrium—occurs. We focus on the asymptotic behavior of the positive eigenvalues $\lambda_\ve$, as $\ve \to 0$, and show that it is governed by the positive principal eigenvalue of an auxiliary spectral problem \eqref{eq:cell-problem} and grows as $1/\ve^2$ (see Theorem \ref{th:main}). Moreover, we analyze the asymptotic behavior of the corresponding eigenfunction, which in the context of population genetics can be interpreted as the spatial distribution of the frequency of a certain gene. This eigenfunction exhibits oscillatory behavior and is modulated by a rescaled eigenfunction of a harmonic oscillator problem on $\mathbb R$ (see problem \eqref{eq:eff-prob}). 
The key components of the present work are: the presence of an indefinite weight, locally periodic coefficients, and, as a consequence, the localization of eigenfunctions corresponding to positive eigenvalues in the case of negative local average of the spectral weight. In addition, the thin domain leads to the dimension reduction problem.  

Homogenization of spectral problems has been extensively studied starting from 70s. Kozlov in \cite{S.M.Kozlov_1980} studied the averaging phenomena of random operators. Homogenization of elliptic spectral problem with Dirichlet data was studied by Kesavan for both periodic and non-periodic domains \cite{S.Kesavan_part-1_1979,S.Kesavan_Part-2_1979}. In classical cases, the convergence of spectra follows from the convergence of the solutions to boundary value problems. Singularly perturbed spectral problem show, however, a different asymptotic behavior. Below we describe some closely related results.

In \cite{capdeboscq1998homogenization} a periodic spectral problem for a second-order elliptic operator with a large drift term was studied. It was shown that the eigenfunctions deviate exponentially in $\ve^{-1}$ from
the solutions of an eigenvalue problem for an homogenized diffusion equation, and
the corresponding eigenvalues are shifted by a constant factor. The spectral problem with neutronic multigroup diffusion was studied in \cite{ALLAIRE_Capdeboscq_2000} and a drift in linear transport was studied by Bal in \cite{Bal2001}. In these works, under periodicity assumptions, a factorization with an eigenfunction of an auxiliary spectral problem is used to describe the oscillations.  In \cite{Allaire-Piatniski-Commun.PDE} the authors consider the homogenization of the spectral problem for
a singularly perturbed diffusion equation in a periodic
setting. Under the
hypothesis that the first cell eigenvalue admit a unique minimum in the domain with non-degenerate
quadratic behavior, it was proved that the k-th
original eigenfunction is asymptotically given
by the product of the first cell eigenfunction at the $\ve$ scale and the eigenfunction of an harmonic oscillator problem at the $\sqrt{\ve}$ scale. 

In \cite{NazarovGomezMartinez-JDE2021}, the authors have considered a Dirichlet spectral problem in a bounded domain with a smooth boundary surrounded by a thin band. The eigenfunctions satisfy two different spectral problems, one inside the fixed bounded domain and another inside the thin band. Under natural interface conditions, it is shown that a localization phenomenon occurs near the extrema of the curvature see \cite{kamotskiiNazarov-2000eigenfunctions, Nazarov2002spectr_thin_domain, HelfferKachmar-RobinLaplacian_var._curvature} for maxima and for minima \cite{BorisovFreitas2009, SolomyakFreidlander-2009}. In the problems for banded domains, as in \cite{NazarovGomezMartinez-JDE2021}, they occur for both maxima and minima. 

It turns out that introducing, in addition to singular perturbation, local periodicity in the coefficients or in the geometry of the domain leads not only to oscillating eigenfunctions, but also a localization of eigenfunctions.
A singularly perturbed operator in a locally periodic setting where a localization of eigenfunctions occurs has been considered in \cite{piat2013localization}. It deals with a spectral problem for a second-order elliptic operator in a periodically perforated bounded domain with a Fourier boundary condition, and the coefficient in the boundary operator is a function of slow
argument. In the subcritical case, the asymptotic expansion is non-standard being a series in powers of $1/4$, and the localization of eigefunctions occurs in the same scaling. Similar singularly perturbed spectral problems where the localization of eigenfunctions takes place are considered in \cite{PaPe-ApplicAnal-2014}, \cite{KlasPe-MMA-2017}, and \cite{Pettersson2022JMAA}. 
In \cite{PaPe-ApplicAnal-2014}, the authors considered an elliptic spectral problem with a large potential of order $\ve^{-\beta}$ and locally periodic coefficients stated on a thin rod with locally periodic perforation. 
In the subcritical case when $0 < \beta < 2$, under the assumption that the potential has a global minimum at zero, it is proved that the $j$th eigenfunction can be approximated by a scaled exponentially decaying eigenfunction of order $\ve^{1/4}$ which is constructed in terms of the $j$th eigenfunction of an one-dimensional harmonic oscillator operator. 

The work \cite{Pettersson2022JMAA} is devoted to a spectral problem of an elliptic operator with locally periodic coefficients in a thin cylinder with locally periodic lateral boundary and homogeneous Dirichlet conditions. The eigenvalues are shown to be of order $\ve^{-2}$ when the thickness of the cylinder $\ve$ tends to zero, and are described in terms of the first eigenvalue of the auxiliary spectral cell problem.

Another situation when localization of eigenfunctions appear is when studying the asymptotics of the indefinite spectral problems.
One of the pioneers in studying indefinite spectral problems was Hilbert \cite{hilbert1924grundzuge}, who proved the existence of two infinite sequences of eigenvalues, positive and negative. Later, spectral problems with sign-changing weight has attracted a lot of attention due to the applications in population genetics. In \cite{fleming1975selection} the author analyzed bifurcation and stability in a selection-migration model, showing that indefinite weight leads to stability switching and multiple equilibria depending on dispersal rate. The existence of principal eigenvalues of the Laplacian with Dirichlet and Neumann boundary conditions in a bounded domain can be found in \cite{brown1980existence}. The question of completeness of eigenfunctions corresponding to positive and negative eigenvalues have been studied by Pyatkov \cite{pyatkov1998indefinite}.

The asymptotics of a Dirichlet spectral problem for an elliptic operator with indefinite weight in a bounded domain with periodic coefficients is studied for all cases of the sign of the average in \cite{nazarov2011homogenization}. If the average of the weight is strictly positive, the positive eigenvalues and the
corresponding eigenfunctions show the same regular limiting behavior as in the case of pointwise positive spectral density. If the mean value of zero, then the limit spectral problem generates a quadratic operator pencil, and the eigenvalues are of order $1/\ve$. In the case of negative average, the positive eigenvalues are of order $1/\ve^2$, and the corresponding eigenfunctions prove to be rapidly oscillating. The technique in the later case involved describing the Bloch spectrum of a periodic operator.

In \cite{pankratova2011homogenization}, the authors derived the asymptotics for the positive eigenvalues in the case of positive local average of the spectral weight, and both positive and negative when the average changes sign. The most difficult case was not studied: the asymptotics of the negative part of the spectrum for the positive local average. This is the focus of the present paper. We will show (see Theorem \ref{th:main}) that the positive eigenvalues grow as $1/\ve^2$, and the corresponding eigenfunctions localize under the assumption that the principal positive eigenvalue of an auxiliary cell problem attains a unique global minimum in the domain. The existence of a positive principal eigenvalue of a cell spectral problem is of interest in its own. Moreover, we study the regularity of eigenvalues and eigenfunctions of the the cell spectral problem with respect to the slow variable, which plays a role of a parameter. 

We mention also \cite{piat2012steklov}, where the homogenization of Steklov spectral problem for a divergence form elliptic operator has been done in a periodically perforated domain under the assumption that the spectral weight function changes sign. It has been shown that in the case of negative average of the spectral weight function over the boundary of the holes, the positive eigenvalues grow as $1/\ve$,  and the corresponding eigenfunctions are rapidly oscillating. For the qualitative theory of nonlinear indefinite spectral problems we refer to \cite{hess1980some}, and for the homogenization of such problems in periodic setting to \cite{bonder2002nonlinear}.

The rest of this paper is organized as follows. In Section \ref{Sec setup}, we formulate the problem and state the main result, Theorem \ref{th:main}. 
In Section \ref{sec:cell-prob}, we introduce an auxiliary spectral problem on a periodicty cell and prove the existence of a principle positive eigenvalue in Lemma \eqref{positive-eignfn_condition}.
The regularity of the principal eigenvalue and the eigenfunction is discussed in Lemma \ref{lemm:Reqularity_mu}. Section \ref{sec: proof main theorem} is devoted to the proof of the main theorem. After factorizing the original eigenfunctions with the positive cell eigenfunction of \eqref{eq:cell-problem} in Section \ref{sec factorization and estimates eigenvalue}, we rescale the factorized problem and derive a priori estimates in Section \ref{sec rescaling and apriori estimate}. In Section \ref{sec passage of limit}, we pass to the limit using the two-scale convergence in spaces with singular measures. The definition of the two-scale convergence in spaces with singular measures is given in \ref{Appendix A}, and a mean-value property for the oscillating functions is given in \ref{Appendix B}.

\section{Problem setup}
\label{Sec setup}
Let $Q$ be a bounded $C^{2,\alpha}$ domain in $\mathbb{R}^{d-1}$, $d\ge 2$, with a boundary $\partial Q$. The points in $\mathbb{R}^d$ are denoted $x = (x_1,x')$, where $x' = x_2,...,x_d$. For a small parameter $\ve>0$, denote by $\Omega_\ve=[-1,1] \times (\ve Q)$ a thin rod with the lateral boundary $\Sigma_\ve = (-1,1) \times \partial (\ve Q)$ and the bases $S_{\ve, \pm} = \{\pm 1\} \times (\ve Q)$. In the cylinder $\Omega_\ve$, we consider the following spectral problem:
\begin{equation}
\label{orig-prob}
\left\{
\begin{array}{lcr}
\displaystyle
- \mathrm{div}\Big(a^\ve(x) \nabla u^\ve(x)\Big)=
\lambda^\ve \, \rho^\ve (x)\,  u^\ve(x), \quad \hfill x \in \Omega_\ve,
\\[2mm]
\displaystyle
a^\ve(x) \, \nabla u^\ve(x) \cdot n = 0, \quad \hfill x \in \Sigma_\ve,
\\[2mm]
\displaystyle
u^\ve(-1,x') = u^\ve(1,x')=0, \quad \hfill x'\in \ve Q
\end{array}
\right.
\end{equation}
with the $d\times d$ matrix $a^\ve$ and the scalar weight $\rho^\ve$:
\begin{align}
\begin{split}
&a^\ve(x) = a\big(x_1, \frac{x}{\ve}\big), \quad
\rho^\ve(x) = \rho \big(x_1, \frac{x}{\ve}\big).
\end{split}
\end{align}
We assume the following  conditions hold:
\begin{itemize}
\item[$\bf(H1)$]
$a_{ij}(x_1,y), \rho(x_1,y)\in C^{1,\alpha}([-1,1]; C^{0,\alpha}(\overline{Y}))$ for some $\alpha>0$; where $Y = (0,1] \times Q$ is the periodicity cell with the lateral boundary $\Sigma=(0,1)\times \partial Q$.
\item[$\bf(H2)$]
The functions $a_{ij}(x_1,y)$ and $\rho(x_1,y)$ are $1$-periodic in $y_1$. 
\item[$\bf(H3)$]
The matrix $a(x_1,y)$ is symmetric and satisfies the uniform ellipticity condition, that is for any $x_1 \in [-1,1]$ and $y \in Y$, and for some $\Lambda>0$ it holds
$$
\sum \limits_{i,j=1}^{d} a_{ij}(x_1, y) \xi_i \xi_j \geq \Lambda |\xi|^2, \quad \xi \in \mathbb{R}^d.
$$
\item[$\bf(H4)$]
The weight function $\rho(x_1,y)$ changes sign, that is for any $x_1 \in [-1,1]$ the sets $\{y \in Y \,:\,\rho(x_1,y)<0\}$ and $\{y \in Y \,:\,\rho(x_1, y)>0\}$ have positive Lebesgue measures. 
\item[$\bf(H5)$] $\int_Y \rho(x_1,y)\, dy <0$ for all $x_1\in[-1,1]$.
\end{itemize}
The weak formulation of problem \eqref{orig-prob} reads: Find {$\lambda^\ve \in \mathbb{C}$} (eigenvalues) and $u^\ve \in H^1(\Omega_{\ve}) \setminus \{0\}$ (eigenfunctions) such that $u^\ve(\pm 1, x')= 0$ and
\begin{equation}
\label{var-orig-prob}
(a^\ve \, \nabla u^\ve, \nabla v)_{L^2(\Omega_{\ve})} = \lambda^\ve \, (\rho^\ve \, u^\ve, v)_{L^2(\Omega_{\ve})},
\end{equation}
where $(\cdot, \cdot)_{L^2(\Omega_{\ve})}$ denotes the standard scalar product in $L^2(\Omega_{\ve})$.

The next lemma characterizes the spectrum of problem \eqref{orig-prob}.
\begin{lemma}
\label{existence_eigenvalue_orig prob}
Under the assumptions $\bf(H1)-(H4)$, the spectral problem \eqref{orig-prob} has a real and discrete spectrum that consists of two infinite sequences
$$
\begin{array}{c}
0 < \lambda_1^{\ve, +} \leq \lambda_2^{\ve, +} \leq \ldots \leq \lambda_j^{\ve, +} \leq \ldots \to + \infty,
\\[2mm]
0 > \lambda_1^{\ve, -} \geq \lambda_2^{\ve, -} \geq \ldots \geq \lambda_j^{\ve, -} \geq \ldots \to - \infty.  
\end{array}
$$
The corresponding eigenfunctions $u_j^{\ve,\pm}$ may be chosen to satisfy the orthogonality and normalization condition{
\begin{equation}
\label{norm cond u^ve}
(u_i^{\ve, \pm}, u_j^{\ve, \pm})_{L^2(\Omega_{\ve})} = \ve^{1/2}\ve^{d-1} \, |Q| \, \delta_{ij},
\end{equation}}
where $|Q|$ is the Lebesgue measure of $Q$ and $\delta_{ij}$ is the Kronecker delta.
\end{lemma}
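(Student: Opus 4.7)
The plan is to recast \eqref{orig-prob} as an eigenvalue problem for a compact self-adjoint operator on a Hilbert space. Introduce
\[
V^\ve := \{u \in H^1(\Omega_\ve) : u(\pm 1, x') = 0 \text{ for } x' \in \ve Q\},
\]
equipped with the inner product $\langle u, v\rangle_{V^\ve} := \int_{\Omega_\ve} a^\ve \nabla u \cdot \nabla v \, dx$. By the uniform ellipticity \textbf{(H3)} and the Poincaré inequality (valid thanks to the Dirichlet condition on the bases $S_{\ve, \pm}$), this is an inner product equivalent to the standard one on $H^1(\Omega_\ve)$.

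Using the Riesz representation theorem, define a bounded linear operator $T^\ve : V^\ve \to V^\ve$ by
\[
\langle T^\ve u, v \rangle_{V^\ve} = \int_{\Omega_\ve} \rho^\ve u\, v \, dx \quad \text{for all } v \in V^\ve.
\]
Since $\rho^\ve$ is real-valued and bounded (by \textbf{(H1)}) and $\langle \cdot, \cdot\rangle_{V^\ve}$ is symmetric, $T^\ve$ is self-adjoint with respect to $\langle \cdot, \cdot\rangle_{V^\ve}$. The Rellich--Kondrachov compact embedding $V^\ve \hookrightarrow L^2(\Omega_\ve)$ (valid on the bounded Lipschitz domain $\Omega_\ve$), combined with boundedness of $\rho^\ve$, yields compactness of $T^\ve$. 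The weak formulation \eqref{var-orig-prob} is equivalent to $T^\ve u = \mu u$ with $\mu = 1/\lambda^\ve$, so the Hilbert--Schmidt spectral theorem supplies a sequence of real eigenvalues $\mu_k \to 0$ together with an $\langle \cdot, \cdot\rangle_{V^\ve}$-orthonormal complete system of eigenfunctions.

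The crucial step, which genuinely uses the indefinite hypothesis \textbf{(H4)}, is to exhibit infinitely many strictly positive and infinitely many strictly negative $\mu_k$. By \textbf{(H4)}, both $\{\rho^\ve > 0\}$ and $\{\rho^\ve < 0\}$ have positive Lebesgue measure in $\Omega_\ve$. For any integer $N$, one can choose smooth functions $\varphi_1, \ldots, \varphi_N \in V^\ve$ with pairwise disjoint compact supports contained in $\{\rho^\ve > 0\}$ (well inside $\Omega_\ve$, so the Dirichlet condition is automatically satisfied); then on $W_+^N := \operatorname{span}\{\varphi_1, \ldots, \varphi_N\}$ the quadratic form $u \mapsto \int_{\Omega_\ve} \rho^\ve u^2 \, dx = \langle T^\ve u, u\rangle_{V^\ve}$ is positive definite. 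The Courant--Fischer min--max principle then produces at least $N$ strictly positive eigenvalues of $T^\ve$; letting $N \to \infty$ gives infinitely many. The symmetric construction on $\{\rho^\ve < 0\}$ supplies infinitely many strictly negative eigenvalues. Arranging the positive $\mu_k$ in decreasing order and setting $\lambda_j^{\ve,+} := 1/\mu_j^+$ (analogously for $\lambda_j^{\ve,-}$) yields the two sequences claimed, with $\lambda_j^{\ve,\pm} \to \pm\infty$.

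For the final orthogonality and normalization assertion, eigenfunctions of $T^\ve$ associated with distinct eigenvalues are automatically $\langle \cdot, \cdot\rangle_{V^\ve}$-orthogonal; within each finite-dimensional eigenspace, one applies Gram--Schmidt in $L^2(\Omega_\ve)$ and then rescales each eigenfunction so that $\|u_j^{\ve,\pm}\|_{L^2(\Omega_\ve)}^2 = \ve^{1/2}\ve^{d-1}|Q|$, producing \eqref{norm cond u^ve}. The apparently peculiar scaling factor $\ve^{1/2}\ve^{d-1}|Q|$ reflects the cross-sectional volume of $\Omega_\ve$ together with the localization length $\sqrt{\ve}$ that will arise in the main theorem. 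The only step that goes beyond standard Hilbert--Schmidt theory is the indefinite min--max argument producing both infinite sequences; it is the hinge where hypothesis \textbf{(H4)} enters.
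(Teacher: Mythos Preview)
The paper does not give its own proof of this lemma; immediately after the statement it writes that a proof ``can be found in \cite{pyatkov1998indefinite} or Lemma 1 in \cite{nazarov2011homogenization}.'' Your argument is the standard route those references take: reduce to a compact self-adjoint operator on the energy space and invoke the min--max principle on subspaces supported in $\{\rho^\ve>0\}$ and $\{\rho^\ve<0\}$ to produce both infinite sequences. That part is correct; note that the continuity of $\rho$ (from \textbf{(H1)}) makes $\{\rho^\ve>0\}$ open, so the disjoint-support construction is legitimate.

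There is, however, a small gap in the final paragraph. You obtain $\langle\cdot,\cdot\rangle_{V^\ve}$-orthogonality between eigenfunctions with distinct eigenvalues, which via the eigenvalue relation is equivalent to the \emph{weighted} orthogonality $(\rho^\ve u_i,u_j)_{L^2(\Omega_\ve)}=0$, not to plain $L^2$-orthogonality. Gram--Schmidt in $L^2$ within a single eigenspace does not repair this across different eigenvalues, so your argument as written does not yield $(u_i^{\ve,\pm},u_j^{\ve,\pm})_{L^2(\Omega_\ve)}=0$ for $i\neq j$. In the indefinite setting this is in fact the natural state of affairs: the eigenfunctions are orthogonal with respect to $(\rho^\ve\cdot,\cdot)_{L^2}$ (equivalently $(a^\ve\nabla\cdot,\nabla\cdot)_{L^2}$), and the condition \eqref{norm cond u^ve} should be read as a normalization of each eigenfunction together with that weighted orthogonality. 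Either adjust your last paragraph to state the $\rho^\ve$-weighted orthogonality you actually prove, or flag that the plain-$L^2$ off-diagonal part of \eqref{norm cond u^ve} is a notational convenience rather than a consequence of the spectral theorem.
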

\begin{remark}
The reason for choosing this particular normalization \eqref{norm cond u^ve} is to get the eigenfunctions of the rescaled problem \eqref{eq:rescle-final} and the limit problem \eqref{eq:eff-prob} normalized in a standard way (without the small parameter $\ve$).
\end{remark}
A proof of Lemma \ref{existence_eigenvalue_orig prob} can be found in \cite{pyatkov1998indefinite} or Lemma 1 in \cite{nazarov2011homogenization}.
In the present work, we study the asymptotics of the positive eigenvalues $\lambda_j^{\ve, +}$, as $\ve \to 0$, under the assumption (H5) that the local average of the weight function is negative $\int_Y \rho(x_1, y)\, dy <0$. 
In this case, as will be shown below, the positive eigenvalues grow as $\ve \to 0$, and the asymptotics can be obtained by using a special factorization with a positive eigenfunction of the auxiliary spectral problem stated on the periodicity cell $Y$.
Namely, we will use a positive principal eigenvalue $\mu(x_1)$ of the auxiliary spectral cell problem with sign-changing weight, for each $x_1\in[-1,1]$:
\begin{align}
\label{eq:cell-problem-0}
\begin{cases}
&-{\rm div}_y (a(x_1, y) \nabla_y \psi(x_1, y)) = \mu(x_1) \rho(x_1, y)\psi(x_1, y), \quad \hfill y\in Y, \\
&a(x_1, y) \nabla_y \psi(x_1, y) \cdot \nu = 0, \quad \hfill y\in \Sigma,\\
&y_1 \mapsto \psi(x_1, y_1, y') \quad \mbox{is} \,\, 1-\mbox{periodic}.
\end{cases}
\end{align}
We say that $\mu$ is a principal eigenvalue of \eqref{eq:cell-problem-0} if it possesses a unique strictly positive eigenfunction. Obviously, $\mu=0$ is one such principal eigenvalue with a constant eigenfunction. For our purpose, we are interested in a strictly positive principal eigenvalue $\mu$ and a non-constant eigenfunction $\Psi$. We will prove that in the case $\int_Y \rho(x_1, y)\, dy <0$ such an eigenvalue exists (see Lemma \ref{lm:existence-mu}).  

In what follows we assume that the principal positive eigenvalue $\mu(x_1)$ of \eqref{eq:cell-problem-0} satisfies the following assumption.\\[2mm]
\textbf{(H6)} The principal positive eigenvalue $\mu(x_1)$ of problem \eqref{eq:cell-problem} has a unique minimum point at $x_1 = 0$ and $\mu''(0)>0$.\\[1mm]

The main result of the paper is contained in the following theorem.
\begin{theorem}
    \label{th:main}
    Let the hypotheses (H1)--(H6) hold and denote $(\mu, \Psi)$ the principal eigenpair of \eqref{eq:cell-problem-0}. Then, for any $j$, we have the following convergence result:
\begin{align*}
    &\lambda_{j}^{\ve,+}= \frac{\mu(0)}{\ve^2} + \frac{\nu_j}{\ve} + o(\ve^{-1}), \quad \ve \to 0,\\
    &\ve^{-\frac{d-1}{2}} \| u_{j}^{\ve, +} - \Psi(0, \frac{x}{\ve}) \, v_j(\frac{x_1}{\sqrt \ve}) \|_{L^{2}(\Omega_{\ve})} \rightarrow 0, \quad \ve \to 0,
\end{align*}
where $(\nu_j, v_j)$ is the $j$th eigenpair of the harmonic oscillator
\begin{align}
\label{eq:eff-prob}
        &- \displaystyle(a^\eff v')' 
        + \big(c^\eff + \frac{1}{2}\mu''(0) x_1^2\big) v= \nu \, v, \quad x_1 \in \mathbb R.
\end{align}
The effective coefficients in \eqref{eq:eff-prob} are defined by
\begin{align}
\label{def:eff-coeff}
&a^\eff =\frac{1}{|Y|} \int_{Y} a(0, \zeta) \nabla(\zeta_{1} + N_{1}(\zeta)) \cdot \nabla(\zeta_{1} + N_{1}(\zeta)) \, d\zeta,\\
&c^\eff = \frac{1}{|Y|}\int_{Y}\Big( (a \nabla_{x} \Psi)(0,\zeta) \cdot \nabla_{\zeta}\Psi(0, \zeta) - \mathrm{div}_{x}(a \nabla_{\zeta} \Psi)(0, \zeta) \Psi(0, \zeta) \Big)\, d\zeta,
\end{align}
where $N_{1}$ satisfies the cell problem
\begin{align*}
\begin{cases}
&\mathrm{div}((a\, \Psi^2)(0, \zeta) \nabla (N_{1}(\zeta) + \zeta_{1})) = 0, \quad \hfill \zeta\in Y,\nonumber\\
&(a\, \Psi^2)(0, \zeta) \nabla (N_{1} + \zeta_{1}) = 0, \quad \hfill \zeta\in \Sigma,\\
&N_1(\cdot, \zeta') \,\, \mbox{is} \,\, 1-\mbox{periodic}.
\end{cases}
\end{align*}
\end{theorem}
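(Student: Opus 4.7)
The plan is to follow the roadmap already announced in the introduction: factorize with the positive cell eigenfunction $\Psi(x_1,y)$ to shift the dominant $\mu(0)/\ve^2$ part of the spectrum, rescale the axial variable at scale $\sqrt{\ve}$ to see the localization near $x_1=0$, and pass to the two-scale limit to identify the harmonic oscillator \eqref{eq:eff-prob}.

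First I would substitute $u^\ve(x) = \Psi(x_1, x/\ve)\, w^\ve(x)$ into \eqref{var-orig-prob} and use the cell problem \eqref{eq:cell-problem-0} to cancel the singular term of order $\ve^{-2}$. The resulting weak formulation for $w^\ve$ takes the schematic form
\begin{equation*}
\int_{\Omega_\ve} a^\ve \Psi^\ve \Psi^\ve \nabla w^\ve \cdot \nabla \varphi \, dx + R^\ve(w^\ve,\varphi) = \Big(\lambda^\ve - \frac{\mu(x_1)}{\ve^2}\Big)\int_{\Omega_\ve} \rho^\ve (\Psi^\ve)^2 w^\ve \varphi\, dx,
\end{equation*}
where $\Psi^\ve(x)=\Psi(x_1,x/\ve)$ and $R^\ve$ collects the lower-order terms produced by differentiating $\Psi$ in the slow variable and the mixed slow/fast derivatives; these are of order $\ve^{-1}$ and $\ve^0$. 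The crucial observation is that although $\rho$ itself changes sign, the \emph{averaged} weight $\langle \rho\, \Psi^2\rangle_Y$ is strictly positive: it is the derivative of $\mu$ evaluated at a constant eigenfunction test, so the factorized problem becomes an effectively \emph{positive} spectral problem, eliminating the indefiniteness.

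Next I would exploit hypothesis (H6). Writing $\mu(x_1)=\mu(0)+\tfrac12 \mu''(0)x_1^2 + O(x_1^3)$ near $x_1=0$ and expecting $w^\ve$ to concentrate at scale $\sqrt{\ve}$, I introduce the rescaling $\xi_1 = x_1/\sqrt{\ve}$ and $W^\ve(\xi_1,x')=w^\ve(\sqrt{\ve}\,\xi_1,x')$. Multiplying the equation through by $\ve$ and choosing the normalization \eqref{norm cond u^ve} so that the rescaled eigenfunctions have $L^2$-norm of order $1$ on a domain of thickness $\ve$ in the transverse directions, the term $\ve^{-2}(\mu(x_1)-\mu(0))$ becomes $\tfrac12 \mu''(0)\xi_1^2 + o(1)$, producing the harmonic potential; the rescaled eigenvalue is $\beta^\ve := \ve(\lambda^\ve - \mu(0)/\ve^{2})$, which should converge to $\nu_j$. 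At this point I would derive a priori bounds: energy estimates in the weighted $H^1$-norm with weight $\rho\Psi^2$ (positive on average), together with an exponential/Agmon-type decay estimate in $\xi_1$ coming from the confining quadratic potential, ensuring compactness in $L^2(\mathbb R\times Q)$.

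Finally I would pass to the limit using two-scale convergence in spaces with singular measure (the measure being concentrated on $\{(\xi_1,y)\in\mathbb R\times Y\}$ with the thin cross-section collapsing), as referenced by the authors and detailed in their Appendix A. The two-scale limit $W_0(\xi_1,y)$ of $W^\ve$ is independent of the transverse fast variable $y'$ by the thinness, while its dependence on $y_1$ is captured by the corrector $N_1$ solving the stated cell problem (here the weight $a\Psi^2$ appears because of the factorization). Averaging over $Y$ produces the effective coefficient $a^\eff$ defined in \eqref{def:eff-coeff}; the coefficient $c^\eff$ arises from collecting the $O(\ve^0)$ terms in $R^\ve$, i.e.\ the contribution of $\nabla_x\Psi$ in the factorization. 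The upshot is that $W_0(\xi_1,y)=v(\xi_1)$ solves \eqref{eq:eff-prob} with $\nu=\lim \beta^\ve$. Convergence of the full spectrum is then obtained by the standard min-max/Courant-type argument combined with the construction of good test-functions $\Psi(0,x/\ve)\,v_j(x_1/\sqrt{\ve})\,(1+\ve\, N_1(x/\ve)\partial_{\xi_1}v_j)$ that achieve the effective eigenvalue in the limit.

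The hard part will be controlling the perturbation $\mu(x_1)-\mu(0)$ on the whole cylinder, not only near $x_1=0$: away from the minimum, the potential $\ve^{-2}(\mu(x_1)-\mu(0))$ is huge, which should force exponential smallness of the eigenfunctions there, but making this rigorous requires an Agmon-type estimate in the locally periodic setting. A second delicate point is the regularity of the map $x_1\mapsto \Psi(x_1,\cdot)$ (addressed in Lemma \ref{lemm:Reqularity_mu}), which is needed to justify the Taylor expansion of $\mu$ and the pointwise use of $\nabla_{x_1}\Psi$ inside the corrector $c^\eff$.
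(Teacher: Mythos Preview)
Your roadmap matches the paper's proof almost step for step: factorization with $\Psi$, the $\sqrt{\ve}$ rescaling, two-scale convergence with singular measures, and a min--max argument with the corrector-augmented test functions $\Psi(0,x/\ve)(v_j(x_1/\sqrt{\ve})+\sqrt{\ve}\,N_1(x/\ve)v_j'(x_1/\sqrt{\ve}))$ for the spectral ordering.

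Two points where the paper differs from what you sketch. First, the paper does \emph{not} use Agmon-type exponential decay to control the eigenfunctions away from $x_1=0$. Instead, after rescaling it simply reads off the bound $\int |z_1 w^\ve|^2\,d\mu_\ve\le C$ directly from the weak formulation: the term $\ve^{-1}(\mu(\sqrt{\ve}z_1)-\mu(0))\rho_\Psi^\ve$ is first replaced by its cell average via the mean-value estimate (Lemma~\ref{lm:oscillating-integrals}), and then, since $\mu$ has a unique nondegenerate minimum, $(\mu(\sqrt{\ve}z_1)-\mu(0))/\ve$ is quadratically equivalent to $\tfrac12\mu''(0)z_1^2$ on the whole rescaled interval, giving the weighted $L^2$ bound and hence compactness (Lemma~\ref{lm:compactness}). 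This is considerably lighter than an Agmon argument in a locally periodic medium. Second, your justification for $\langle\rho\Psi^2\rangle_Y>0$ is not quite right: it is not a derivative of $\mu$, but follows immediately from the variational characterization~\eqref{eq:mu1} (the infimum is taken over $\{(\rho v,v)>0\}$), and in fact the paper normalizes $\Psi$ so that $\int_Y\rho\Psi^2\,dy=1$.
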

In \cite{pankratova2011homogenization}, problem \eqref{orig-prob} has been studied under different assumptions on the local weight average. For the reader's convenience, we summarize the results about the spectral asymptotics in the cases studied in \cite{pankratova2011homogenization}. We denote 
\begin{align*}
&\mathcal{A}_y \, u = - {\rm div}_y (a(x_1, y) \nabla_y u),\quad
\mathcal{B}_y \, u = a(x_1, y) \nabla_y u\cdot n,\\
&\langle \rho(x_1,.) \rangle = \int_Y \rho(x_1, y)\, dy.
\end{align*}

\noindent
\textbf{Case 1:} If $\langle \rho(x_1,.) \rangle > 0$ for all $x_{1} \in [-1,1]$. Then, for any $j$,
\begin{align*}
    &\lambda_{j}^{\ve,+} \rightarrow \lambda_{j}, \quad \ve^{-\frac{d-1}{2}} \| u_{j}^{\ve,+} - u_{j} \|_{L^{2}(\Omega_{\ve})} \rightarrow 0, \quad \ve \rightarrow 0,
\end{align*}
where $(\lambda_j, u_j)$ are solutions of the limit problem
\begin{align*}
    \begin{cases}
        &- \Big( a^{\eff}(x_1) u'(x_1)\Big)' = \lambda^{0} \displaystyle\langle \rho(x_1,.) \rangle u^{0}(x_1), \quad x_1 \in (-1,1),\\
        & u^{0}(\pm 1) = 0.
    \end{cases}
\end{align*}
Here 
\begin{align}
\label{eq:a-eff}
    a^{\eff}(x_1) = \displaystyle\int_{Y} a_{1j}(x_1,y) (\delta_{1j} + \partial_{y_j} N^{1,1}(x_1,y)) \, dy,
\end{align}
and $N^{1,1}$ solves, for each $x_1\in[-1,1]$, the cell problem
\begin{align}
\label{eq:N^1,1-rho>0}
  \begin{cases}
    &\mathcal{A}_{y} N^{1,1}(x_1,y) =\displaystyle\mathrm{div}_{y} a_{.1}(x_1,y), \quad y\in Y,\\
    &\mathcal{B}_{y} N^{1,1}(x_1,y) = -a_{.1}(x_1,y)\cdot n, \quad y\in \Sigma,\\
    &y_1 \mapsto N^{1,1}(x_1,y_1, y') \,\, \mbox{is} \,\, 1- ~\text{periodic.}
    \end{cases}
\end{align}
Thus, the homogenization of the positive part of the spectrum in the case of positive average of $\rho$ is classical.\\[2mm]
\noindent
\textbf{Case 2:} If $\langle \rho(x_1,.) \rangle = 0$ for all $x_{1} \in [-1,1]$, then, for any $j$,
\begin{align*}
    &\ve \lambda_{j}^{\ve,\pm} \to \nu_{j}^{\pm}, \quad \ve^{-\frac{d-1}{2}} \| u_{j}^{\ve, \pm} - v_{j}^{\pm} \|_{L^{2}(\Omega_{\ve})} \rightarrow 0, \quad \ve \rightarrow 0,
\end{align*}
where $(\nu_j^{\pm}, v_j^{\pm})$ are the $j$th eigenpairs (with positive and negative eigenvalues) of the following  quadratic operator pencil:
\begin{equation}
\left\{
\begin{array}{l}
\displaystyle
-\Big(a^{\eff}(x_1) v'(x_1)\Big)' +
\nu \, \mathbf{B}(x_1)\, v(x_1)
- \nu^2\, \mathbf{C}(x_1)\, v(x_1)=0, \quad x_1 \in (-1,1),
\\[2mm]
\displaystyle
v(- 1)=v(1)=0.
\end{array}
\right.
\end{equation}
The functions $\mathbf{B}(x_1), \mathbf{C}(x_1)>0$ are defined by
\begin{align*}
&\mathbf{C}(x_1)= \int_Y (a\, \nabla_y N^{1,0},\nabla_y N^{1,0}) \, dy,
&\mathbf{B}(x_1)= \frac{\partial}{\partial x_1} \int_Y a\, \nabla_y N^{1,1}\cdot\nabla_y N^{1,0} \, dy.
\end{align*}
The function $N^{1,1}$ solves \eqref{eq:N^1,1-rho>0} and $N^{1,0}$ is a solution of 
\begin{equation}
\label{N^1,0-rho=0}
\left\{
\begin{array}{lcr}
\displaystyle
\mathcal{A}_y N^{1,0}(x,y) = \rho(x_1, y), \quad y \in Y,
\\[2mm]
\displaystyle
\mathcal{B}_y N^{1,0}(x_1,y)= 0, \quad y \in \Sigma,
\\[2mm]
N^{1,0}(x_1,y)\,\, \mbox{is}\,\, 1-\mbox{periodic in} \,\, y_1.
\end{array}
\right.
\end{equation}

\noindent
\textbf{Case 3:} If $\langle \rho(x_1,.) \rangle$ changes sign, then for any $j$, 
\begin{align*}
    &\lambda_{j}^{\ve,\pm} \rightarrow \lambda_{j}^{\pm}, \quad \ve^{-\frac{d-1}{2}} \| u_{j}^{\ve, \pm} - u_{j}^{\pm} \|_{L^{2}(\Omega_{\ve})} \rightarrow 0, \quad \ve \rightarrow 0,
\end{align*}
where $(\lambda_j^{\pm}, u_j^{\pm})$ are solutions of the limit problem
\begin{align}
\label{eq:eff-prob-as-expansions}
    \begin{cases}
        &- \Big( a^{\text{eff}}(x_1) u'(x_1)\Big)' = \lambda^{0} \displaystyle\langle \rho(x_1,.) \rangle u^{0}(x_1), \quad x_1 \in (-1,1),\\
        & u^{0}(\pm 1) = 0,
    \end{cases}
\end{align}
with the effective coefficients defined by \eqref{eq:a-eff}.

To summarize, one case which was not studied in \cite{pankratova2011homogenization} is {the asymptotics of the positive part of the spectrum for the negative average of the weight function}. The case of the negative part of the spectrum for a weight function with positive average is similar.

\begin{remark}
The standard asymptotic ansatz does not provide any information about the asymptotics of positive eigenvalues in the case $\langle \rho(x_1,\cdot)\rangle<0$. Indeed, let us look for a solution $(\lambda^\ve, u^\ve)$ of problem \eqref{orig-prob} in the form
\begin{equation}
\label{ansatz-1}
\begin{aligned}
    u^\ve (x) &= u(x_1) + \ve u^1(x_1,y) + \ve^2 u^2(x_1,y) +\cdots,~~~ y= \frac{x}{\ve},\\
    \lambda^\ve &= \lambda^0 + \ve\lambda^1 + \cdots,
\end{aligned}
\end{equation}
where the unknown functions $u^k(x_1,y)$ are 1-periodic in $y_1$. Substituting ansatz \eqref{ansatz-1} into \eqref{orig-prob}, applying the chain rule, and collecting power-like with respect to $\ve$ terms, we obtain a cascade of problems for $u^k$.

In particular, the right-hand side in the problem for $u^1$ suggests to choose $u^{1}(x_1,y) = N^{1,1}(x_1,y) u'(x_1)$,
where $N^{1,1}$ solves \eqref{eq:N^1,1-rho>0} for each $x_1 \in [-1,1]$. 
The compatibility condition for the problem for $u^2$ gives an equation for $u^0$:
\begin{align}
\label{eq:eff-prob-as-expansions}
    \begin{cases}
        &- \displaystyle\frac{d}{d x_1} \Big( a^{\text{eff}}(x_1) \frac{d u^{0}(x_1)}{d x_1} \Big) = \lambda^{0} \displaystyle\langle \rho(x_1,.) \rangle u^{0}(x_1), \quad x_1 \in (-1,1),\\
        & u^{0}(\pm 1) = 0.
    \end{cases}
\end{align}
Here $a^\eff$ is defined by \eqref{eq:a-eff}.
Since $\langle \rho(x_1,\cdot) \rangle < 0$, \eqref{eq:eff-prob-as-expansions} possesses only negative eigenvalues and thus provides no information about the positive eigenvalues $\lambda_j^\ve$ of the original problem \eqref{orig-prob}. In the following sections, we will show that the positive eigenvalues of \eqref{orig-prob} tend to infinity, as $\ve \to 0$, and derive the correct limit spectral problem.
\end{remark}

\section{Auxiliary spectral cell problem}
\label{sec:cell-prob}
In order to estimate the positive eigenvalues $\lambda_j^{\ve, +}$ of \eqref{orig-prob}, we will use a positive principal eigenvalue $\mu(x_1)$ of the auxiliary spectral cell problem with sign-changing weight, $x_1\in[-1,1]$:
\begin{align}
\label{eq:cell-problem}
\begin{cases}
&-{\rm div}_y (a(x_1, y) \nabla_y \psi(x_1, y)) = \mu(x_1) \rho(x_1, y)\psi(x_1, y), \quad \hfill y\in Y, \\
&a(x_1, y) \nabla_y \psi(x_1, y) \cdot \nu = 0, \quad \hfill y\in \Sigma,\\
&y_1 \mapsto \psi(x_1, y_1, y') \quad \mbox{is} \,\, 1-\mbox{periodic}.
\end{cases}
\end{align}
We say that $\mu$ is a principal eigenvalue of \eqref{eq:cell-problem} if it possesses a strictly positive eigenfunction $\psi(x_1, \cdot) \in H^1(Y)$. Obviously, $\mu=0$ is one such principal eigenvalue with eigenfunction $\Psi=1$. We will prove in Lemma \ref{lm:existence-mu} that there exists a positive principal eigenvalue.  

The Neumann problem for the Laplace operator has been studied, for example, in \cite{brown1980existence}, \cite{lou2006minimization}. In particular, in \cite{brown1980existence} it has been shown that the Neumann problem in a bounded domain $\mathcal O$
\begin{align*}
\begin{cases}
-\Delta \psi = \mu \rho \psi,\quad y\in \mathcal O,\\
\nabla \psi \cdot n =0, \quad y\in \partial \mathcal O,
\end{cases}
\end{align*}
has a positive principal eigenvalue if and only if the measure of the set $\{y:\, \rho(y)>0\}$ is positive and the average of the weight is negative $\int_{\mathcal O} \rho \, dy <0$. 
In this section, we will prove the corresponding result for \eqref{eq:cell-problem}.

Consider the operator 
$\mathcal A_y v = - \mathrm{div}_{y} \Big( a(x_{1}, y) \nabla_{y} v\Big)$
with the domain 
$$D(\mathcal A_y) = \{ v \in H^{2}(Y): a\nabla_y v\cdot \nu\big|_{\Sigma} =0, \, y_{1} \mapsto v(x_{1},y_{1},y') \text{ is } 1\text{-periodic}\}.$$ 
We define
   $$ Q_{\mu}(v) = (\mathcal A_y v, v)_{L^{2}(Y)} - \mu (\rho v, v)_{L^{2}(Y)}. $$
\begin{lemma}
\label{positive-eignfn_condition}
    If there is a positive eigenfunction corresponding to an eigenvalue $\mu(x_{1})$ of \eqref{eq:cell-problem}, then $Q_{\mu}(v) \ge 0$ for all $v \in D(\mathcal A_y)$.
\end{lemma}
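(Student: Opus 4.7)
The plan is to establish a Picone/Allegretto--Piepenbrink-type identity: exploit the positivity of the eigenfunction $\psi(x_1,\cdot)$ by factorizing an arbitrary test function as $v=\psi w$, and then recognize the resulting cross term as the weak form of the eigenvalue equation tested against $\psi w^2$. This will express $Q_\mu(v)$ as a manifestly non-negative quadratic form in $\nabla w$.

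First, I would justify the factorization. By the regularity hypothesis (H1) and standard elliptic regularity applied to \eqref{eq:cell-problem}, $\psi(x_1,\cdot)\in C^{1,\alpha}(\overline Y)$. Since $\psi$ is a strictly positive continuous function on the compact set $\overline Y$, there is a constant $c_0>0$ with $\psi\ge c_0$. Hence, for any $v\in D(\mathcal A_y)\subset H^2(Y)$, the ratio $w:=v/\psi$ lies in $H^1(Y)$ and inherits the $y_1$-periodicity, and $\psi w^2\in H^1(Y)$ is therefore an admissible test function in the weak form of \eqref{eq:cell-problem}.

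Next, I would compute the pointwise algebraic identity
$$a\nabla_y v\cdot\nabla_y v = a\nabla_y(\psi w)\cdot\nabla_y(\psi w) = \psi^2\, a\nabla_y w\cdot\nabla_y w + a\nabla_y\psi\cdot\nabla_y(\psi w^2),$$
which follows from $\nabla_y(\psi w)=\psi\nabla_y w+w\nabla_y\psi$ together with the elementary relation $w^2\nabla_y\psi+2\psi w\nabla_y w=\nabla_y(\psi w^2)$. Integrating over $Y$ and using $\psi w^2$ as a test function in the weak formulation of \eqref{eq:cell-problem} (the Neumann condition on $\Sigma$ and the $y_1$-periodicity make all boundary contributions vanish) yields
$$\int_Y a\nabla_y\psi\cdot\nabla_y(\psi w^2)\,dy = \mu(x_1)\int_Y\rho\,\psi^2 w^2\,dy = \mu(x_1)\int_Y \rho\, v^2\,dy.$$
Subtracting $\mu(x_1)\int_Y\rho v^2\,dy$ from both sides of the integrated identity then gives
$$Q_\mu(v) = \int_Y\psi^2\, a\nabla_y w\cdot\nabla_y w\,dy \ge \Lambda\int_Y \psi^2|\nabla_y w|^2\,dy \ge 0,$$
the last step by the uniform ellipticity (H3).

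The main obstacle is the preliminary justification that $w=v/\psi$ is a well-defined $H^1$ function and that $\psi w^2$ is a legitimate test function; once $\psi>0$ on $\overline Y$ is established (via continuity plus the assumption that $\psi$ is a positive eigenfunction), the rest is a routine algebraic identity and an integration by parts. If one prefers to avoid any a priori lower bound on $\psi$, the same identity can be implemented with a regularized denominator $\psi+\delta$ and the limit $\delta\downarrow 0$ taken using dominated convergence, which would be my fallback if the continuity of $\psi$ were in doubt.
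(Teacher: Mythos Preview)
Your proof is correct and takes a genuinely different route from the paper. The paper argues spectrally: it introduces the shifted operator $T_k\psi=\mathcal A_y\psi-\mu\rho\psi+k\psi$, notes that for large $k$ its spectrum is discrete with a simple bottom eigenvalue $r_1$ whose eigenfunction $\psi_1$ is positive, and then uses the orthogonality of eigenfunctions together with $(u,\psi_1)_{L^2(Y)}>0$ to conclude that the given positive eigenfunction $u$ must belong to the principal eigenspace, forcing $r_1=k$; hence $0$ is the bottom of the spectrum of $T_0$ and $Q_\mu(v)=(T_0 v,v)\ge 0$ follows from the spectral theorem. Your argument is instead a ground-state substitution (Picone/Allegretto--Piepenbrink identity): writing $v=\psi w$ and using the algebraic identity $a\nabla_y v\cdot\nabla_y v=\psi^2 a\nabla_y w\cdot\nabla_y w+a\nabla_y\psi\cdot\nabla_y(\psi w^2)$ together with the weak equation tested against $\psi w^2$ gives the explicit formula $Q_\mu(v)=\int_Y \psi^2\, a\nabla_y w\cdot\nabla_y w\,dy\ge 0$. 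Your approach is more elementary and yields a quantitative identity rather than a bare inequality; the paper's approach avoids the need to justify division by $\psi$ and the admissibility of $\psi w^2$ as a test function, at the cost of invoking Krein--Rutman-type information about the shifted operator. Both are standard and either would be acceptable here; your care in noting the positive lower bound on $\psi$ (or the $\psi+\delta$ regularization as a fallback) adequately closes the only potential gap.
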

\begin{proof}
If $u>0$ is an eigenfunction corresponding to $\mu$ of \eqref{eq:cell-problem}, then $u$ is also an eigenfunction corresponding to the eigenvalue $r = k$ of the operator
\begin{align}
\label{spctrl_cell_prob_shifted}
       T_k \psi = T_0 \psi + k\psi :=  \mathcal A_y \psi - \mu \rho \psi + k \psi &= r \psi \quad \mbox{in } Y,\\ 
        \mathcal B_y \psi = a \nabla_{y} \psi\cdot \nu
 &= 0 \quad \mbox{on } \Sigma,\nonumber\\
 y_1 \mapsto \psi(y_1, y') \, &\mbox{ is 1-periodic}. \nonumber
\end{align}
The spectrum of \eqref{spctrl_cell_prob_shifted} is discrete for sufficiently large $k$ and consists of a countable number of eigenvalues. The first eigenvalue $r_1$ is simple, and the corresponding eigenfunction $\psi_1$ can be chosen positive. 
Clearly, $(r_{i}, \psi_{i})$ is an eigenpair of $T_{k}$ \eqref{spctrl_cell_prob_shifted} if and only if $(r_i-k, \psi_{i})$ is an  eigenpair of $T_0$.
Because the eigenfunction $u$ is positive,
\begin{align*}
    (u , \psi_{1} )_{L^{2}(Y)} = \int_{Y} u \psi_{1} \, dx > 0.
\end{align*} 
As $u$ is an eigenfunction for some eigenvalue in the spectrum of $T_{k}$, $u$ must be an eigenfunction corresponding to $r_{1}$ due to the fact that the eigenfunctions corresponding to distinct eigenvalues are orthogonal. Then $u \in \text{Span}(\psi_{1})$, as $r_{1}$ is simple. As $T_{k} u = k u$ we have $r_{1} = k$. Furthermore, $(r_{1} - k)=0$ is a simple eigenvalue for $T_{0}$, and the corresponding eigenfunction $\psi_{1}$ does not change sign.\\
Since $T_{k}v = T_{0}v + k\,v$, then for any test function $v$, {by the spectral theorem},
\begin{align*}
   (T_{0} v,v)_{L^{2}(Y)} + k \| v \|^{2}_{L^{2}(Y)} = (T_{k} v, v)_{L^{2}(Y)} &\geq k \|v\|^{2}_{L^{2}(Y)}.
\end{align*}
Since $Q_{\mu} (v) = (T_{0} v,v)_{L^{2}(Y)}$, we have
$Q_{\mu} (v) \geq 0$ for all $v \in D(\mathcal A_y)$.
\end{proof}
The proof of the existence of a positive principal eigenvalue with a non-constant eigenfunction relies on the following statement (see Lemma 3.9 in \cite{brown1980existence}).
\begin{lemma}
\label{lm:estimate}
Let the hypotheses (H1)-(H5) hold.
If $\langle\rho(x_1,\cdot\rangle)=\int_Y \rho \, dy < 0$, there exist $\delta > 0, \gamma > 0$ such that for all $ \psi \in D(\mathcal A_y)$ satisfying 
\begin{align}
\label{eq:condition}
\int_{Y} \rho \psi^{2} \, dy > - \gamma \int_{Y} \psi^{2} \, dy,
\end{align}
we have
\begin{align}
\label{eq:inequality}
    \int_{Y} \psi^{2} \, dy \le \frac{1}{\delta}\int_{Y} |\nabla_{y} \psi|^2\, dy.
\end{align}
\end{lemma}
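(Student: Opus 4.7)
The plan is to prove this by contradiction, exploiting that the Poincaré inequality $\|\psi\|_{L^2(Y)}^2 \le C \|\nabla_y \psi\|_{L^2(Y)}^2$ fails only for functions close to a nonzero constant, and that such near-constant functions are ruled out by the constraint \eqref{eq:condition} whenever $\int_Y \rho\,dy<0$.

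Suppose no such $\delta,\gamma$ exist. Then for every $n \in \mathbb N$ one can find $\psi_n \in D(\mathcal A_y)$ satisfying
\begin{equation*}
\int_Y \rho\, \psi_n^2\, dy > -\frac{1}{n}\int_Y \psi_n^2\, dy, \qquad \int_Y \psi_n^2\, dy > n \int_Y |\nabla_y \psi_n|^2\, dy.
\end{equation*}
After normalizing so that $\|\psi_n\|_{L^2(Y)} = 1$, the second inequality yields $\|\nabla_y \psi_n\|_{L^2(Y)}^2 < 1/n$, so $\{\psi_n\}$ is bounded in $H^1(Y)$. By compact embedding, a subsequence (not relabeled) converges weakly in $H^1(Y)$ and strongly in $L^2(Y)$ to some $\psi_\infty \in H^1(Y)$, with $\|\psi_\infty\|_{L^2(Y)} = 1$.

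Next I would use weak lower semicontinuity of the Dirichlet energy:
\begin{equation*}
\|\nabla_y \psi_\infty\|_{L^2(Y)}^2 \le \liminf_{n\to\infty} \|\nabla_y \psi_n\|_{L^2(Y)}^2 = 0,
\end{equation*}
so $\psi_\infty$ is a constant, and since $\|\psi_\infty\|_{L^2(Y)}=1$ the constant is $\pm |Y|^{-1/2} \ne 0$. By (H1) the weight $\rho(x_1,\cdot)$ is bounded, so the strong $L^2$ convergence of $\psi_n$ gives $\int_Y \rho\,\psi_n^2\,dy \to \int_Y \rho\,\psi_\infty^2\,dy = |Y|^{-1}\int_Y \rho\,dy$. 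Passing to the limit in the first displayed inequality (whose right-hand side tends to $0$) we obtain
\begin{equation*}
\frac{1}{|Y|}\int_Y \rho\, dy \ge 0,
\end{equation*}
contradicting the hypothesis $\int_Y \rho\,dy<0$.

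There is no serious obstacle here; the argument is a standard contradiction-plus-compactness proof. The only point requiring mild care is the passage to the limit in $\int_Y \rho \psi_n^2\,dy$, which is justified by the $L^\infty$ regularity of $\rho$ coming from (H1) and the strong $L^2$ convergence of $\psi_n$. Note also that the constants $\delta,\gamma$ produced depend on $x_1$ through $\rho(x_1,\cdot)$; uniformity in $x_1 \in [-1,1]$, if needed later, follows from the continuous dependence of $\rho$ on $x_1$ in (H1) together with compactness of $[-1,1]$.
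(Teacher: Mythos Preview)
Your proof is correct. The paper does not actually prove this lemma; it simply states it and refers the reader to Lemma~3.9 in \cite{brown1980existence}. Your contradiction-plus-compactness argument is the standard proof of such a statement and is presumably close to what appears in that reference. The one step worth writing out explicitly is the convergence $\int_Y \rho\,\psi_n^2\,dy \to \int_Y \rho\,\psi_\infty^2\,dy$: since $\psi_n \to \psi_\infty$ strongly in $L^2(Y)$ and $\|\psi_n+\psi_\infty\|_{L^2(Y)}$ is bounded, the identity $\psi_n^2-\psi_\infty^2=(\psi_n-\psi_\infty)(\psi_n+\psi_\infty)$ and Cauchy--Schwarz give $\psi_n^2 \to \psi_\infty^2$ in $L^1(Y)$, which together with $\rho \in L^\infty(Y)$ yields the claim. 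Your remark on uniformity in $x_1$ is also to the point, since the paper later uses the inequality with constants that must not degenerate as $x_1$ varies.
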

It is clear that for a constant non-zero $\psi$, the Friedrichs inequality \eqref{eq:inequality} is not satisfied. However, condition \eqref{eq:condition} does not allow constant $\psi$ because of the negative local average of $\rho$. 
The existence of a nontrivial principal eigenvalue is ensured by the following lemma. 
\begin{lemma}
\label{lm:existence-mu}
Let hypotheses (H1)--(H4) hold.
    \begin{itemize}
    \item[(a)] If $\int_Y \rho \, dy \ge 0$, then $\mu=0$ is the only nonnegative eigenvalue for which the corresponding eigenfunction does not vanish. 
    \item[(b)] If $\int_Y \rho \, dy<0$, there exists a unique positive principal eigenvalue for which the corresponding eigenfunction does not vanish in $Y$, and
    \begin{align}
    \label{eq:mu1}
    \mu_1(x_1) = \inf_{\substack{v \in D(\mathcal A_y) \\ (\rho v , v)_{L^{2}(Y)}> 0}} \frac{\int_Y a(x_1,y) \nabla_{y} \psi \cdot \nabla_{y} \psi \, dy}{\int_Y \rho(x_1,y) \psi^2\, dy},
    \end{align}
    where 
    $$D(\mathcal A_y) = \{ v \in H^{2}(Y): a\nabla_y v\cdot \nu\big|_{\Sigma} =0, \, y_{1} \mapsto v(x_{1},y_{1},y') \text{ is } 1\text{-periodic}\}.
    $$ 
The eigenfunction $\Psi$ corresponding to $\mu_1$ can be chosen positive and normalized by $(\rho \Psi, \Psi)_{L^2(Y)}=1$.
\end{itemize}
\end{lemma}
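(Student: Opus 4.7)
I split the proof into (a) a reciprocal-test argument and (b) a direct method of the calculus of variations relying on Lemma \ref{lm:estimate}, followed by a strong maximum principle for positivity and a Picone identity for uniqueness. For \textbf{(a)}, I assume $\mu\ge 0$ admits a nonvanishing eigenfunction; taking it positive without loss of generality and noting that Schauder regularity from (H1)--(H3) gives $\psi\in C^{1,\alpha}(\overline{Y})$ bounded below by a positive constant, the function $1/\psi$ is admissible. Multiplying \eqref{eq:cell-problem} by $1/\psi$ and integrating by parts over $Y$, the integrals over $\Sigma$ vanish by the Neumann condition and the integrals on $\{y_1=0\}$ and $\{y_1=1\}$ cancel by periodicity, yielding
\[
-\int_Y \frac{a(x_1,y)\,\nabla_y\psi\cdot\nabla_y\psi}{\psi^2}\,dy \;=\; \mu\int_Y \rho(x_1,y)\,dy.
\]
By (H3) the left-hand side is nonpositive and vanishes only if $\psi$ is constant, in which case the PDE together with (H4) forces $\mu=0$. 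Hence any strictly positive $\mu$ with a nonvanishing eigenfunction forces $\int_Y \rho\,dy < 0$, which is the contrapositive of (a).

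For \textbf{(b)}, I apply the direct method to the Rayleigh quotient \eqref{eq:mu1} on $V:=\{v\in H^1(Y):v\text{ is }1\text{-periodic in }y_1\}$; the constraint set $\{v\in V:(\rho v,v)_{L^2(Y)}>0\}$ is non-empty by (H4). Let $v_n$ be a minimizing sequence with $\int_Y \rho v_n^2\,dy = 1$. This normalization trivially satisfies \eqref{eq:condition}, so Lemma \ref{lm:estimate} yields $\|v_n\|_{L^2(Y)}^2\le \delta^{-1}\|\nabla v_n\|_{L^2(Y)}^2$, and together with the bounded Dirichlet energy this gives $\|v_n\|_{H^1(Y)}\le C$. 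Extract a subsequence with $v_n\rightharpoonup v$ weakly in $H^1$ and $v_n\to v$ strongly in $L^2$ via Rellich--Kondrachov; since $\rho\in L^\infty$ the normalization passes, and weak lower semicontinuity of the quadratic form in $a$ shows $v$ attains the infimum $\mu_1$. Strict positivity $\mu_1>0$ follows because otherwise $\nabla v = 0$ a.e., so $v$ is a constant $c$, and then $\int_Y\rho v^2\,dy = c^2\int_Y\rho\,dy < 0$ by (H5), contradicting the normalization.

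To produce the positive eigenfunction, I replace the minimizer by $|v|$: since $\nabla|v|=\mathrm{sgn}(v)\nabla v$ a.e., both numerator and denominator of the Rayleigh quotient are invariant, so $\Psi:=|v|$ is again a minimizer and satisfies \eqref{eq:cell-problem} weakly with $\mu=\mu_1$. Elliptic regularity from (H1) places $\Psi\in C^{1,\alpha}(\overline{Y})\cap D(\mathcal A_y)$, and the strong maximum principle (combined with Hopf's lemma on $\Sigma$ and periodicity in $y_1$) upgrades $\Psi\ge 0$ to $\Psi>0$ on $\overline{Y}$. For uniqueness, suppose $(\mu_1',\Psi')$ is another positive principal pair with $\mu_1\le\mu_1'$. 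Testing the $\Psi$-equation against $\Psi$ and the $\Psi'$-equation against $\Psi^2/\Psi'$ (admissible since $\Psi'$ is bounded away from $0$), and subtracting, the Picone identity gives
\[
0 \;\le\; \int_Y a\bigl(\nabla\Psi-\tfrac{\Psi}{\Psi'}\nabla\Psi'\bigr)\cdot\bigl(\nabla\Psi-\tfrac{\Psi}{\Psi'}\nabla\Psi'\bigr)\,dy \;=\; (\mu_1-\mu_1')\int_Y \rho\,\Psi^2\,dy \;\le\; 0,
\]
where $\int_Y\rho\Psi^2\,dy > 0$ comes from testing the $\Psi$-equation against $\Psi$ (both $\mu_1$ and the Dirichlet energy are positive). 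All three quantities are therefore zero, forcing $\mu_1=\mu_1'$ and $\Psi/\Psi'\equiv$ constant; the normalization $(\rho\Psi,\Psi)_{L^2(Y)}=1$ then fixes the scale.

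The main obstacle is the coercivity of the minimizing sequence: the indefiniteness of $\rho$ would in principle allow $v_n$ to concentrate on $\{\rho>0\}$ with unbounded $L^2$-norm, preventing any $H^1$-bound and therefore derailing the whole direct method. Lemma \ref{lm:estimate} is engineered precisely to preclude this scenario, delivering a Friedrichs-type inequality on the constraint set whose proof uses (H5) in an essential way. Once this coercivity is in hand, the remaining steps---weak compactness, Rellich embedding, strong maximum principle, and the Picone identity---are standard; the only subtlety worth verifying is the compatibility of Hopf's lemma with the identification of the faces $\{y_1=0\}$ and $\{y_1=1\}$, which is immediate by viewing $Y$ as a closed manifold in the $y_1$-direction.
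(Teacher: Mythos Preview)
Your proof is correct and takes a genuinely different route from the paper's.

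For part (a), the paper relies on Lemma~\ref{positive-eignfn_condition} (that a positive eigenfunction at $\mu$ forces $Q_\mu(v)\ge 0$ for all $v$) and then constructs test functions $v=1$ or $v=1+sw$ making $Q_\mu(v)<0$ when $\int_Y\rho\ge 0$. Your reciprocal test with $1/\psi$ is more direct: one integration by parts immediately yields the sign obstruction $\mu\int_Y\rho\le 0$, and the strict inequality when $\psi$ is nonconstant handles the borderline case $\int_Y\rho=0$. This bypasses Lemma~\ref{positive-eignfn_condition} entirely.

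For part (b), the paper introduces the shifted operator $T_0=\mathcal A_y-\mu_1\rho$, shows its bottom eigenvalue is $0$, and invokes Krein--Rutman for simplicity and positivity; uniqueness is then split into the regimes $0<\mu<\mu_1$ (where $Q_\mu$ is shown to be coercive, so no eigenvalue exists at all) and $\mu>\mu_1$ (where $Q_\mu$ takes negative values, contradicting Lemma~\ref{positive-eignfn_condition}). You instead run the direct method on the Rayleigh quotient, using Lemma~\ref{lm:estimate} to secure the $H^1$-bound on a minimizing sequence, replace the minimizer by its modulus, and upgrade to strict positivity via the strong maximum principle; uniqueness comes from Picone's identity. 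Your argument is more self-contained---it uses only Lemma~\ref{lm:estimate} and avoids both Lemma~\ref{positive-eignfn_condition} and Krein--Rutman---while the paper's approach yields the slightly stronger byproduct that there are \emph{no} eigenvalues at all in $(0,\mu_1)$, not merely no principal ones. Both are complete for what the lemma actually asserts.
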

\begin{proof}
In the current proof, $x_1$ is a fixed parameter, and we omit it for brevity.\\
(a) 
Let $\mu_1$ be defined by \eqref{eq:mu1}. Clearly, $\mu_{1} \ge 0$.
Let us prove that $\mu_{1} = 0$ by showing that for any $\mu>0$ there exists $v \in D(\mathcal A_y)$ such that $Q_{\mu}(v)<0$, when $\int_{Y}\rho\,  dy \geq 0$. This will contradict Lemma \eqref{positive-eignfn_condition}.

Suppose first that $\int_{Y} \rho \, dy > 0$.
Then we can take $v=1$ to get
\begin{align*}
Q_{\mu}(v) &= -\mu \int_{Y} \rho \, dy < 0.
\end{align*} 
If $\int_{Y} \rho \, dy = 0$,
we choose any $w \in D(\mathcal A_y)$ such that $\int_{Y} \rho w \, dy > 0$. For a sufficiently small $s>0$ we have 
\begin{align*}
    Q_{\mu}(1 + sw) 
    = s^{2} Q_{\mu}(w) - 2 s \mu \int_{Y} \rho w \, dy < 0.
\end{align*}
So $\mu_{1}$ cannot be positive, and therefore, $\mu_{1} = 0$ is the only principal eigenvalue in the case when $\int_Y \rho(x_1, y)dy \ge 0$.\\

\noindent
(b) $\mu_{1}$ given by \eqref{eq:mu1} is positive.
Indeed, by Lemma \ref{lm:estimate} and  (H3), we have
\begin{align*}
    \mu_{1} = \inf_{\substack{\psi \in D(\mathcal A_y) \\ (\rho \psi, \psi)_{L^{2}(Y)}> 0}}  \frac{\int_{Y} a(x_{1},y) \nabla_{y} \psi \cdot \nabla_{y} \psi \, dy}{\int_{Y} \rho \psi^{2} \, dy} \geq \frac{\delta \Lambda}{\|\rho\|_{L^{\infty}(Y)}} > 0.
\end{align*}
Next, we prove that $\mu_{1}$ is a positive eigenvalue of \eqref{eq:cell-problem}.
Consider the shifted eigenvalue problem
\begin{align}
\label{new_spctrl_prob}
\begin{cases}
    T_0 \psi = \mathcal A_y \psi - \mu_{1} \rho \psi = \lambda \psi \quad \mbox{in}~ Y,\\
    \mathcal B_y \psi = 0 \quad \mbox{on}~ \Sigma, \nonumber\\
    y_1\mapsto \psi \,\,\mbox{1-periodic}.
\end{cases}
 \end{align}
It is clear from \eqref{new_spctrl_prob} that $\mu_{1}$ is an eigenvalue of \eqref{eq:cell-problem} if and only if $\lambda=0$ is an eigenvalue of $T_{0}$.
By the definition of $\mu_1$, $(T_{0} v, v)_{L^{2}(Y)} \geq 0$ for all $v \in D(\mathcal A_y)$.
The least eigenvalue of $T_{0}$ is given by
\begin{align*}
    \alpha_{1} &= \inf \{ (\mathcal A_y v, v)_{L^2(Y)} - \mu_{1}(\rho v, v)_{L^2(Y)}: v \in D(\mathcal A_y), (\rho v,v)_{L^2(Y)}=1 \}\\
    &= \inf \{ Q_{\mu_{1}}(v) : v \in D(\mathcal A_y), (\rho v,v)_{L^2(Y)}=1\}.
\end{align*}
Then $\alpha_{1} \geq 0$ by the defintion of $\mu_1$. Moreover, 
there exists a sequence $\{v_{n}\} \subset D(\mathcal A_y)$ such that $\int_{Y} \rho v_{n}^{2} \, dy = 1$ and 
\begin{align*}
\displaystyle\lim_{n \rightarrow \infty} \frac{(\mathcal A_y v_{n}, v_{n})_{L^2(Y)}}{(\rho v_{n}, v_{n})_{L^{2}(Y)}} = \lim_{n \rightarrow \infty} (\mathcal A_y v_{n}, v_{n})_{L^{2}(Y)} = \mu_{1}.
\end{align*}
Then $ \displaystyle\lim_{n \rightarrow \infty} Q_{\mu_{1}}(v_{n}) = \mu_{1} - \mu_{1} = 0$, and  
\begin{align*}
    \alpha_{1} = \displaystyle
    \inf_{\substack{v \in D(\mathcal A_y) \\ (\rho v, v)_{L^{2}(Y)}> 0}} Q_{\mu_{1}}(v) \leq \liminf_{n \rightarrow \infty} Q_{\mu_{1}}(v_{n}) = 0.
\end{align*}
This yields that the smallest eigenvalue of $T_0$ is $\alpha_{1} = 0$. By the Krein-Rutman theorem, it is simple and the corresponding eigenfunction can be chosen positive in $Y$.
This shows that $\mu_{1}>0$ is an eigenvalue of \eqref{eq:cell-problem} that is simple, and the corresponding eigenfunction can be chosen positive. It is left to show that there are no other principal eigenvalues.

Take $\mu > 0$ such that $\mu \neq \mu_{1}$. Let us show that $\mu$ is not an eigenvalue with a non-negative eigenfunction.

Suppose first that $0 < \mu < \mu_{1}$. 
We will show that there exists $\alpha > 0$, depending on $\mu$, such that $Q_{\mu}(v) \geq \alpha \|v\|^{2}_{L^{2}(Y)}$ for all $v \in D(\mathcal A_y)$.
For $\mu = (1 - s) \mu_{1}$, $0 <s< 1$, we have
\begin{align*}
    Q_{\mu}(v) &= (\mathcal A_y v, v)_{L^{2}(Y)} - \mu (\rho v, v)_{L^{2}(Y)}\\
    &= \frac{\mu}{\mu_{1}} Q_{\mu_{1}}(v) + (1 - \frac{\mu}{\mu_{1}}) (\mathcal A_y v, v)_{L^{2}(Y)} \geq s ~ (\mathcal A_y v, v)_{L^{2}(Y)}. 
\end{align*}
Let $\delta, \gamma > 0$ be such as in Lemma \ref{lm:estimate}. Then for any $v \in D(\mathcal A_y)$ such that $ \int_{Y} \rho v^{2} \, dy > - \gamma \int_{Y} v^{2} \, dy$, we have
\begin{align*}
    Q_{\mu}(v) 
     & \geq s \delta \Lambda \|v\|^{2}_{L^{2}(Y)}.
\end{align*}
If $ \int_{Y} \rho v^{2} \, dy \leq - \gamma \int_{Y} v^{2} \, dy$, then $Q_\mu(v)$ is estimated in the following way:
\begin{align*}
    Q_{\mu}(v) 
    &\geq \Lambda \|\nabla_{y} v\|^{2}_{L^{2}(Y)} - \mu (\rho v, v)_{L^{2}(Y)}\\
    &\geq - \mu (\rho v, v)_{L^{2}(Y)} \geq \mu \gamma \|v\|^{2}_{L^{2}(Y).}
\end{align*}
Therefore, there exists no non-trivial $v \in D(\mathcal A_y)$ for $\mu < \mu_{1}$ such that $Q_{\mu}(v) = 0$. So $\mu$ is not an eigenvalue of \eqref{eq:cell-problem}.

Next, suppose that $\mu$ is such that $0< \mu_{1} < \mu$. Then $Q_{\mu}(v) \geq 0$ for all $v \in D(\mathcal A_y)$. Indeed, there exists $v$ such that
\begin{align*}
    &\frac{(\mathcal A_y v,v)_{L^2(Y)}}{(\rho v, v)_{L^2(Y)}} < \mu \Rightarrow
    Q_{\mu}(v) = \int_{Y} a \nabla_{y}v \cdot \nabla_{y}v \, dy - \mu \int_{Y} \rho v^{2} \, dy < 0.
\end{align*}
So $\mu$ cannot be an eigenvalue with a positive eigenfunction due to Lemma \ref{positive-eignfn_condition}.

\end{proof}

{The next lemma characterizes the regularity of $(\mu(x_1), \Psi(x_1, y))$. One of the key arguments we use is the positivity of the eigenfunction, and, thus, the proof is valid only for the principal eigenvalue.}
\begin{lemma}\label{lemm:Reqularity_mu}
Let $\mu=\mu(x_1)$ be the principle non-zero eigenvalue of \eqref{eq:cell-problem} with the corresponding eigenfunction $\Psi=\Psi(x_1, y)$. Under assumptions (H1)--(H4), we have 
\begin{align}
\mu\in C^{1,\alpha}([-1,1]), \quad 
\Psi\in C^{1,\alpha}([-1, 1]; C^{1, \alpha}(\overline Y ) \cap H^1(Y)).
\end{align}
\end{lemma}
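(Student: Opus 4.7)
The plan is to apply the Implicit Function Theorem (IFT) to the parametrized eigenvalue problem, exploiting the simplicity of the principal eigenvalue $\mu(x_1^0)$ established in Lemma \ref{lm:existence-mu}, and then to upgrade spatial regularity via Schauder estimates. Fix $x_1^0\in[-1,1]$ and let $(\mu_0,\Psi_0)$ denote the principal eigenpair at $x_1^0$, with $\Psi_0>0$ on $\overline{Y}$ and $(\rho(x_1^0,\cdot)\Psi_0,\Psi_0)_{L^2(Y)}=1$. Let $V=\{v\in H^1(Y):v\text{ is }1\text{-periodic in }y_1\}$ and define
\[
F:[-1,1]\times\mathbb{R}\times V\to V^*\times\mathbb{R},\qquad F(x_1,\mu,\Psi)=\bigl(A(x_1)\Psi-\mu B(x_1)\Psi,\ (B(x_1)\Psi,\Psi)_{L^2(Y)}-1\bigr),
\]
where $A(x_1),B(x_1):V\to V^*$ are induced by the bilinear forms $\int_Y a(x_1,y)\nabla u\cdot\nabla v\,dy$ and $\int_Y \rho(x_1,y)uv\,dy$. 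Hypothesis (H1) guarantees that $F$ is of class $C^{1,\alpha}$ in $x_1$ and analytic in $(\mu,\Psi)$, and $F(x_1^0,\mu_0,\Psi_0)=0$.

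The main step is to verify that
\[
D_{(\mu,\Psi)}F(x_1^0,\mu_0,\Psi_0):\ (\delta\mu,\delta\Psi)\longmapsto\bigl((A_0-\mu_0 B_0)\delta\Psi-\delta\mu\,B_0\Psi_0,\ 2(B_0\Psi_0,\delta\Psi)_{L^2(Y)}\bigr)
\]
is an isomorphism from $\mathbb{R}\times V$ onto $V^*\times\mathbb{R}$. Since $A_0-\mu_0 B_0:V\to V^*$ differs from the coercive map $A_0+I$ by a compact operator, it is Fredholm of index zero. Simplicity of $\mu_0$ (Lemma \ref{lm:existence-mu}) identifies its kernel with $\mathrm{span}(\Psi_0)$, and the symmetry of its bilinear form identifies its range with the annihilator of $\Psi_0$. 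Given $(g,\eta)\in V^*\times\mathbb{R}$, pairing the first component with $\Psi_0$ and using $(B_0\Psi_0,\Psi_0)_{L^2}=1$ uniquely determines $\delta\mu=-\langle g,\Psi_0\rangle$; the Fredholm alternative then yields $\delta\Psi$ up to a multiple of $\Psi_0$, which is fixed uniquely by the normalization equation. Consequently, IFT supplies a $C^{1,\alpha}$ branch $x_1\mapsto(\mu(x_1),\Psi(x_1,\cdot))\in\mathbb{R}\times V$ through $(x_1^0,\mu_0,\Psi_0)$, and uniqueness of the principal eigenpair identifies it with the principal branch. Covering $[-1,1]$ yields $\mu\in C^{1,\alpha}([-1,1])$ and $\Psi\in C^{1,\alpha}([-1,1];H^1(Y))$.

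To obtain $C^{1,\alpha}(\overline Y)$ regularity in $y$, note that for each $x_1$ the function $\Psi(x_1,\cdot)$ is a weak solution of the divergence-form equation $-\mathrm{div}_y(a\nabla_y\Psi)=\mu\rho\Psi$ with conormal Neumann data on $\Sigma$; De Giorgi--Nash--Moser yields $\Psi\in L^\infty$, and Schauder estimates for divergence-form operators with $C^{0,\alpha}$ coefficients give $\Psi(x_1,\cdot)\in C^{1,\alpha}(\overline Y)$ with bounds uniform in $x_1$. To upgrade the IFT conclusion from $H^1$-valued to $C^{1,\alpha}(\overline Y)$-valued regularity in $x_1$, one differentiates the equation formally: $\partial_{x_1}\Psi$ satisfies a divergence-form equation with source $\mu'\rho\Psi+\mu\partial_{x_1}\rho\,\Psi+\mathrm{div}_y(\partial_{x_1}a\cdot\nabla_y\Psi)$ and inhomogeneous conormal boundary data $-\partial_{x_1}a\cdot\nabla_y\Psi\cdot\nu$, both of class $C^{0,\alpha}$ in $y$; rerunning the Fredholm/Schauder argument (with the normalization fixing the kernel contribution) gives $\partial_{x_1}\Psi\in C^{0,\alpha}([-1,1];C^{1,\alpha}(\overline Y))$, completing the proof. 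The main obstacle is the invertibility of the linearization, where the simplicity of $\mu_0$, the symmetry of the bilinear form, and the normalization constraint must be combined into a coherent Fredholm argument.
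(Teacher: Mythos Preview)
Your argument is correct and takes a genuinely different route from the paper. You invoke the Implicit Function Theorem in Banach spaces, packaging the eigenvalue equation together with the normalization constraint into a single $C^{1,\alpha}$ map $F$, and then reduce invertibility of the linearization to a clean Fredholm alternative hinging on the simplicity of $\mu_0$. This yields the $C^{1,\alpha}$ dependence of $(\mu,\Psi)$ on $x_1$ (with values in $H^1$) in one stroke; the upgrade to $C^{1,\alpha}(\overline Y)$ in $y$ then follows from Schauder estimates applied to the equation and to its $x_1$-differentiated version.

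By contrast, the paper proceeds more constructively: it cites abstract Fr\'echet-differentiability results for bilinear forms and simple eigenvalues (Theorems~2.4.1--2.5.2 in \cite{komkov1986design}) to obtain differentiability of $\mu$ and derive the explicit formula
\[
\mu'(x_1)=\int_Y\partial_{x_1}a\,\nabla_y\Psi\cdot\nabla_y\Psi\,dy-\mu(x_1)\int_Y\partial_{x_1}\rho\,\Psi^2\,dy,
\]
then establishes Lipschitz continuity of $\mu$ by a direct difference estimate, writes down the boundary value problem for $\partial_{x_1}\Psi$, checks its compatibility condition via the formula above, and finally bootstraps H\"older continuity through elliptic regularity. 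Your IFT argument is more self-contained and economical; the paper's approach, though longer, has the advantage of producing the explicit expression for $\mu'(x_1)$, which is relevant when one later verifies or exploits hypothesis~(H6) on the minimum of $\mu$.
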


\begin{proof}
{The classical elliptic regularity of the solutions to elliptic equations ensures that $\Psi(x_1, \cdot)\in H^1(Y) \cap C^{1,\alpha}(\overline Y)$, for each $x_1 \in [-1,1]$ (see Corollary 1.4 in \cite{vita2022boundary}).}
{Under the coercivity and boundedness conditions on the coefficient $a(x_1,y)$, the bilinear forms $a_{x_1}(v,v)=(\mathcal A_y v, v)_{L^2(Y)}$ and $b_{x_1}(v,\varphi)=(\rho(x_1,\cdot)v, \varphi)_{L^2(Y)}$ are Fr\'{e}chet differentiable with respect to $x_1$} (see Theorem 2.4.1 in \cite{komkov1986design}), and the corresponding differentials are
\begin{align*}
a_{\delta x_1}'(v, \varphi)&= \delta x_1 \int_Y \partial_{x_1}a(x_1, y)\nabla v\cdot \nabla \varphi\, dy,\\
b_{\delta x_1}'(v,\varphi)&= \delta x_1 \int_Y \partial_{x_1}\rho(x_1, y)\, v\, \varphi\, dy.
\end{align*}

From the coercivity and differentiability of the bilinear forms it follows that the operator $\mathcal A_y$ (given by its bilinear form) {with the domain $D(\mathcal A_y)$, which is dense in $L^2(Y)$, possesses an inverse $\mathcal A_y^{-1}$ }that is Fr\'echet differentiable (Theorem 2.4.2 in \cite{komkov1986design}). In addition, the operator $B_{x_1}$ defined by the bilinear form $(B_{x_1} v, \varphi)_{L^2(Y)}=(\rho(x_1,\cdot)v, \varphi)_{L^2(Y)}$ is bounded from $L^2(Y)$ into itself, and, thus, is Fr\'echet differentiable with respect to $x_1$. Thus, $\mathcal A_y^{-1} B_{x_1}$ is also Fr\'echet differentiable. The eigenvalue problem \eqref{eq:cell-problem} can be equivalently rewritten as
\begin{align}
\label{eq:equiv-eigenvalue}
\mathcal A_y^{-1} B_{x_1} \Psi = \frac{1}{\mu(x_1)}\Psi.
\end{align}
Then $1/\mu(x_1)$ is continuous with respect to $x_1$, and, furthermore, by Theorem 2.5.2 in \cite{komkov1986design}, since the principal eigenvalue is simple, it is differentiable, and the derivative is given by
\begin{align}
\label{eq:m'}
\mu'(x_1)&= \int_Y \partial_{x_1} a(x_1,y)\nabla_y \Psi(x_1,y)\cdot \nabla_y \Psi(x_1,y)\, dy\\
&-\mu(x_1)\int_Y \partial_{x_1}\rho(x_1,y) \Psi(x_1,y)^2\, dy,\nonumber
\end{align}
where $\Psi$ is the corresponding eigenfunction normalized by $\int_Y \rho \Psi^2\,dy=1$.

Having proved that the eigenvalue $\mu(x_1)$ is differentiable with respect to $x_1$, we conclude that the corresponding eigenfunction is differentiable in $x_1$, as a solution to a boundary value problem with a given $\mu(x_1)$ (see Theorem 2.4.3 in \cite{komkov1986design}). 

Next, we prove the Lipschitz regularity of $\mu(x_1)$. Let us estimate $|\mu(\xi)-\mu(\eta)|$. 
For brevity, for two values $\xi, \eta \in [-1,1]$, we write
\begin{align}
\label{eq:L_xi}
L_{\xi}\Psi_{\xi} &:= -{\rm div}_y(a(\xi, y)\nabla_y \Psi(\xi, y))= \mu(\xi)\rho(\xi,y)\Psi(\xi, y)=:\mu_{\xi} \rho_{\xi} \Psi_{\xi},\\
\nonumber
L_{\eta}\Psi_{\eta} &:= -{\rm div}_y(a(\eta, y)\nabla_y \Psi(\eta, y))
= \mu(\eta)\rho(\eta,y)\Psi(\eta,y)
=:\mu_{\eta} \rho_{\eta} \Psi_{\eta}.
\end{align}
Then
\begin{align}
\label{eq:L_xi_Psi_eta}
L_\xi \Psi_\eta = (L_\xi-L_\eta) \Psi_\eta  + \mu_\eta \rho_\eta \Psi_\eta.
\end{align}
Taking the scalar product of \eqref{eq:L_xi_Psi_eta} with $\Psi_\xi$ in $L^2(Y)$, integrating by parts and using \eqref{eq:L_xi}, we obtain
\begin{align}
\label{eq:aux-1}
\mu_\xi(\rho_\eta \Psi_\eta, \Psi_\xi)_{L^2(Y)}
- \mu_\eta (\rho_\xi \Psi_\eta, \Psi_\xi)_{L^2(Y)}
=
((L_\xi-L_\eta) \Psi_\eta, \Psi_\xi )_{L^2(Y)}.
\end{align}
Using the linear approximations for $a_\eta, \rho_\eta$ and integrating by parts in \eqref{eq:aux-1}, we get
\begin{align}
\label{m_Holder-continuity}
(\mu_\xi-\mu_\eta) (\rho_\xi \Psi_\eta, \Psi_\xi)_{L^2(Y)}
&= 
(\partial_{x_1}a(\zeta, \cdot)(\eta - \xi) \nabla \Psi_\eta, \nabla \Psi_\xi)_{L^2(Y)} \\ 
& + (\partial_{x_1}a(\zeta, \cdot)(\eta - \xi) \nabla \Psi_\eta\cdot n, \Psi_\xi)_{L^2(\Sigma)} \nonumber\\
&- \mu_\eta(\partial_{x_1}\rho(\zeta, \cdot)(\eta - \xi) \Psi_\eta, \Psi_\xi)_{L^2(Y)}.\nonumber
\end{align}
Since $a_{ij}, \rho\in C^{1, \alpha}([-1,1]; C^{0,\alpha}(\overline Y))$, the partial derivatives $\partial_{x_1} a_{ij}, \partial_{x_1}\rho(x_1, y)$ are H\"older continuous on $[-1,1]$ and, thus, uniformly bounded. Moreover, {by the differentiability of $\mu(x_1)$ and $\Psi(x_1, y)$ in $x_1$, the normalization condition $\int_Y \rho \Psi^2\, dy=1$ implies then that for $|\xi-\eta|$ small enough, $\int_Y \rho_\xi \Psi_\xi \Psi_\eta\, dy \ge 1/2$.}
Thus, $\mu$ is Lipschitz continuous:
\begin{align}
\label{eq:est-mu-1}
|\mu(\xi)-\mu(\eta)|\le C |\xi-\eta|.
\end{align}
Differentiating \eqref{eq:cell-problem} with respect to $x_1$, we obtain:
\begin{align}
-&{\rm div}_y(a(x_1,y)\nabla_y (\partial_{x_1}\Psi(x_1, y))) - \mu(x_1) \rho(x_1,y) (\partial_{x_1}\Psi(x_1, y)) \nonumber\\
&\quad= {\rm div}_y (\partial_{x_1}a(x_1,y) \nabla_y \Psi(x_1,y))
+ \mu'(x_1) \rho(x_1, y) \Psi(x_1,y) \nonumber\\
&\qquad+\mu(x_1)\partial_{x_1} \rho(x_1,y) \Psi(x_1,y), \quad &y\in Y, \label{eq:aux-2}\\
&a(x_1, y)\nabla_y (\partial_{x_1}\Psi(x_1, y))\cdot n 
= -\partial_{x_1}a(x_1,y)\nabla \Psi(x_1,y)\cdot n, \quad &y\in \Sigma,\nonumber\\
&y_1 \mapsto \Psi(x_1, y_1, y') \,\mbox{is 1-periodic}.\nonumber
\end{align}
The eigenfunction $\Psi$ is normalized by $\int_Y \rho \Psi^2\, dy =1$ which implies 
\begin{align*}
&\partial_{x_1}\int_Y \rho(x_1,y) \Psi(x_1,y)^2\, dy\\
&= \int_Y \partial_{x_1}\rho(x_1,y) \Psi(x_1,y)^2\, dy
+ 2 \int_Y \rho(x_1,y) \partial_{x_1}\Psi(x_1,y) \Psi(x_1,y)\, dy=0.
\end{align*}
{Further, the compatibility condition for \eqref{eq:aux-2} is satisfied thanks to \eqref{eq:m'}.}
{Due to the elliptic regularity, $\partial_{x_1} \Psi(x_1, \cdot) \in C^{0,\alpha}(\bar Y)\cap H^1(Y)$.}
To prove the H\"older continuity in $x_1$, we estimate the $H^1$-norm of the difference $\Psi(\xi,\cdot)-\Psi(\eta,\cdot)$.
\begin{align*}
L_\xi(\Psi_\xi-\Psi_\eta) &= (L_\eta-L_\xi)\Psi_\eta + \mu_\xi \rho_\xi \Psi_\xi - \mu_\eta\rho_\eta\Psi_\eta,\\
(L_\xi(\Psi_\xi-\Psi_\eta), (\Psi_\xi-\Psi_\eta))_{L^2(Y)} &= ((L_\eta-L_\xi)\Psi_\eta, (\Psi_\xi-\Psi_\eta))_{L^2(Y)}\\
&+ \mu_\xi(\rho_\xi \Psi_\xi, (\Psi_\xi-\Psi_\eta))_{L^2(Y)}\\
&- \mu_\eta(\rho_\eta \Psi_\eta, (\Psi_\xi-\Psi_\eta))_{L^2(Y)}.
\end{align*}
{By the hypotheses (H1)--(H3), the differentiability of the eigenpair proved above, and Lemma \ref{lm:estimate}, we obtain
\begin{align}
\|\Psi_\xi-\Psi_\eta\|_{H^1(Y)}\le C|\xi-\eta|^\alpha.
\end{align}
Combining \eqref{eq:m'} and the above estimate yields that $\mu'(x_1)$ is H\"older continuous. For a given $\mu(x_1)$, $\partial_{x_1} \Psi(x_1,y)$ solves \eqref{eq:aux-2} and thus, by elliptic regularity, belongs to $H^1(Y)\cup C^{0,\alpha}(\bar Y)$. We conclude that $\Psi \in C^{1,\alpha}([-1,1]; C^{0,\alpha}(\bar Y)\cap H^1(Y))$.}

\end{proof}

\section{Proof of Theorem \ref{th:main}}
\label{sec: proof main theorem}
In this section we prove the main result of the paper Theorem \ref{th:main}. First, we perform the factorization of \eqref{orig-prob} with the positive eigenfunction defined in Lemma \ref{lm:existence-mu}. After that we rescale the obtained problem in order to elliminate the singularity. Then the two-scale convergence technique in spaces with singular measure is used to derive the limit problem \eqref{eq:eff-prob}.  
\subsection{Factorization}
\label{sec factorization and estimates eigenvalue}
As before, we assume that $\langle \rho(x_1, \cdot)\rangle=\int_Y \rho(x_1, y)\, dy<0$. 
Using the factorization $u^{\ve}(x) = \Psi(x_1, \frac{x}{\ve}) v^\ve(x)$, multiplying \eqref{orig-prob} by $\Psi(x_1, \frac{x}{\ve})$, we obtain the following spectral problem for the new unknowns $(\nu^\ve, v^\ve)$:
\begin{align}
\label{eq:v-eps}
\begin{cases}
\displaystyle
-{\rm div}(a_\Psi^\ve \nabla v^\ve)
+ \frac{1}{\ve}C^\ve v^\ve 
+ \frac{\mu(x_1)-\mu(0)}{\ve^2} \rho_\Psi^\ve v^\ve = \nu^\ve \rho_\Psi^\ve v^\ve, \quad x\in \Omega_\ve,\\[2mm]
\displaystyle
a_\Psi^\ve \nabla v^\ve \cdot \nu
{+  (a^\ve \Psi^\ve \nabla_x \Psi(x_1, y)\big|_{y=x/\ve} \cdot n)\, v^\ve} 
=0, \quad x\in \Sigma_\ve,\\[2mm]
v^{\ve}(-1, x')=v^{\ve}(1, x')=0, \quad x'\in \ve Q,
\end{cases}
\end{align}
where we have denoted
\begin{align}
\label{eq:C-eps}
\nu^\ve &= \lambda^\ve - \frac{\mu(0)}{\ve^2},\\
\label{def:a^eps rho^eps}
a_\Psi^\ve(x) &= a(x_1, y)\, \Psi(x_1, y)^2\big|_{y=\frac{x}{\ve}},\quad \rho_\Psi^\ve = \rho(x_1, y)\, \Psi(x_1, y)^2\big|_{y=\frac{x}{\ve}},\\
\label{def:C^eps}
C^\ve(x)
&= -  \ve \Psi(x_1, y){\rm div}(a(x_1,y) \nabla_x \Psi(x_1,y))\big|_{y=\frac{x}{\ve}}\\
&-\Psi(x_1,y) {\rm div}_x(a(x_1,y) \nabla_y \Psi(x_1,y))\big|_{y=\frac{x}{\ve}}.\nonumber
\end{align}
{Note that the gradient $\nabla_x\Psi$ has only one non-trivial component, the first one $\partial_{x_1} \Psi$. If $a(x_1, y)$ is a scalar function, the second term in the lateral boundary condition in \eqref{eq:v-eps} vanishes.}
{The weak form of \eqref{eq:v-eps} reads
\begin{align}
\label{eq:weak-v_eps}
A_\Psi(v^\ve, \varphi)&\equiv\int_{\Omega_\ve} a_\Psi^\ve \nabla v^\ve \cdot \nabla \varphi\, dx \nonumber\\
&+\int_{\Omega_\ve} a(x_1,y)\nabla_x\Psi(x_1,y)\big|_{y=x/\ve} \cdot \nabla(\Psi^\ve v^\ve \varphi)\, dx \nonumber\\
&- \frac{1}{\ve}\int_{\Omega_\ve} {\rm div}_x(a(x_1,y) \nabla_y \Psi(x_1,y)) \Psi(x_1, y)\big|_{y=x/\ve}\, v^\ve\, \varphi(x)\, dx\\
&+ \int_{\Omega_\ve}\frac{\mu(x_1)-\mu(0)}{\ve^2} \rho_\Psi^\ve v^\ve\,\varphi dx= 
\nu^\ve \int_{\Omega_\ve} \rho_\Psi^\ve v^\ve \varphi\, dx,\nonumber
\end{align}
for any $\varphi\in H^1(\Omega_\ve)$, $\varphi(\pm 1, x')=0$.}

{By the minmax principle, the first positive eigenvalue of \eqref{eq:v-eps} is given by
\begin{align}
\label{eq:nu-variational}
\nu_1^{\ve, +}
= \min_{{(\rho_\Psi^\ve v, v)=1}}A_\Psi(v, v),
\end{align}
where the bilinear form $A_\Psi(v, \varphi)$ is defined in \eqref{eq:weak-v_eps}, and the minimum is taken over functions $v\in H^1(\Omega_\ve)$ such that $v(\pm 1, x')=0$.
To minimize $A_\Psi(v,v)$, one can see that $v$ should be localized, that is $v$ may be of the form $v(\frac{x_{1}}{\ve^\gamma})$ for $\gamma>0$, as shown in the proof below.

\begin{lemma}
Under the assumptions (\textbf{H1})-(\textbf{H6}), the eigenvalues $\nu_j^\ve$ of \eqref{eq:weak-v_eps} satisfy the following estimate:
\begin{align}
\label{eq:est-nu^eps}
    |\nu_j^\ve| =|\lambda_j^{\ve, +} - \frac{\mu(0)}{\ve^2}| \le \frac{C}{\ve}, \quad j\in \mathbf N.
\end{align}
\end{lemma}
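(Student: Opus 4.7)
The plan is to derive matching bounds $-C_j/\ve \le \nu_j^{\ve,+} \le C_j/\ve$ via the Courant--Fischer min--max characterization of the positive spectrum of the indefinite problem \eqref{eq:weak-v_eps},
\[
\nu_j^{\ve,+} = \inf_{V_j}\,\sup_{\substack{v\in V_j \\ (\rho_\Psi^\ve v,v)>0}} \frac{A_\Psi(v,v)}{(\rho_\Psi^\ve v,v)},
\]
where the infimum is over $j$-dimensional subspaces of admissible test functions vanishing on the bases.

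For the upper bound, I would construct the trial space $V_j^\ve = \mathrm{span}\{\varphi_k^\ve\}_{k=1}^j$ with rescaled Hermite-type functions $\varphi_k^\ve(x) = \chi_\ve(x_1)\,v_k(x_1/\sqrt{\ve})$, where $v_k$ is the $k$-th eigenfunction of the limit harmonic oscillator \eqref{eq:eff-prob} (Gaussian decay) and $\chi_\ve$ is a smooth cutoff equal to $1$ on $[-1/2,1/2]$ that enforces the Dirichlet condition at $x_1=\pm1$. Expanding $A_\Psi(\varphi_k^\ve,\varphi_k^\ve)$ term by term, changing variable $t=x_1/\sqrt{\ve}$, and using the mean-value property (Appendix \ref{Appendix B}), the gradient term contributes $\ve^{d-3/2}a^\eff|Q|\int (v_k')^2\,dt$; the Taylor expansion $\mu(x_1)-\mu(0) = \tfrac12 \mu''(0) x_1^2 + o(x_1^2)$ (justified by Lemma \ref{lemm:Reqularity_mu} and (H6)) gives the singular potential contribution $\ve^{d-3/2}\,\tfrac12\mu''(0)|Q|\int t^2 v_k^2\,dt$; the drift $\ve^{-1}C^\ve$ contributes at worst $O(\ve^{d-3/2})$; and the normalization $(\rho_\Psi^\ve \varphi_k^\ve,\varphi_k^\ve) \approx \ve^{d-1/2}|Q|\int v_k^2\,dt$ uses $\int_Y \rho\Psi^2\,dy = 1$ from Lemma \ref{lm:existence-mu}(b). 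Consequently the Rayleigh quotient is $O(\ve^{-1})$, giving $\nu_j^{\ve,+} \le C_j/\ve$.

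For the lower bound, I would exploit that by (H6) one has $\mu(x_1) \ge \mu(0)$ and that $\rho_\Psi^\ve$ has positive local mean (again $\int_Y \rho\Psi^2\,dy = 1$), so the singular potential $\ve^{-2}(\mu(x_1)-\mu(0))\rho_\Psi^\ve$ contributes non-negatively on average. It then remains to absorb the drift $\ve^{-1}(C^\ve v,v)_{L^2(\Omega_\ve)}$ into the non-negative gradient and potential parts. Using the elementary bound $|\ve^{-1}(C^\ve v,v)| \le \delta \|\sqrt{a_\Psi^\ve}\nabla v\|^2_{L^2} + C(\delta)\ve^{-1}\|v\|^2_{L^2}$ with a small $\delta>0$, followed by a local Friedrichs-type estimate in the spirit of Lemma \ref{lm:estimate} bounding $\|v\|^2_{L^2(\Omega_\ve)} \le C(\rho_\Psi^\ve v,v) + \delta\|\nabla v\|^2$ on the $\rho_\Psi^\ve$-positive subspace, I obtain $A_\Psi(v,v) \ge -C\ve^{-1}(\rho_\Psi^\ve v,v)$, whence $\nu_j^{\ve,+} \ge -C_j/\ve$. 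The hard part will be precisely this lower bound: the drift $C^\ve$ is of order $\ve^{-1}$ and in general has nonzero cell average, while the naive Poincar\'e inequality on the thin rod is too weak by a factor $\ve^{-1}$; the remedy is to apply the sign-changing-weight Friedrichs estimate of Lemma \ref{lm:estimate} cell by cell and patch the results using the $C^{1,\alpha}$-regularity of $\Psi(x_1,\cdot)$ from Lemma \ref{lemm:Reqularity_mu}.
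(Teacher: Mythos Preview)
Your upper bound is essentially the paper's argument: both plug a test function localized near $x_1=0$ at scale $\ve^{1/2}$ into the variational characterization and estimate the four pieces of $A_\Psi$ term by term, using the mean--value property for the weight and the quadratic behaviour of $\mu$ at its minimum. The paper is slightly more agnostic---it takes a generic profile $\varphi(x_1/\ve^{\gamma})$ with $\varphi\in C_c^\infty(\mathbb R)$, computes the scalings $\ve^{-2\gamma}$, $\ve^{\,-1}$, $\ve^{\,-1}$, $\ve^{2\gamma-2}$ of the four terms, and then optimizes to discover $\gamma=\tfrac12$---while you go directly to the Hermite eigenfunctions at the known scale; both give $\nu_j^{\ve,+}\le C_j/\ve$, and the extension to $j\ge 2$ via a $j$-dimensional trial space is the same in spirit.

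Where you depart from the paper is the lower bound. The paper's proof in fact only establishes the upper estimate; the absolute value in the lemma statement is asserted but not argued there (it is, however, used later as input in the a~priori estimate for $w^\ve$). Your sketch---replace the sign-changing weights by their positive cell averages via the mean-value lemma, then control $\|v\|_{L^2}^2$ by $(\rho_\Psi^\ve v,v)$ plus a small gradient remainder---is the right instinct, but the step you flag as hard does not close as written. Applying Lemma~\ref{lm:oscillating-integrals} to $\ve^{-2}(\mu(x_1)-\mu(0))\rho_\Psi^\ve v^2$ produces an error of size $O(\ve^{-1}\|v\|_{L^2}\|\nabla v\|_{L^2})$, and a bare Young inequality turns this into a $\|\nabla v\|^2_{L^2}$ term with a constant that is \emph{not} small, so it cannot be absorbed into the coercive gradient part. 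The phrase ``cell-by-cell Friedrichs plus patching via $C^{1,\alpha}$ regularity of $\Psi$'' is too vague to resolve this; what is actually needed is to keep the factor $(\mu(x_1)-\mu(0))$ inside the error and to absorb against the \emph{combination} of the gradient term and the non-negative averaged potential $\ve^{-2}\!\int(\mu(x_1)-\mu(0))v^2\,dx$ via a weighted Young inequality. This is effectively what the paper does after rescaling in the proof of Lemma~\ref{lm:a priori est w_eps}, though even there the two-sided eigenvalue bound is invoked rather than derived.
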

\begin{proof}
We begin by estimating the first positive eigenvalue $\nu_{1}^{\ve,+}$. For any $v\in H^1(\Omega_\ve)$, $(\rho_\Psi^\ve v,v)_{L^2(\Omega)}>0$ we have:
\begin{equation}
\label{eq:estimate above}
\begin{aligned}
    \nu_{1}^{\ve,+} &\leq
    \frac{1}{(\rho_{\Psi}^{\ve} v, v)_{L^{2}(\Omega_{\ve})}} 
     \Bigg( \int_{\Omega_\ve} a_\Psi^\ve \nabla v \cdot \nabla v \, dx\\
&+\int_{\Omega_\ve} a(x_1,y)\nabla_x\Psi(x_1,y)\big|_{y=x/\ve} \cdot \nabla(\Psi^\ve v^2 )\, dx\\
&- \frac{1}{\ve}\int_{\Omega_\ve} {\rm div}_x(a(x_1,y) \nabla_y \Psi(x_1,y)) \Psi(x_1, y)\big|_{y=x/\ve}\, v^2 \, dx \\
&+ \int_{\Omega_\ve}\frac{\mu(x_1)-\mu(0)}{\ve^2} \rho_\Psi^\ve v^2 \, dx \Bigg).
\end{aligned}
\end{equation}
Let us choose a test function $\displaystyle\varphi_{\ve} = \varphi(\frac{x_{1}}{\ve^{\gamma}})$, for some $0< \gamma <1$ and $\varphi \in C_c^\infty(\mathbb{R})$ with $\|\varphi\|_{L^2(\mathbb{R})} =1$. {Thus, $\|\varphi_\ve\|_{L^2(\Omega_\ve)}^2=O(\ve^{(d-1)} \ve^\gamma)$ and $\|\nabla \varphi_\ve\|_{L^2(\Omega_\ve)}^2=O(\ve^{(d-1)} \ve^{-\gamma})$.} With the help of (\textbf{H1})-(\textbf{H3}), \textbf{(H5)}, and by the regularity properties of $\Psi$, we have for small enough $\ve$:
\begin{align*}
    &\int_{\Omega_\ve} a_\Psi^\ve(x_{1}, \frac{x}{\ve}) \nabla \varphi(\frac{x_{1}}{\ve^{\gamma}}) \cdot \nabla \varphi(\frac{x_{1}}{\ve^{\gamma}})\, dx = O(\ve^{d-1-\gamma}),\\
    &\int_{\Omega_\ve} a(x_1,y)\nabla_x\Psi(x_1,y)\big|_{y=x/\ve} \cdot \nabla(\Psi^\ve \varphi(\frac{x_{1}}{\ve^{\gamma}})^2 )\, dx = O(\ve^{\gamma-1+d-1}),\\
    &-\frac{1}{\ve}\int_{\Omega_\ve} {\rm div}_x(a(x_1,y) \nabla_y \Psi(x_1,y)) \Psi(x_1, y)\big|_{y=x/\ve}\, \varphi(\frac{x_{1}}{\ve^{\gamma}})^2 \, dx = O(\ve^{\gamma-1+d-1}),\\
    &\int_{\Omega_\ve}\frac{\mu(x_1)-\mu(0)}{\ve^2} \rho_\Psi^\ve \varphi(\frac{x_{1}}{\ve^{\gamma}})^2\, dx = O(\ve^{d-1+3 \gamma -2}).
\end{align*}
{Using the normalization condition $\int_Y \rho \Psi^2\, dy=1$ and Lemma \ref{lm:estimate}, we obtain
\begin{align*}
    &\int_{\Omega_\ve} \rho_\Psi^\ve \varphi_\ve^2\, dx
    = \frac{1}{|Q|} \int_{\Omega_\ve} \big(\int_Y \rho \Psi^2\, dy\big) \varphi_\ve^2\, dx + O(\ve \|\varphi_\ve\|_{L^2(\Omega_\ve)})\|\nabla \varphi_\ve\|_{L^2(\Omega_\ve)}\\
    &=
\|\varphi_\ve\|_{L^2(\Omega_\ve)}^2 + O(\ve \|\varphi_\ve\|_{L^2(\Omega_\ve)}\|\nabla \varphi_\ve\|_{L^2(\Omega_\ve)})=O(\ve^\gamma \ve^{d-1}).
\end{align*}}
Then the estimate \eqref{eq:estimate above} becomes
\begin{align*}
    |\nu_{1}^{\ve,+}| \leq C\ve^{-\gamma}(\ve^{-\gamma} + \ve^{\gamma - 1} + \ve^{3 \gamma - 2}).
\end{align*}
Equating the powers of $\ve$ on the right-hand side of the last inequality we can see that  
the best choice of $\gamma$ for the considered type of test function is $\gamma = \frac{1}{2}$, and the estimate \eqref{eq:est-nu^eps} is proved for $j=1$.
The following eigenvalues $\nu_{j}^{\ve,+}$, $j = 2,3, \ldots$ can be estimated in a similar way by  choosing a test function that concentrates in a vicinity of $x_{1} = 0$, which is the minimum point of $\mu(x_{1})$, and is orthogonal to the first $j-1$ eigenfunctions $u_{k}^{\ve}$, $k = 1, \ldots, j-1$.

\end{proof}
\subsection{Rescaling}
\label{sec rescaling and apriori estimate}
We change the variables in \eqref{eq:v-eps} setting
$\displaystyle z= \frac{x}{\sqrt{\ve}}$.
With $v^{\ve}(\sqrt{\ve}z) = w^{\ve}(z)$, \eqref{eq:v-eps} becomes
\begin{align}
  \label{eq:rescle-final}
\begin{split}
\begin{cases}
     &\displaystyle 
     -\mathrm{div}(a_{\Psi}^{\ve}(\sqrt{\ve} z)\nabla w^{\ve}(z)) + C_\Psi^{\ve}w^{\ve}(z)\\[2mm]
     \displaystyle
     &\displaystyle+ \frac{\mu(\sqrt{\ve} z_{1}) - \mu(0)}{\ve} \rho_{\Psi}^{\ve}(\sqrt{\ve} z) w^{\ve}(z)
     = \ve \nu^{\ve} \rho_{\Psi}^{\ve}(\sqrt{\ve} z) w^{\ve}(z), \quad z\in \ve^{-\frac{1}{2}}\Omega_\ve,\\[2mm]
  &a_\Psi^\ve(\sqrt{\ve}z)\nabla w^\ve(z) \cdot \nu 
  + w^\ve(z) a^\ve \Psi^\ve \nabla_z \Psi(\sqrt{\ve}z_{1}, y)\big|_{y=z/\sqrt{\ve}} \cdot n =0, \quad \ve^{-\frac{1}{2}}\Sigma_\ve,\\[2mm]
      &w^{\ve}(-\ve^{-1/2},z')=w^{\ve}(\ve^{-1/2},z')=0, \quad z'\in \sqrt{\ve} Q,
      \end{cases}
    \end{split}
\end{align}
where 
\begin{align*}
    &a_{\Psi}^{\ve} (\sqrt{\ve}z) = a_{\Psi} \left( \sqrt{\ve}z_{1}, \frac{z}{\sqrt{\ve}} \right), \quad \rho_{\Psi}^{\ve}(\sqrt{\ve}z) = \rho_{\Psi} \left( \sqrt{\ve}z_{1}, \frac{z}{\sqrt{\ve}} \right),\\
    & 
    C_\Psi^{\ve}(\sqrt \ve z) = \left(- \Psi^{\ve} \mathrm{div}(a  \nabla_{z} \Psi) - \frac{1}{\sqrt{\ve}}\Psi^{\ve} \mathrm{div}_{z}(a \nabla_{\zeta} \Psi)\right)\left( \sqrt{\ve}z_{1}, \frac{z}{\sqrt{\ve}} \right).
\end{align*}
We will derive a priori estimates for the eigenfunctions of \eqref{eq:rescle-final} in terms of a singular measure $d\mu_\ve$ charging the rescaled domain $\ve^{-1/2}\Omega_\ve$.
Let us define a Radon measure on $\mathbb{R}^{d}$ by
\begin{align}
\label{def:mu_eps}
    \mu_{\ve}(B) = \displaystyle \frac{\ve^{-(d-1)/2}}{|Q|}\int_{B} \chi_{\ve^{-\frac{1}{2}} \Omega_{\ve}} \, dx,
\end{align}
for all Borel sets $B$, where $\chi_{\ve^{-\frac{1}{2}} \Omega_{\ve}}$ is the characteristic function on the rescaled domain $\ve^{-\frac{1}{2}} \Omega_{\ve}$. 

Let us choose the normalization in the original domain $\Omega_{\ve}$:
$\int_{\Omega_{\ve}} v_{i}^{\ve} v_{j}^{\ve} \, dx = \ve^{1/2} \ve^{d-1} |Q| \delta_{ij}$.
Then in the rescaled domain, the normalization condition becomes
\begin{align}
\label{eq:norm-cond-rescaled}
    \int_{\ve^{-1/2}\Omega_{\ve}} v_{i}^{\ve}(\sqrt{\ve} z) v_{j}^{\ve}(\sqrt{\ve}z) \, dz = \ve^{(d-1)/2} |Q| \delta_{ij},
\end{align}
and in terms of the singular measure:
\begin{align}
\label{eq:norm-cond-rescaled}
    \int_{\mathbb{R}^{d}} v_{i}^{\ve} v_{j}^{\ve} \, d\mu_{\ve} = \delta_{ij}.
\end{align}

\begin{lemma}
\label{lm:a priori est w_eps}
Suppose that \textbf{(H1)}--\textbf{(H6)} are satisfied. Let $(\nu^{\ve},w^{\ve})$ be an eigenpair of \eqref{eq:rescle-final} with the eigenfunctions normalized by $\int_{\mathbb R^d} w_i^\ve w_j^\ve\, d\mu_\ve = \delta_{ij}$.
Then $w^\ve$ satisfies the following estimate
\begin{align}
\label{eq:estimate}
    \int_{\mathbb R^d} |\nabla w_\ve|^2\, d\mu_\ve
    + \int_{\mathbb R^d} |z_1 w_\ve|^2\, d\mu_\ve \le C,
\end{align}
with a constant $C$ independent of $\ve$.
\end{lemma}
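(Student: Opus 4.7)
The idea is to derive an energy identity by testing the weak form of \eqref{eq:rescle-final} with $w^\ve$ itself, and to read off from it coercive bounds on $\int|\nabla w^\ve|^2\,d\mu_\ve$ and $\int z_1^2(w^\ve)^2\,d\mu_\ve$. Taking $\varphi=v^\ve$ in \eqref{eq:weak-v_eps}, rescaling by $x=\sqrt\ve z$, and dividing by $\ve^{(d-1)/2}|Q|$ yields an identity of the schematic form
\begin{equation*}
\int a_\Psi^\ve|\nabla w^\ve|^2\,d\mu_\ve \;+\; R^\ve[w^\ve] \;+\; \int \frac{\mu(\sqrt\ve z_1)-\mu(0)}{\ve}\,\rho_\Psi^\ve\,(w^\ve)^2\,d\mu_\ve \;=\; \ve\nu^\ve\int \rho_\Psi^\ve\,(w^\ve)^2\,d\mu_\ve,
\end{equation*}
where $R^\ve[w^\ve]$ collects the contributions coming from the slow gradient of $\Psi$ (i.e.\ the second and third terms in $A_\Psi$ and the lateral-boundary term of \eqref{eq:v-eps}).

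The leading quadratic form is coercive in the gradient: by Lemma \ref{lm:existence-mu} the principal eigenfunction $\Psi$ is strictly positive and, by Lemma \ref{lemm:Reqularity_mu}, continuous in $(x_1,y)$; combined with (H3) this gives $a_\Psi^\ve\xi\cdot\xi\ge c|\xi|^2$ uniformly in $\ve$. To produce the confining potential, I would Taylor expand using Lemma \ref{lemm:Reqularity_mu} and (H6) ($\mu'(0)=0$, $\mu''(0)>0$) to obtain $(\mu(\sqrt\ve z_1)-\mu(0))/\ve=\tfrac12\mu''(0)z_1^2+o_\ve(z_1^2)$, so that the third term on the left behaves as a $z_1^2$-weighted norm modulated by the oscillating weight $\rho_\Psi^\ve$. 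The normalization $\int_Y \rho\Psi^2\,dy=1$ from Lemma \ref{lm:existence-mu} then allows one to replace $\rho_\Psi^\ve$ by its cell mean $1$, at the cost of a remainder; since $\rho\Psi^2-1$ has zero $y_1$-mean for each fixed $x_1$, the mean-value property from Appendix B gives a correction of order $\sqrt\ve$, absorbable into the left-hand side through Young's inequality.

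The right-hand side is bounded by the preceding lemma: $|\ve\nu^\ve|\le C$, while $|\rho\Psi^2|$ is uniformly bounded and $\int (w^\ve)^2\,d\mu_\ve=1$ by \eqref{eq:norm-cond-rescaled}, so the entire right-hand side is $O(1)$. Combining the coercivity from the first and third terms with the $O(1)$ bound on the right-hand side yields \eqref{eq:estimate}.

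The main obstacle is the analysis of $R^\ve[w^\ve]$: the coefficient $C_\Psi^\ve$ contains the apparently singular piece $-\tfrac{1}{\sqrt\ve}\Psi\,\mathrm{div}_z(a\nabla_\zeta\Psi)$, and the lateral-boundary term carries a similar factor $a^\ve\Psi^\ve\nabla_x\Psi\cdot n$. These are not genuinely singular: integrating by parts in the slow variable $z_1$ and using the cell identity $-\mathrm{div}_y(a\nabla_y\Psi)=\mu\rho\Psi$ (together with its $x_1$-derivative established in Lemma \ref{lemm:Reqularity_mu}) recombines them into a $\sqrt\ve$-small expression bilinear in $\nabla w^\ve$ and $w^\ve$, controlled by $\sqrt\ve\bigl(\|\nabla w^\ve\|_{L^2(d\mu_\ve)}^2+\|w^\ve\|_{L^2(d\mu_\ve)}^2\bigr)$ after another Young's inequality. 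This cancellation is precisely what the factorization $u^\ve=\Psi^\ve v^\ve$ was designed to achieve, and once it is carried out the estimate follows.
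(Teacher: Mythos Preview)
Your overall strategy matches the paper's: test the weak form with $w^\ve$, use coercivity of $a_\Psi^\ve$, extract the quadratic confining potential from $(\mu(\sqrt\ve z_1)-\mu(0))/\ve$, replace the oscillating weight $\rho_\Psi^\ve$ by its cell mean via the mean-value property of Appendix~B, and bound the right-hand side using \eqref{eq:est-nu^eps} and the normalization. This part is fine.

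The gap is in your handling of $R^\ve[w^\ve]$. You claim that the ``apparently singular'' contributions coming from $C_\Psi^\ve$ and the lateral-boundary term can be recombined, via integration by parts in $z_1$ and the cell identity $-\mathrm{div}_y(a\nabla_y\Psi)=\mu\rho\Psi$, into something of size $\sqrt\ve\bigl(\|\nabla w^\ve\|^2+\|w^\ve\|^2\bigr)$. This is not correct. In the weak formulation \eqref{eq:weak-w_eps} the factorization has \emph{already} consumed the cell identity to remove the genuine $\ve^{-2}$ singularity; what survives are bounded cross terms such as
\[
\int_{\mathbb R^d}\bigl(a\nabla_x\Psi\cdot\nabla_\zeta\Psi\bigr)(w^\ve)^2\,d\mu_\ve
\quad\text{and}\quad
-\int_{\mathbb R^d}\Psi\,\mathrm{div}_x(a\nabla_\zeta\Psi)\,(w^\ve)^2\,d\mu_\ve,
\]
whose integrands are uniformly bounded but do \emph{not} have zero cell mean (indeed, they are precisely the terms that produce $c^{\mathrm{eff}}$ in the limit). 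They are therefore $O(\|w^\ve\|_{L^2(d\mu_\ve)}^2)=O(1)$, not $O(\sqrt\ve)$, and no further use of the cell identity or its $x_1$-derivative will shrink them. The paper's fix is the obvious one: bound these contributions by $C_0\|w^\ve\|_{L^2(d\mu_\ve)}^2=C_0$ and move them to the right-hand side (the ``shift the spectrum'' step). Your argument would go through once you replace the incorrect $\sqrt\ve$-smallness claim by this trivial $O(1)$ bound.

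A second, smaller point: writing $(\mu(\sqrt\ve z_1)-\mu(0))/\ve=\tfrac12\mu''(0)z_1^2+o_\ve(z_1^2)$ is dangerous because $z_1$ ranges over $[-\ve^{-1/2},\ve^{-1/2}]$ and you do not yet know that $w^\ve$ is localized, so the Taylor remainder cannot be controlled uniformly. The paper avoids this by using only the \emph{quadratic equivalence} $\mu(x_1)-\mu(0)\ge c\,x_1^2$ on $[-1,1]$, which follows from (H6) and compactness; after rescaling this gives the lower bound $\ge c\,z_1^2$ on the whole rescaled interval, which is all that is needed for the a priori estimate.
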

\begin{proof}
Weak formulation of the rescaled problem \eqref{eq:rescle-final} reads
\begin{align}
\label{eq:weak-w_eps}
&\int_{\ve^{-1/2}\Omega_\ve} a_\Psi^\ve(\sqrt{\ve}z) \nabla w^\ve(z) \cdot \nabla \varphi(z)\, dz \nonumber\\
&+\sqrt{\ve}\int_{\ve^{-1/2}\Omega_\ve} a(x_1,\zeta)\nabla_x\Psi(x_1,\zeta)\big|_{x_{1}=\sqrt{\ve}z_{1}, \, \zeta =z/\sqrt{\ve}} \cdot \nabla(\Psi^\ve(\sqrt{\ve}z))\, w^\ve(z) \varphi(z)\, dz \nonumber\\
&+\sqrt{\ve}\int_{\ve^{-1/2}\Omega_\ve} a(x_1,\zeta)\nabla_x\Psi(x_1,\zeta)\big|_{x_{1}=\sqrt{\ve}z_{1}, \, \zeta =z/\sqrt{\ve}} \cdot \nabla w^\ve(z) \varphi(z) \Psi^\ve(\sqrt{\ve}z) \, dz \nonumber\\
&+\sqrt{\ve}\int_{\ve^{-1/2}\Omega_\ve} a(x_1,\zeta)\nabla_x\Psi(x_1,\zeta)\big|_{x_{1}=\sqrt{\ve}z_{1}, \, \zeta =z/\sqrt{\ve}} \cdot w^\ve(z) \nabla \varphi(z)\Psi^\ve(\sqrt{\ve}z) \, dz \nonumber\\
&- \int_{\ve^{-1/2}\Omega_\ve} {\rm div}_x(a(x_1,\zeta) \nabla_\zeta \Psi(x_1,\zeta)) \Psi(x_1, \zeta)\big|_{x_{1}=\sqrt{\ve}z_{1}, \, \zeta =z/\sqrt{\ve}}\, w^\ve(z)\, \varphi(z)\, dz\\
&+ \int_{\ve^{-1/2}\Omega_\ve}\frac{\mu(\sqrt{\ve}z_1)-\mu(0)}{\ve} \rho_\Psi^\ve(\sqrt{\ve}z) w^\ve(z)\,\varphi(z) dz\\
&= 
\ve \nu^\ve \int_{\ve^{-1/2}\Omega_\ve} \rho_\Psi^\ve(\sqrt{\ve}z) w^\ve(z) \varphi(z)\, dz,\nonumber
\end{align}
for all $\phi \in H^{1}(\ve^{-1/2}\Omega_\ve)$ such that $\phi(-1/\sqrt \ve, z')=\phi(1/\sqrt \ve, z')=0$.
{
We use $w^\ve$ as a test function in the weak formulation above and rewrite it in terms of $\mu_\ve$:
\begin{align}
\label{eq:weak-w_eps-measure}
&\int_{\mathbb{R}^{d}} a_\Psi^\ve(\sqrt{\ve}z) \nabla w^\ve \cdot \nabla w^\ve\, d\mu_\ve \nonumber\\
&+\sqrt{\ve}\int_{\mathbb{R}^{d}} a(x_1,\zeta)\nabla_x\Psi(x_{1},\zeta)\big|_{x_{1} = \sqrt{\ve}z_{1}, \, \zeta = z/\sqrt{\ve}} \cdot \nabla\Psi(\sqrt{\ve}z)\, (w^\ve)^2\, d\mu_\ve \nonumber\\
&+\sqrt{\ve}\int_{\mathbb{R}^{d}} a(x_1,\zeta)\nabla_x\Psi(x_{1}, \zeta)\big|_{x_{1} = \sqrt{\ve}z_{1}, \, \zeta = z/\sqrt{\ve}} \cdot \nabla w^\ve\, w^\ve \Psi(\sqrt{\ve}z) \, d\mu_\ve \nonumber\\
&+\sqrt{\ve}\int_{\mathbb{R}^{d}} a(x_1,\zeta)\nabla_x\Psi(x_{1}, \zeta)\big|_{x_{1} = \sqrt{\ve}z_{1}, \, \zeta = z/\sqrt{\ve}} \cdot w^\ve(z) \nabla w^\ve \Psi(\sqrt{\ve}z) \, d\mu_\ve \nonumber\\
&- \int_{\mathbb{R}^{d}} {\rm div}_x(a(x_1,\zeta) \nabla_\zeta \Psi(x_1,\zeta)) \Psi(x_1, \zeta)\big|_{x_{1}=\sqrt{\ve}z_{1}, \, \zeta = z/\sqrt{\ve}}\, (w^\ve)^2\, d\mu_\ve\\
&+ \int_{\mathbb{R}^{d}}\frac{\mu(\sqrt{\ve}z_1)-\mu(0)}{\ve} \rho_\Psi^\ve(\sqrt{\ve}z) (w^\ve)^2 d\mu_\ve\\
&= 
\ve \nu^\ve \int_{\ve^{-1/2}\Omega_\ve} \rho_\Psi^\ve(\sqrt{\ve}z) (w^\ve)^2\, d\mu_\ve.\nonumber
\end{align}
The right-hand side of the above identity is estimated using Lemma \ref{lm:oscillating-integrals}, the normalization in Lemma \ref{lm:existence-mu}, the estimate for the eigenvalues \eqref{eq:est-nu^eps}, and the normalization condition for $w_i^\ve$:
\begin{align}
    \label{eq:est:RHS}
    \ve \nu^\ve \left|\int_{\ve^{-1/2}\Omega_\ve} \rho_\Psi^\ve(\sqrt{\ve}z) |w^\ve|^2\, dz \right|
    \le C.
\end{align}
Let us estimate the second term in the equality above separately, using the regularity properties of the coefficients and $\Psi(x_1, y)$:
\begin{align*}
& \left|\sqrt{\ve}\int_{\ve^{-1/2}\Omega_\ve} a(x_1,\zeta)\nabla_x\Psi(x_1,\zeta)\big|_{x_{1}=\sqrt{\ve}z_{1}, \, \zeta =z/\sqrt{\ve}} \cdot \nabla(\Psi^\ve(\sqrt{\ve}z))\, (w^\ve(z))^2\, dz \right|\\
&\le
\sqrt{\ve} \left|\int_{\ve^{-1/2}\Omega_\ve} a(x_1,\zeta)\nabla_x\Psi(x_1,\zeta)\big|_{x_{1}=\sqrt{\ve}z_{1}, \, \zeta =z/\sqrt{\ve}} \cdot \nabla_z(\Psi^\ve(\sqrt{\ve}z))\, (w^\ve(z))^2\, dz \right| \\
&+ \left|\int_{\ve^{-1/2}\Omega_\ve} a(x_1,\zeta)\nabla_x\Psi(x_1,\zeta)\big|_{x_{1}=\sqrt{\ve}z_{1}, \, \zeta
 =z/\sqrt{\ve}} \cdot \nabla_\zeta \Psi^\ve(\sqrt{\ve}z)\, (w^\ve(z))^2\, dz \right|\\
&\le 
C_1\ve \int_{\ve^{-1/2}\Omega_\ve} |w^\ve|^2\, dz
+ C_2\int_{\ve^{-1/2}\Omega_\ve}|w^\ve|^2\, dz\le C_0 \int_{\ve^{-1/2}\Omega_\ve}|w^\ve|^2\, dz.
\end{align*}
This shows that this term does not vanish, as $\ve \to 0$, and, thus, we shift the spectrum by adding $C_0 \|w^\ve\|_{L^2(\ve^{-1/2}\Omega_\ve)}^2$ to both sides of the weak formulation for the rescaled equation. 

As $\rho_{\Psi}^{\ve}(\sqrt{\ve}z)$ is sign-changing, we cannot estimate the term containing $(\mu(\sqrt{\ve} z_1) - \mu(0))$ directly from below. Then we will use the following mean-value theorem to estimate this term, using Corollary \eqref{corollary_two scale_average}
{\begin{align*}
    &\int_{\mathbb{R}^{d}} 
    \frac{(\mu(\sqrt{\ve}z_{1}) - \mu(0))}{\ve} \rho_{\Psi}^{\ve}(\sqrt{\ve}z) (w^{\ve}(z))^{2} \, d \mu_{\ve}\\
    &~~~~~ -\int_{\mathbb{R}^{d}} \frac{(\mu(\sqrt{\ve}z_{1}) - \mu(0))}{\ve} \Big( \int_{Y} \rho_{\Psi}^{\ve}(\sqrt{\ve}z_{1},\zeta) \, d\zeta \Big) (w^{\ve}(z))^{2} \, d \mu_{\ve}\\
    &= O(\sqrt{\ve} \|w^{\ve}\|_{L^{2}(\mathbb{R}^{d}, \mu_{\ve})} \| \nabla w^{\ve}\|_{L^{2}(\mathbb{R}^{d}, \mu_{\ve})}).
\end{align*}}
By \textbf{(H5)},
\begin{align*}
    \mu(\sqrt{\ve}z_{1}) - \mu(0) = \frac{1}{2} \mu''(0)(\sqrt{\ve}|z_{1}|)^{2} + o(|\sqrt{\ve}z_{1}|^{2}).
\end{align*}
Since we do not know yet that $w^{\ve}$ is localized, Taylor's expansion cannot be used to obtain an estimate for the remainder. Instead, we will use a quadratic equivalence, a forward consequence to Taylor's theorem. 
We substitute $(\mu(\sqrt{\ve}z_{1}) - \mu(0)) \Big( \int_{Y} \rho_{\Psi}^{\ve}(\sqrt{\ve}z_{1},\zeta) \, d\zeta \Big)$ with the equivalent quadratic function $\frac{\mu''(0)}{2} (\sqrt{\ve} |z_{1}|)^{2}$ in the weak formulation \eqref{eq:weak-w_eps-measure}.

Finally, by the coercivity of $a$ and the regularity properties of $\Psi$, we derive \eqref{eq:estimate}. Note that the estimate {$\|z_{1} w_{\ve}\|_{L^{2}(\mathbb{R}^{d}, \mu_{\ve})} \leq C$ implies $\|w_{\ve}\|_{L^{2}(\mathbb{R}^{d}, \mu_{\ve})} \leq C$ because of the growing weight $|z_{1}|$.} }
\end{proof}
The proof of the following lemma can be found in \cite{pettersson2017two}, \cite{PaPe-ApplicAnal-2014}.
\begin{lemma} 
Under conditions in Lemma \ref{lm:a priori est w_eps}, there exists $w \in H^{1}(\mathbb{R}^{d},\mu^{*})$ such that
\begin{align*}
&w^{\ve}\overset{2}{\rightharpoonup} w(z_{1},0) \quad \text{in } L^{2}(\mathbb{R}^{d}, \mu_{\ve}),\\
&\nabla w^{\ve} \overset{2}{\rightharpoonup}
\nabla^{\mu^{*}}w(z_{1},0) + \nabla_{\zeta} w^{1}(z_{1},\zeta) \quad \text{in } L^{2}(\mathbb{R}^{d},\mu_{\ve}),
\end{align*}
where $w^{1}(z_{1},\zeta) \in L^{2}(\mathbb{R}; H^{1}(Y))$ is $1$-periodic in $\zeta_{1}$.
\end{lemma}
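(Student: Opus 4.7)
The plan is to invoke the compactness theorem for two-scale convergence with respect to the singular measures $\mu_\ve$ defined in \eqref{def:mu_eps}, as developed in \cite{pettersson2017two} and \cite{PaPe-ApplicAnal-2014}. The argument has three ingredients: (i) identification of the limit measure $\mu^*$; (ii) compactness of $\{w^\ve\}$ in $L^2(\mathbb R^d,\mu_\ve)$ using Lemma \ref{lm:a priori est w_eps}; and (iii) identification of the two-scale limit of $\nabla w^\ve$.

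First, I would identify $\mu^*$. The measure $\mu_\ve$ is supported on the rescaled cylinder $\ve^{-1/2}\Omega_\ve$, whose cross-section $\sqrt\ve Q$ shrinks to a point, while the axial extent $(-1/\sqrt\ve,1/\sqrt\ve)$ fills $\mathbb R$. The normalization factor $\ve^{-(d-1)/2}/|Q|$ is precisely calibrated so that $\mu_\ve$ converges, in the sense of Radon measures on $\mathbb R^d$, to $\mu^* = dz_1 \otimes \delta_{0}(z')$, the one-dimensional Lebesgue measure on the axis $\mathbb R\times\{0\}$. This explains the evaluation at $z'=0$ in the statement: any $\mu^*$-integrable limit depends only on $z_1$.

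Next, from Lemma \ref{lm:a priori est w_eps} the sequence $\{w^\ve\}$ is bounded in $L^2(\mathbb R^d,\mu_\ve)$ together with $\{\nabla w^\ve\}$, and the additional weight estimate $\|z_1 w^\ve\|_{L^2(\mathbb R^d,\mu_\ve)} \le C$ provides tightness in the unbounded axial direction, preventing loss of mass at infinity. Applying the compactness result for sequences bounded in $H^1(\mathbb R^d,\mu_\ve)$ for the singular measure, I extract a subsequence with $w^\ve \overset{2}{\rightharpoonup} w(z_1,0)$ for some $w \in H^1(\mathbb R^d,\mu^*)$. The decomposition of the gradient limit then follows from testing the two-scale identity against functions of the form $\varphi(z_1) + \sqrt\ve\,\psi(z_1, z/\sqrt\ve)$ with $\psi$ smooth, compactly supported in $z_1$ and $Y$-periodic in the fast variable $\zeta_1$, integrating by parts, and using the standard orthogonality argument to produce the corrector $w^1(z_1,\zeta) \in L^2(\mathbb R; H^1(Y))$ that is $1$-periodic in $\zeta_1$.

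The main subtlety, which is the reason one has to work with the singular-measure framework rather than ordinary two-scale convergence, is the transverse direction: one must rule out genuine oscillations across the cross-section $\sqrt\ve Q$, since its size is comparable to the fast oscillation scale $\sqrt\ve$ used in the rescaling. This is resolved by the fact that the admissible two-scale test functions oscillate on scale $1$ in $\zeta=z/\sqrt\ve$ while the support of $\mu_\ve$ contracts to the axis, forcing the macroscopic limit to be transversally constant and isolating the corrector to the periodic cell $Y$ in the fast variable. Once this is in place, the two-scale limits in the statement follow in the standard way.
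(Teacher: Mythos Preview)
Your proposal is correct and matches the paper's approach: the paper itself provides no proof for this lemma, instead citing \cite{pettersson2017two} and \cite{PaPe-ApplicAnal-2014} directly, and your outline accurately captures the content of those references (weak-$*$ convergence $\mu_\ve \to \mu^*=dz_1\otimes\delta_0(z')$, boundedness from Lemma~\ref{lm:a priori est w_eps} yielding two-scale compactness, and the standard corrector identification for the gradient via oscillating test functions). Your remark on tightness in the axial variable from the weighted bound $\|z_1 w^\ve\|_{L^2(\mathbb R^d,\mu_\ve)}\le C$ is apposite and is precisely what distinguishes this setting from the bounded-domain case.
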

 
\subsection{Passage to the limit in \eqref{eq:weak-w_eps}}
\label{sec passage of limit}

Recall the weak formulation of the rescaled problem \eqref{eq:rescle-final}:
\begin{align}
\label{eq:weak-w_eps-measure-1}
&\int_{\mathbb{R}^{d}} a_\Psi^\ve(\sqrt{\ve}z) \nabla w^\ve(z) \cdot \nabla \varphi(z)\, d\mu_\ve \nonumber\\
&+\sqrt{\ve}\int_{\mathbb{R}^{d}} a(x_1,\zeta)\nabla_x\Psi(x_1, \zeta )\big|_{x_{1}=\sqrt{\ve}z_{1}, \, \zeta =z/\sqrt{\ve}} \cdot \nabla\Psi^\ve(\sqrt{\ve}z)\, w^\ve(z) \varphi(z)\, d\mu_\ve \nonumber\\
&+\sqrt{\ve}\int_{\mathbb{R}^{d}} a(x_1,\zeta)\nabla_x\Psi(x_1,\zeta)\big|_{x_{1}=\sqrt{\ve}z_{1}, \, \zeta =z/\sqrt{\ve}} \cdot \nabla w^\ve(z) \varphi(z) \Psi^\ve(\sqrt{\ve}z) \, d\mu_\ve \nonumber\\
&+\sqrt{\ve}\int_{\mathbb{R}^{d}} a(x_1,\zeta)\nabla_x\Psi(x_1,\zeta)\big|_{x_{1}=\sqrt{\ve}z_{1}, \, \zeta =z/\sqrt{\ve}} \cdot w^\ve(z) \nabla \varphi(z)\Psi^\ve(\sqrt{\ve}z) \, d\mu_\ve \nonumber\\
&- \int_{\mathbb{R}^{d}} {\rm div}_x(a(x_1,\zeta) \nabla_\zeta
\Psi(x_1,\zeta)) \Psi(x_1, \zeta)\big|_{x_{1}=\sqrt{\ve}z_{1}, \, \zeta =z/\sqrt{\ve}}\, w^\ve(z)\, \varphi(z)\, d\mu_\ve\\
&+ \int_{\mathbb{R}^{d}}\frac{\mu(\sqrt{\ve}z_1)-\mu(0)}{\ve} \rho_\Psi^\ve(\sqrt{\ve}z) w^\ve(z)\,\varphi(z) d\mu_\ve \nonumber\\
&= 
\ve \nu^\ve \int_{\mathbb{R}^{d}} \rho_\Psi^\ve(\sqrt{\ve}z) w^\ve(z) \varphi(z)\, d\mu_\ve,\nonumber
\end{align}
\noindent
for all $\phi \in H^{1}(\ve^{-1/2}\Omega_\ve)$ such that $\phi(-1/\sqrt \ve, z')=\phi(1/\sqrt \ve, z')=0$.

\noindent
\textbf{Step 1.}\\
Choose a test function $\Phi_{\ve} = \ve^{1/2} \phi(z) \psi(\frac{z}{\ve^{1/2}})$ in \eqref{eq:weak-w_eps-measure-1}, where $\phi \in C_{0}^{\infty}(\mathbb{R}^{d})$, $\psi \in C^{\infty}(\overline{Y})$.
The gradient of the test function is
\begin{align*}
    \nabla \Phi_{\ve} = \ve^{1/2} \psi(\frac{z}{\ve^{1/2}}) \nabla \phi(z) + \phi(z) \nabla_{\zeta} \psi(\zeta)\Big|_{\zeta = z/\ve^{1/2}}.
\end{align*}
The limit of the first term in \eqref{eq:weak-w_eps} is
\begin{align*}
    &\lim_{\ve \rightarrow 0} \int_{\mathbb{R}^{d}} a_{\Psi}^{\ve}(\sqrt{\ve}z) \nabla w^{\ve}(z) \cdot \nabla \Phi^{\ve}(z) \, d\mu_{\ve}\\
    &= \frac{1}{|Y|} \int_{\mathbb{R}^{d}} \left( \int_{Y} a_{\Psi}(0,\zeta)\left (\nabla^{\mu^{*}}w(z_{1},0) + \nabla_{\zeta} w^{1}(z_{1},\zeta) \right) \cdot \nabla_{\zeta} \psi(\zeta) \, d\zeta\right) \phi(z_{1},0) \, d\mu^{*}.
\end{align*}
{The limit of the next four terms is zero} due to the regularity properties of $\Psi$ and the small factor $\sqrt \ve$ in the test function.
The sixth term can be proved to go to zero by using Corollary 1 in \cite{PaPe-ApplicAnal-2014}. Indeed,
\begin{align*}
    &\int_{\mathbb{R}^{d}} \frac{\mu(\sqrt{\ve}z_{1}) - \mu(0)}{\ve} \rho_{\Psi}^{\ve}(\sqrt{\ve}z) w^{\ve} \Phi^{\ve}(z) \, d\mu_{\ve}\\
    &\\
    &= \displaystyle  \frac{\sqrt{\ve}}{|Y|}\int_{\mathbb{R}^{d}} \frac{\mu(\sqrt{\ve}z_{1}) - \mu(0)}{\ve} \left( \int_{Y} \rho_{\Psi}(\sqrt{\ve} z_{1}, \zeta) \psi(\zeta) \, d\zeta \right) w^{\ve} \phi(z) \, d\mu_{\ve} + O(\sqrt{\ve})\\
    &\\
    &= \displaystyle \frac{\sqrt{\ve} \mu''(0)}{2|Y|} \int_{\mathbb{R}^{d}} |z_{1}|^{2} \left( \int_{Y} \rho_{\Psi}(\sqrt{\ve} z_{1}, \zeta) \psi(\zeta) \, d\zeta \right) w^{\ve} \phi(z) \, d\mu_{\ve} \\[2mm]
    &~~~~~~~~~~~~~~~+  \frac{\sqrt{\ve}}{|Y|} \int_{\mathbb{R}^{d}} \frac{o(\ve |z_{1}|^{2})}{2\ve} \left( \int_{Y} \rho_{\Psi}(\sqrt{\ve} z_{1}, \zeta) \psi(\zeta) \, d\zeta \right) w^{\ve} \phi(z) \, d\mu_{\ve} + O(\sqrt{\ve}).
\end{align*}
Since $\phi$ has a compact support in $z_1$, $o(\ve |z_1|^2)=o(\ve)$ as $\ve \to 0$. Therefore
\begin{align*}
    &\left| \int_{\mathbb{R}^{d}} \frac{\mu(\sqrt{\ve}z_{1}) - \mu(0)}{\ve} \rho_{\Psi}^{\ve}(\sqrt{\ve}z) w^{\ve} \Phi^{\ve}(z) \, d\mu_{\ve} \right|\\
    &\leq \left| \frac{\sqrt{\ve} \mu''(0)}{2 |Y|}\int_{\mathbb{R}^{d}} |z_{1}|^{2} \left( \int_{Y} \rho_{\Psi}(\sqrt{\ve} z_{1}, \zeta) \psi(\zeta) \, d\zeta \right) w^{\ve} \phi(z) \, d\mu_{\ve}  + O(\sqrt{\ve}) \right| \le C \sqrt{\ve}.
\end{align*}
As for the right-hand side of the weak formulation \eqref{eq:weak-w_eps-measure-1}, it is estimated by using the bound for the eigenvalues \eqref{eq:est-nu^eps}:
\begin{align*}
    &\left| \ve \nu^{\ve} \int_{\mathbb{R}^{d}} \rho_{\Psi}^{\ve}(\sqrt{\ve}z) w^{\ve} \Phi^{\ve}(z) \, d\mu_{\ve} \right|
    \le C \sqrt{\ve}.
\end{align*}
Passing limit as $\ve \rightarrow 0$ in \eqref{eq:weak-w_eps-measure-1}, we obtain a problem for $w^1$:
\begin{align}
\label{limit_rescle_Phi^(ve)}
    \int_{\mathbb{R}^{d}} \left( \int_{Y} a_{\Psi}(0,\zeta)\left (\nabla^{\mu^{*}}w(z_{1},0) + \nabla_{\zeta} w^{1}(z_{1},\zeta) \right) \cdot \nabla_{\zeta} \psi(\zeta) \, d\zeta\right) \phi(z_{1},0) \, d\mu^{*} = 0.
\end{align}
We are looking for the solution in the form $w^{1}(z_{1}, \zeta) = N(\zeta) \cdot \nabla^{\mu^{*}} w(z_{1},0)$, where $N: Y \to \mathbb{R}^d$ are periodic in $\zeta_1$ solving
\begin{align*}
    &\int_{Y} a_{\Psi}(0,\zeta) \nabla_{\zeta} N_{j}(\zeta) \cdot \nabla_{\zeta} \psi(\zeta) \, d\zeta = - \int_{Y} (a_{\Psi})_{kj} \partial_{\zeta_{k}} \psi(\zeta)) d\zeta.
\end{align*}
The last integral identity yields the problem for $N_j$ in its strong form:
\begin{align}
\label{rescle_cell_problem}
    \begin{cases}
        &-\displaystyle \mathrm{div}(a_{\Psi}(0,\zeta) \nabla_{\zeta} N_{j}(\zeta)) = \partial_{\zeta_{k}} (a_{\Psi})_{kj} (0,\zeta) \quad \text{in } Y\\
        &a_{\Psi}(0,\zeta) \nabla_{\zeta} N_{j}(\zeta) \cdot n = - (a_{\Psi})_{kj} (0, \zeta) n_{k} \quad \text{on } \Sigma, \\
        & N_{j}(\cdot, \zeta')\,\, \mbox{is}\,\, 1-\mbox{periodic}, \quad j=1,2,\ldots, d.
    \end{cases}
\end{align}
In this way, we have the following convergence
\begin{align*}
    &\nabla w^{\ve}(z) \overset{2}{\rightharpoonup} \left( I + \nabla N(\zeta) \right) \nabla^{\mu^{*}} w(z_{1},0) ~~ \text{as } \ve \rightarrow 0.
    \end{align*}
 We proceed with deriving the effective problem for the limit function $w$.\\

\noindent
\textbf{Step 2:}
\noindent 
Let us take a test function $\varphi\in C_{0}^{\infty}(\mathbb{R}; C^{\infty}(\ve^{1/2}Q))$ in \eqref{eq:weak-w_eps-measure-1}
\begin{align}
\label{eq:weak-pass-limit}
\begin{split}
      &\int_{\mathbb{R}^{d}} a_{\Psi}^{\ve}(\sqrt{\ve}z) \nabla w^{\ve} \cdot \nabla \varphi \, d\mu_{\ve}  + \int_{\mathbb{R}^{d}}  (a^{\ve} \nabla_{z} \Psi^{\ve} \cdot \nabla(\Psi^{\ve} w^{\ve} \varphi) \, d\mu_{\ve}\\
      &- \int_{\mathbb{R}^{d}} \Psi^{\ve} \mathrm{div}_{z} (a^{\ve} \nabla_{\zeta} \Psi^{\ve} ) w^{\ve} \varphi(z) \, d\mu_{\ve}\\
    &+ \int_{\mathbb{R}^{d}} \frac{\mu(\sqrt{\ve}z_{1}) - \mu(0)}{\ve} \rho_{\Psi}^{\ve}(\sqrt{\ve}z) w^{\ve} \varphi(z) \, d\mu_{\ve}\\
    &~= \ve \nu^{\ve} \int_{\mathbb{R}^{d}} \rho_{\Psi}^{\ve}(\sqrt{\ve}z) w^{\ve} \varphi(z) \, d\mu_{\ve}.
\end{split}
\end{align}
The first term in \eqref{eq:weak-pass-limit}, as $\ve \to 0$,  yields
 \begin{align*}
     &\lim_{\ve \rightarrow 0} \int_{\mathbb{R}^{d}} a_{\Psi}^{\ve}(\sqrt{\ve}z) \nabla w^{\ve}(z) \cdot \nabla \varphi(z) \, d\mu_{\ve}\\
     &~~~= \frac{1}{|Y|} \int_{\mathbb{R}^{d}} \left( \int_{Y} a_{\Psi}(0,\zeta) \left( I + \nabla N(\zeta) \right) \, d\zeta\right) \nabla^{\mu^{*}} w(z_{1},0) \cdot \nabla \varphi(z_{1},0) \, d\mu^{*}\\
     &~~~= \int_{\mathbb{R}^{d}} A^{\text{eff}} \nabla^{\mu^{*}} w(z_{1},0) \cdot \nabla \varphi(z_{1},0) \, d\mu^{*},
 \end{align*}
where we denote
 \begin{align*}
     &A_{ij}^{\text{eff}} = \frac{1}{|Y|} \int_{Y}(a_{\Psi})_{ik}(0,\zeta) \left(  \delta_{kj} + \partial_{\zeta_{k}} N_{j}(\zeta) \right) \, d \zeta.
 \end{align*}
The fourth term in  \eqref{eq:weak-pass-limit}:
 \begin{align*}
     &\int_{\mathbb{R}^{d}} \frac{\mu(\sqrt{\ve}z_{1}) - \mu(0)}{\ve} \rho_{\Psi}^{\ve}(\sqrt{\ve}z) w^{\ve} \varphi(z) \, d\mu_{\ve}\\
     &= \frac{1}{|Y|} \int_{\mathbb{R}^{d}} \frac{\mu(\sqrt{\ve}z_{1}) - \mu(0)}{\ve} \left(\int_{Y} \rho_{\Psi}(0,\zeta) \, d\zeta\right) w^{\ve} \varphi(z) \, d\mu_{\ve}\\
     &+ O(\sqrt{\ve} \|w^{\ve}\|_{L^{2}(\mathbb{R}^{d},\mu_{\ve})} \|\nabla w^{\ve}\|_{L^{2}(\mathbb{R}^{d},\mu_{\ve})} )\\
     &= \frac{1}{|Y|} \int_{\mathbb{R}^{d}} \frac{\mu(\sqrt{\ve}z_{1}) - \mu(0)}{\ve} \left(\int_{Y} \rho_{\Psi}(\sqrt{\ve}z_{1},\zeta) \, d\zeta\right) w^{\ve} \varphi(z) \, d\mu_{\ve}+ O(\sqrt{\ve}).
 \end{align*}
  Writing the Taylor expansions for $\mu(\sqrt{\ve}z_{1})$ and $\langle \rho_{\Psi}(\sqrt{\ve}z_{1} , \cdot) \rangle$ we obtain, as $\ve|z_1|^2 \to 0$:
  \begin{align*}
      &\mu(\sqrt{\ve}z_{1}) = \mu(0) + \ve z_{1}^{2} \mu''(0) + o(\ve |z_{1}|^{2}),\\
      &\langle \rho_{\Psi}(\sqrt{\ve}z_{1}, \cdot) \rangle = \langle \rho_{\Psi}(0, \cdot) \rangle + \sqrt{\ve}z_{1} \langle \partial_{z_{1}} \rho_{\Psi}(0, \cdot) \rangle + \frac{\ve |z_{1}|^{2}}{2} \langle \partial_{z_{1}}^{2} \rho_{\Psi}(0, \cdot) \rangle + o(\ve |z_{1}|^{2}).
  \end{align*}
  By the Lebesgue dominated convergence theorem, since the test function has compact support,
  \begin{align*}
      &\lim_{\ve \rightarrow 0} \int_{\mathbb{R}^{d}} \frac{\mu(\sqrt{\ve}z_{1}) - \mu(0)}{\ve} \rho_{\Psi}^{\ve}(\sqrt{\ve}z) w^{\ve} \varphi(z) \, d\mu_{\ve}\\
      &= \frac{\mu''(0)}{2} \langle \rho_{\Psi}(0, \cdot) \rangle \int_{\mathbb{R}^{d}} |z_{1}|^{2} w(z_{1},0) \varphi(z_{1},0) \, d\mu^{*}.
   \end{align*}
As for the terms 2-4 in \eqref{eq:weak-w_eps-measure-1} coming from $C^\ve$, we see that the only one that contributes to the limit is the one containing $\nabla_\zeta \Psi$ (in the second term). 
\begin{align*}
    &\lim_{\ve \rightarrow 0} \sqrt{\ve} \int_{\mathbb{R}^{d}}  a(x_{1}, \zeta) \nabla_{x} \Psi(x_{1}, \zeta)\big|_{x_{1} = \sqrt{\ve}z_{1}, \, \zeta = z/\sqrt{\ve}} \cdot \frac{1}{\sqrt{\ve}} (\nabla_{\zeta} \Psi)(\sqrt{\ve}z)  \, w^{\ve}(z) \varphi(z)   \, d\mu_{\ve}\\
    &= \frac{1}{|Y|}\int_{\mathbb{R}^{d} \times Y} (a\nabla_{x} \Psi) (0, \zeta) \cdot \nabla_{\zeta} \Psi(0, \zeta)\, w(z_{1}) \varphi(z_{1},0) \, d\zeta\, d\mu^{*};\\
    &\\
    &\lim_{\ve \rightarrow 0} \int_{\mathbb{R}^{d}} \Psi(x_{1}, \zeta)\,  \mathrm{div}_{x} (a(x_{1}, \zeta) \nabla_{\zeta} \Psi(x_{1}, \zeta))\big|_{x_{1} = \sqrt{\ve}z_{1}, \, \zeta = z/\sqrt{\ve} }  \, w^{\ve}(z) \varphi(z) \, d\mu_{\ve} \\
    &= \frac{1}{|Y|}\int_{\mathbb{R}^{d} \times Y} \Psi (0, \zeta) \mathrm{div}_{x}(a \nabla_{\zeta} \Psi)(0, \zeta) \, w(z_{1}) \varphi(z_{1},0) \, d\zeta\, d\mu^{*}.
     \end{align*}
Denote    
\begin{align*}
        c^{\text{eff}} = \frac{1}{|Y|}\int_{Y}\Big( (a \nabla_{x} \Psi)(0,\zeta) \cdot \nabla_{\zeta}\Psi(0, \zeta) - \mathrm{div}_{x}(a \nabla_{\zeta} \Psi)(0, \zeta) \Psi(0, \zeta) \Big)\, d\zeta.
\end{align*}
The limit of the right-hand side of the weak formulation \eqref{eq:weak-w_eps-measure-1} is
    \begin{align*}
        &\displaystyle \lim_{\ve \rightarrow 0} \ve \nu^{\ve} \int_{\mathbb{R}^{d}} \rho_{\Psi}^{\ve} (\sqrt{\ve}z ) w^{\ve}(z) \varphi(z) \, d\mu_{\ve}
        = \nu \langle \rho_{\Psi}(0, \cdot) \rangle \int_{\mathbb{R}^{d}} w(z_{1},0) \varphi(z_{1}, 0) \, d\mu^{*}.
    \end{align*}
Finally, passing to the limit of the weak formulation \eqref{eq:weak-w_eps-measure-1} we have
\begin{equation}
\label{limit_rescaled_weak form}
\begin{aligned}
&\int_{\mathbb{R}^{d}} A^{\text{eff}} \nabla^{\mu^{*}} w(z_{1},0) \cdot \nabla \phi(z_{1},0) \, d\mu^{*}\\
&+ \frac{\mu''(0)}{2} \langle \rho_{\Psi}(0, \cdot) \rangle \int_{\mathbb{R}^{d}} |z_{1}|^{2} w(z_{1},0) \varphi(z_{1},0) \, d\mu^{*}\\
&+ \int_{\mathbb{R}^{d}} c^{\text{eff}} w(z_{1},0) \phi(z_{1},0) \, d\mu^{*} = \nu \langle \rho_{\Psi}(0, \cdot) \rangle \int_{\mathbb{R}^{d}} w(z_{1},0) \varphi(z_{1}, 0) \, d\mu^{*}.
\end{aligned}
\end{equation}
Take any test function with zero trace $\varphi(z_{1},0, \ldots, 0) = 0$ and a non-zero $\mu^{*}$-gradient, \textit{e.g.} $\varphi(z) = \displaystyle \sum_{j \neq 1} z_{j} \psi(z_{1})$, with arbitrary $\psi \in C_{c}^{\infty}(\mathbb{R})\setminus\{0\}$. Then
\begin{align*}
&\varphi(z) = z \cdot \Big(0, \psi_{2}(z_{1}), \ldots, \psi_{d}(z_{1}) \Big), ~~\varphi(z_{1}, 0, \ldots,0) = 0,\\ &\nabla\varphi(z) = \displaystyle \Big(\sum_{j \neq 1} z_{j}\psi_{j}'(z_{1}), \psi_{2}(z_{1}), \ldots, \psi_{d}(z_{1}) \Big),\\ 
&\nabla^{\mu^{*}} \varphi(z) = \nabla\varphi(z_{1}, 0, \ldots, 0) = \Big(0, \psi_{2}(z_{1}), \ldots, \psi_{d}(z_{1}) \Big).
\end{align*}
By the density of $C_{c}^{\infty}(\mathbb{R}^{d})$ in $L^{2}(\mathbb{R}^{d})$, we can take $\psi_{j} \in L^{2}(\mathbb{R}^{d})$. Taking this test function in \eqref{limit_rescaled_weak form} gives
\begin{align*}
    \int_{\mathbb{R}^{d}} A^{\text{eff}} \nabla^{\mu^{*}} w(z_{1},0) \cdot \Big(0, \psi_{2}(z_{1}), \ldots, \psi_{d}(z_{1}) \Big) \, d\mu^{*} = 0
\end{align*}
which implies
\begin{align*}
&A^{\text{eff}} \nabla^{\mu^{*}} w(z_{1},0) = \Big( \displaystyle \sum_{j=1}^{d} A_{1j}^{\text{eff}} \partial_{z_{j}}^{\mu^{*}} w(z_{1},0), 0, \ldots, 0 \Big).
\end{align*}
\noindent
Let us reformulate the equation for $N_{k}$ in the following form:
\begin{equation}
\label{rescaled_cell_problem}
\begin{aligned}
    -&\mathrm{div}(a_{\Psi}(0, \zeta) \nabla (N_{k}(\zeta) + \zeta_{k})) = 0, \quad y\in Y\\
    &a_{\Psi}(0, \zeta) \nabla (N_{k} + \zeta_{k}) = 0, \quad y\in \Sigma,\\
    &N_k(\cdot, \zeta') \,\, \mbox{is} \,\, 1-\mbox{periodic}.
\end{aligned}
\end{equation}
Multiplying \eqref{rescaled_cell_problem} by $\zeta_{m}$ for $m\neq 1$ ($\zeta_{m}$  is then $1$-periodic in $\zeta_{1}$ and thus it can be used as a test function), and integrating by parts over $Y$, we obtain
\begin{align*}
    \int_{Y} a_{\Psi}(0, \zeta) \nabla (N_{k}(\zeta) + \zeta_{k}) \cdot \nabla \zeta_{m} \, d\zeta = 0,
\end{align*}
which yields
\begin{align*}
&0= \int_{Y} \displaystyle \sum_{j = 1}^{d} (a_{\Psi})_{mj}(0, \zeta) (\partial_{\zeta_{j}}N_{k} + \delta_{jk}) \, d\zeta = A_{mk}^{\text{eff}}, \quad m \neq 1.
\end{align*}
Then the weak formulation of the limit problem \eqref{limit_rescaled_weak form} becomes
 \begin{align*}
&\int_{\mathbb{R}^{d}} A_{11}^{\text{eff}} \partial_{z_{1}}^{\mu^{*}} w(z_{1},0) \partial_{z_1} \phi(z_{1},0) \, d\mu^{*} + \frac{\mu''(0)}{2} \langle \rho_{\Psi}(0, \cdot) \rangle \int_{\mathbb{R}^{d}} |z_{1}|^{2} w(z_{1},0) \phi(z_{1},0) \, d\mu^{*} \nonumber\\
&~~~+ \int_{\mathbb{R}^{d}} c^{\text{eff}} w(z_{1},0) \phi(z_{1},0) \, d\mu^{*} = \nu \langle \rho_{\Psi}(0, \cdot) \rangle \int_{\mathbb{R}^{d}} w(z_{1},0) \phi(z_{1}, 0) \, d\mu^{*}.
 \end{align*}
Choosing $N_{i}$ as a test function in the cell problem of $N_{k}$ we have
\begin{align*}
    &\int_{Y} a_{\Psi}(0,\zeta) \nabla_{\zeta} ( N_{k} + \zeta_{k}) \cdot \nabla_{\zeta} N_{i} \, d\zeta = 0,
\end{align*}
then $A_{ik}^{\text{eff}}$ can be written as 
\begin{align*}
    A_{ik}^{\text{eff}} = \int_{Y} a_{\Psi}(0, \zeta) \nabla_{\zeta}(N_{k} + \zeta_{k}) \cdot \nabla_{\zeta}(N_{i} + \zeta_{i}) \, d\zeta
\end{align*}
which gives
\begin{align*}
A_{11}^{\text{eff}} = A^{\text{eff}} e_{1} \cdot e_{1}
& = \frac{1}{|Y|} \int_{Y} a(0, \zeta) \nabla(\zeta_{1} + N_{1}(\zeta)) \cdot \nabla(\zeta_{1} + N_{1}(\zeta)) \, d\zeta\\ 
&\ge \frac{\Lambda}{|Y|} \int_{Y} | \nabla(\zeta_{1} + N_{1}(\zeta)) |^{2} \, d\zeta.
\end{align*}
Assuming that $\partial_{\zeta_{i}}(\zeta_{1} + N_{1}(\zeta)) = 0$, for all $i$ leads to the contradiction since $N_{1}$ is periodic in $\zeta_{1}$. Thus, the effective coefficient is strictly positive.

Denoting $a^{\text{eff}} := A_{11}^{\text{eff}}$, $w(z_{1}) := w(z_{1}, 0)$, the last integral identity is  the weak formulation of the harmonic oscillator equation on $\mathbb R$:
\begin{align}
\label{limit_rescled_prob}
    &-a^{\text{eff}} w'' + \left( \frac{\mu''(0)}{2} \langle \rho_{\Psi}(0, \cdot) \rangle |z_{1}|^{2} + c^{\text{eff}} \right) w
    = \nu \langle \rho_{\Psi}(0, \cdot) \rangle w
,\quad w \in L^{2}(\mathbb{R}).
\end{align}
Due to the normalization condition and the strong convergence of $w^{\ve}$ in $L^{2}(\mathbb{R}^{d}, \mu_{\ve})$ (see Lemma \ref{lm:compactness}), the limit function $w(z_{1}) \neq 0$. Thus, $(\nu, w(z_{1}))$ is an eigenpair of the effective spectral problem \eqref{limit_rescled_prob}.
\begin{remark}
    The eigenpairs $(\nu_{j}, w_{j})$ of the Sturm-Liouville problem
    \begin{align*}
        -a^{\text{eff}} w'' + \left( \frac{\mu''(0)}{2} \langle \rho
        _{\Psi}(0, \cdot)\rangle |z_{1}|^{2} + c^{\text{eff}} \right) w = \nu \langle \rho_{\Psi}(0, \cdot) \rangle w, \quad w \in L^{2}(\mathbb{R})
    \end{align*}
    admit explicit representation: 
    \begin{align*}
    &\nu_{j}  = \big(c^{\text{eff}} + (2j - 1) \sqrt{\frac{a^{\text{eff}} \mu''(0)}{2}}\big)/\langle \rho_{\Psi}(0, \cdot) \rangle,\\
    &
    w_{j}(z_{1}) = {H_{j}(\theta^{1/4} z_{1})} e^{-\sqrt{\theta}z_{1}^{2}/2}, \,\, j = 1,2, \ldots,
    \end{align*}
    where $\displaystyle\theta = \frac{\mu''(0)}{2a^{\text{eff}}}$ and $\displaystyle H_{j}(x) = e^{x^{2}} \frac{d^{j-1}}{dx^{j-1}} e^{-x^{2}}$ are the Hermite polynomials. Note that the eigenvalues $\nu_j$ are simple \cite{Bohm1986}.
\end{remark}
\section{Convergence of spectra}
\label{sec convergence of spectra}

In this section we will show that the $j$th eigenvalue of the rescaled problem \eqref{eq:rescle-final} converges to the $j$th eigenvalue of the homogenized problem \eqref{limit_rescled_prob}, as well as the convergence of the corresponding eigenfunctions. 
\begin{lemma}
    For sufficiently small $\ve$, along a subsequence, the eigenvalues $\ve \nu_j^\ve$ of problem \eqref{eq:rescle-final} are simple.
\end{lemma}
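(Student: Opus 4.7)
The plan is to argue by contradiction, exploiting the simplicity of the harmonic oscillator eigenvalues noted in the remark at the end of Section 4.3 together with the compactness already used to derive the limit problem \eqref{limit_rescled_prob}.

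Suppose the statement fails: there exists an index $j$ and a sequence $\ve_k \to 0$ such that $\ve_k \nu_j^{\ve_k} = \ve_k \nu_{j+1}^{\ve_k}$ for all $k$, so that this common value has geometric multiplicity at least two. I would then select two eigenfunctions $w_j^{\ve_k}$ and $w_{j+1}^{\ve_k}$ corresponding to this eigenvalue, orthonormal with respect to the singular measure $\mu_{\ve_k}$:
\begin{align*}
\int_{\mathbb{R}^d} w_j^{\ve_k} w_{j+1}^{\ve_k}\, d\mu_{\ve_k} = 0, \qquad \int_{\mathbb{R}^d} |w_i^{\ve_k}|^2\, d\mu_{\ve_k} = 1, \quad i \in \{j,j+1\}.
\end{align*}

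Next, I would apply the a priori bounds from Lemma \ref{lm:a priori est w_eps} and the two-scale compactness statement of Section 4.2 (together with the strong convergence in $L^2(\mathbb{R}^d,\mu_\ve)$ mentioned in the reference to Lemma \ref{lm:compactness}) to extract a further subsequence along which both eigenfunctions two-scale converge, strongly in $L^2(\mathbb{R}^d,\mu_{\ve_k})$, to limits $w_j^*, w_{j+1}^* \in H^1(\mathbb{R},\mu^*)$. Repeating the passage to the limit carried out in Section 4.3 verbatim for each of the two sequences, I would conclude that both $w_j^*$ and $w_{j+1}^*$ solve the limit harmonic oscillator problem \eqref{limit_rescled_prob} with the common eigenvalue $\nu = \lim_k \ve_k \nu_j^{\ve_k}$. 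Since the eigenvalues of \eqref{limit_rescled_prob} are simple, the associated eigenspace is one-dimensional, so $w_{j+1}^* = \alpha\, w_j^*$ for some scalar $\alpha$. Passing to the limit in the orthonormality and normalization conditions then yields
\begin{align*}
\int_{\mathbb{R}} w_j^* w_{j+1}^*\, d\mu^* = 0, \qquad \int_{\mathbb{R}} |w_j^*|^2\, d\mu^* = 1,
\end{align*}
which forces $\alpha = 0$ and contradicts $\|w_{j+1}^*\|_{L^2(\mathbb{R},\mu^*)} = 1$. A standard diagonal extraction over $j$ then produces a single subsequence along which every $\ve \nu_j^\ve$ is simple.

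The main obstacle I anticipate is the justification of the passage to the limit in the $L^2(\mathbb{R}^d,\mu_{\ve_k})$-inner products toward $L^2(\mathbb{R},\mu^*)$, since $\mu_\ve$ is a singular measure concentrated on the rescaled thin cylinder and $\mu^*$ charges only the $z_1$-axis. This is precisely where the strong two-scale convergence of the eigenfunctions (not merely the weak one) is needed, so I would invoke Lemma \ref{lm:compactness} and the mean-value property of \ref{Appendix B} to transfer both the normalization and orthogonality to the limit. Once these two identities for $w_j^*, w_{j+1}^*$ are established, the same argument extends to any putative multiplicity $m \geq 2$ by selecting $m$ orthonormal eigenfunctions and reaching the same dimension contradiction against simplicity of $\nu_j$.
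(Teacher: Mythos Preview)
Your proposal is correct and follows essentially the same contradiction argument as the paper: assume multiplicity at least two, choose two $L^2(\mathbb{R}^d,\mu_\ve)$-orthonormal eigenfunctions, use the strong $L^2$-compactness (Lemma \ref{lm:compactness}) to pass to the limit, and contradict the simplicity of the harmonic oscillator eigenvalues. Your write-up is in fact more careful than the paper's in flagging the role of strong (rather than weak) convergence when passing to the limit in the orthonormality relations and in noting the diagonal extraction over $j$.
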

\begin{proof}
{Suppose that some eigenvalue $\ve \nu^{\ve}$ of \eqref{eq:rescle-final} has multiplicity at least two for all $\ve>0$, i.e. there exist two linearly independent eigenfunctions $v_{1}^{\ve}$, $v_{2}^{\ve}$ corresponding to $\ve \nu^{\ve}$. Let us normalize the eigenfunctions by $\int_{\mathbb R^d} v_1^\ve v_2^\ve\, d\mu_\ve = \delta_{ij}$. By the strong $L^2$-compactness, $v_{1}^{\ve}$ and $v_{2}^{\ve}$, extended by zero to $\mathbb R \times \ve^{1/2} Q$, converge strongly in $L^{2}(\mathbb{R}^{d}, \mu_\ve)$ to the eigenfunctions $v_1, v_2$ of the limit problem corresponding to some eigenvalue $\nu^{*}$. Passing to the limit in the normalization condition yields $\int_{\mathbb R} v_1\, v_2\, dx =0$.
Since the eigenvalues of the one-dimensional limit problem are simple, $v_{1}$, $v_{2}$ should be linearly dependent, which leads to a contradiction.
}
\end{proof}

The next lemma shows that the convergence of eigenvalues, as $\ve \to 0$, preserves the order.
\begin{lemma}
   For any $j$, the $j$th eigenvalue $\nu_{j}^{\ve}$ of problem \eqref{eq:rescle-final} converges to the $j$th eigenvalue $\nu_{j}$ of \eqref{limit_rescled_prob}, and the corresponding eigenfunction $w_j^\ve$ converges, along a subsequence, to the eigenfunction $w_j$ of \eqref{limit_rescled_prob}.  
\end{lemma}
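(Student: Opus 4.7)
The plan is to prove this by a standard two-sided min-max argument combined with induction on $j$, relying on the compactness and two-scale convergence already established in Section~\ref{sec passage of limit}, on the simplicity of the limit eigenvalues (noted in the remark after \eqref{limit_rescled_prob}), and on the simplicity result for $\nu_j^\ve$ proved in the preceding lemma.

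\emph{Upper bound via test functions.} First, I would show $\limsup_{\ve\to 0}\nu_j^\ve \le \nu_j$. The idea is to build, out of the $j$-th Hermite eigenfunction $w_j$ of \eqref{limit_rescled_prob}, a trial function at the $\ve$-level. The natural ansatz is
\begin{equation*}
\Phi_j^\ve(z) := w_j(z_1)\bigl(1+\sqrt{\ve}\, N(\zeta)\cdot \nabla^{\mu^*} w_j(z_1)/w_j(z_1)\bigr)\Big|_{\zeta=z/\sqrt\ve}\,\chi_\ve(z_1),
\end{equation*}
where $N$ solves \eqref{rescle_cell_problem} and $\chi_\ve$ is a cut-off adapted to the interval $(-\ve^{-1/2},\ve^{-1/2})$ (needed because $w_j$ decays as a Gaussian so the Dirichlet condition at $\pm\ve^{-1/2}$ is satisfied up to an exponentially small error). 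Plugging $\Phi_j^\ve$ into the Rayleigh quotient associated with \eqref{eq:weak-w_eps-measure-1}, reversing the computations of Section~\ref{sec passage of limit}, one obtains that the quotient tends to $\nu_j$. The min-max principle applied to the span of $\Phi_1^\ve,\dots,\Phi_j^\ve$, which is asymptotically $j$-dimensional because the Hermite functions are linearly independent, yields $\nu_j^\ve \le \nu_j + o(1)$.

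\emph{Lower bound via orthogonality and induction on $j$.} For the matching lower bound, I proceed by induction. The base case $j=1$: by the a priori estimate \eqref{eq:est-nu^eps}, along a subsequence $\nu_1^\ve \to \nu_1^*$ with $\nu_1^* \in \sigma$(limit problem), so $\nu_1^* \ge \nu_1$ by minimality; combined with the upper bound this forces $\nu_1^*=\nu_1$. For the inductive step, assume $\nu_i^\ve \to \nu_i$ and $w_i^\ve$ converges (strongly in $L^2(\mathbb{R}^d,\mu_\ve)$, two-scale to $w_i(z_1)$) for $i<j$. Along a subsequence $\nu_j^\ve \to \nu_j^*$ and $w_j^\ve$ two-scale converges to some $w_j^*$, with $(\nu_j^*,w_j^*)$ an eigenpair of \eqref{limit_rescled_prob} (by repeating the passage to the limit of Section~\ref{sec passage of limit}), and $w_j^*\not\equiv 0$ thanks to the strong compactness lemma and the normalization. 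Now pass the discrete orthogonality relations
\begin{equation*}
\int_{\mathbb R^d} w_i^\ve\, w_j^\ve\, d\mu_\ve = \delta_{ij},\qquad i\le j,
\end{equation*}
to the limit using strong two-scale convergence to conclude $\int_{\mathbb R} w_i\, w_j^*\, dz_1 = 0$ for $i<j$ and $\int_{\mathbb R}(w_j^*)^2\, dz_1 = 1$. Since the Hermite eigenfunctions of \eqref{limit_rescled_prob} are simple and form a complete orthonormal basis, $w_j^*$ orthogonal to $w_1,\dots,w_{j-1}$ forces $\nu_j^* \ge \nu_j$. Together with the upper bound, $\nu_j^\ve \to \nu_j$, and the whole sequence converges because the limit is unique; convergence of $w_j^\ve$ along a subsequence then follows from the two-scale machinery.

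\emph{Main obstacle.} The routine-looking induction actually has one delicate point: passing the orthogonality $\int w_i^\ve w_j^\ve\, d\mu_\ve = \delta_{ij}$ to a nontrivial limit identity in $L^2(\mathbb{R})$. This requires \emph{strong} two-scale convergence of $w_j^\ve$ in $L^2(\mathbb R^d,\mu_\ve)$, not just weak; this is where the a priori estimate \eqref{eq:estimate}, which controls $\|z_1 w^\ve\|_{L^2(\mathbb R^d,\mu_\ve)}$ and thus prevents mass from escaping to infinity, is crucial. Equally, the construction of $\Phi_j^\ve$ in the upper-bound step must be done carefully enough that the Dirichlet condition at the boundaries $z_1=\pm\ve^{-1/2}$ is met and that the residual terms arising from the cutoff $\chi_\ve$ do not spoil the Rayleigh quotient at order $O(1)$; the Gaussian decay of the Hermite functions $w_j$ is what makes this possible. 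Once these two technical points are handled, the rest of the argument is a clean min-max/induction combination.
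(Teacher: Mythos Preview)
Your proposal is correct and uses the same core ingredients as the paper: the test function $\Phi_j^\ve$ built from the limit eigenfunction $w_j$, the corrector $N$ from \eqref{rescle_cell_problem}, and a cutoff near $z_1=\pm\ve^{-1/2}$, together with the compactness from \eqref{eq:estimate} to ensure nontrivial limits and the simplicity of the harmonic-oscillator spectrum. The paper packages the same argument as a proof by contradiction (assuming $\nu_1^\ve\to\nu_2$ and using the test function $W_\ve$ to undercut the variational minimum), written out only for $j=1$, whereas you give the direct two-sided min--max argument with an explicit induction on $j$; the substance is identical.
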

\begin{proof}
    By the a priori estimates and the compactness, we have proved that all the eigenvalues of \eqref{eq:rescle-final} converge to some eigenvalues of \eqref{limit_rescled_prob}. It is left to prove that all eigenvalues of the effective problem \eqref{limit_rescled_prob} are limits of some eigenvalues of \eqref{eq:rescle-final}. 
    \noindent
    We follow the ideas of the proof of Lemma 3.12 in \cite{PaPe-ApplicAnal-2014}  and will prove this by contradiction. Assume that the first eigenvalue $\nu_{1}^{\ve}$ of \eqref{eq:rescle-final}, converge to the second eigenvalue $\nu_{2}$ of \eqref{limit_rescled_prob}, and not to $\nu_1$. The first eigenvalue $\nu_{1}^{\ve}$ is simple and the corresponding eigenfunction $v_{1}^{\ve}$ converge to the eigenfunction $v_{2}$.
    By the variational principle,
    \begin{align*}
        \nu_{1}^{\ve} = \inf_{w} \frac{\mathcal{F}_{\ve}(w)}{\int_{\mathbb{R}^{d}} \rho_{\Psi}^{\ve} w^{2}\, d\mu_{\ve}},
    \end{align*}
    where the infimum is taken over $H^{1}(\mathbb{R}^{d}, \mu_{\ve}) \setminus \{0\}$ such that $w\Big|_{z_{1} = \ve^{-1/2} \Gamma_{\ve}^{\pm}} = 0$, and
    \begin{align*}
        \mathcal{F}_{\ve}(w) :=
      &\int_{\mathbb{R}^{d}} a_{\Psi}^{\ve}(\sqrt{\ve}z) \nabla w \cdot \nabla w \, d\mu_{\ve}  + \int_{\mathbb{R}^{d}}  (a^{\ve} \nabla_{z} \Psi^{\ve} \cdot \nabla(\Psi^{\ve} w^{2} ) \, d\mu_{\ve}\\
      &- \int_{\mathbb{R}^{d}} \Psi^{\ve} \mathrm{div}_{z} (a^{\ve} \nabla_{\zeta} \Psi^{\ve} ) w^{2} \, d\mu_{\ve}\\
    &+ \int_{\mathbb{R}^{d}} \frac{\mu(\sqrt{\ve}z_{1}) - \mu(0)}{\ve} \rho_{\Psi}^{\ve}(\sqrt{\ve}z) w^{2} \, d\mu_{\ve}.
  \end{align*}
    The minimum is attained on the first eigenfunction $w_{1}^{\ve}$. Since $\nu_{1}^{\ve} \rightarrow \nu_{2}$, as $\ve \rightarrow 0$, we can write $\nu_{1}^{\ve} = \nu_{2} + o(1)$, as $\ve \rightarrow 0$.

    Let $w_{1}(z_{1})$ be the first eigenfunction of \eqref{limit_rescled_prob} and $N$ be the normalized solution of the auxiliary cell problem {satisfied by $N_{1}$}. Denote
    \begin{align*}
        W_{\ve} = \left( w_{1}(z_{1}) + \ve^{1/2} N \left( \frac{z}{\sqrt{\ve}} \right) w_{1}'(z_{1}) \right) \phi_{\ve}(z_{1}),
    \end{align*}
    where
    \begin{align*}
        \phi_{\ve}(z_{1}) = \begin{cases}
            &1, ~~ z_{1} \in [-\frac{\ve^{-1/2}}{6} , \frac{\ve^{-1/2}}{6}]\\
            &0, ~~ z_{1} \in \mathbb{R} \setminus [-\frac{\ve^{-1/2}}{3} , \frac{\ve^{-1/2}}{3}]
        \end{cases}
    \end{align*}
    and such that $0 \le \phi_{\ve} \le 1$, $| \phi_{\ve}'(z_{1}) | \le C \ve^{1/2}$.\\
The cut-off function is introduced in order to make the test function $W_{\ve}$ satisfy the Dirichlet boundary condition at the ends of the rod. Using Corollary \eqref{corollary_two scale_average} and taking into account the exponential decay of $w_{1}(z_{1})$, we estimate the norm of $W_\ve$:
    \begin{align*}
        \|W_{\ve}\|^{2}_{L^{2}(\mathbb{R}^{d}, \mu_{\ve})} &= \int_{\mathbb{R}^{d}} \Big( w_{1}(z_{1}) + \ve^{1/2} N \Big( \frac{z}{\sqrt{\ve}} \Big) w_{1}'(z_{1}) \Big)^{2} |\phi_{\ve}(z_{1})|^{2} \, d\mu_{\ve}\\
        &= \int_{\mathbb{R}^{d}} |w_{1}(z_{1})|^{2} |\phi_{\ve}(z_{1})|^{2} \, d\mu_{\ve} + \ve \int_{\mathbb{R}^{d}} \left|N \left(\frac{z}{\sqrt{\ve}} \right) w_{1}'(z_{1}) \right|^{2} |\phi_{\ve}(z_{1})|^{2} \, d\mu_{\ve}\\
        &~~~~+ 2\sqrt{\ve} \int_{\mathbb{R}^{d}} w_{1}(z_{1}) N \left(\frac{z}{\sqrt{\ve}} \right) w_{1}'(z_{1}) |\phi_{\ve}(z_{1})|^{2} \, d\mu_{\ve} \\
        &= \int_{\mathbb{R}^{d}} |w_{1}(z_{1})|^{2} |\phi_{\ve}(z_{1})|^{2} \, d\mu_{\ve} + o(\ve)\\
        &= \int_{\mathbb{R}} |w_{1}(z_{1})|^{2} \, dz_{1} + o(\sqrt{\ve}), ~\text{as}~ \ve \rightarrow 0.
    \end{align*}
    \noindent
    Let us compute the derivatives of $W_\ve$:
    \begin{align*}
        &\partial_{z_{i}} W_{\ve} = \left( \delta_{1i} + \partial_{\zeta_{i}} N(\zeta)\Big|_{\zeta = z/\sqrt{\ve}} \right) w_{1}'(z_{1}) \phi_{\ve}(z_{1}), ~~i \neq 1,\\
        &\partial_{z_{1}} W_{\ve} = \left( \delta_{11} + \partial_{\zeta_{1}} N(\zeta)\Big|_{\zeta = z/\sqrt{\ve}} \right) w_{1}'(z_{1}) \phi_{\ve}(z_{1}) + \ve^{1/2} N \left(\frac{z}{\sqrt{\ve}} \right) w_{1}''(z_{1}) \phi_{\ve}(z_{1}) \\
        &~~~~~~~~~+ \left( w_{1}(z_{1}) + \ve^{1/2} N \left(\frac{z}{\ve} \right) w_{1}'(z_{1}) \right) \phi_{\ve}'(z_{1}).
    \end{align*}
    Sustituting $W_{\ve}$ into the functional $\mathcal{F}_{\ve}$, we obtain
    \begin{align*}
        \mathcal{F}_{\ve}(W_{\ve}) &= \int_{\mathbb{R}^{d}} (a_{\Psi}^{\ve})_{ij} \left( \delta_{1j} + \partial_{\zeta_{j}} N \left(\frac{z}{\sqrt{\ve}} \right) \right) \left( \delta_{1i} + \partial_{\zeta_{i}} N \left(\frac{z}{\sqrt{\ve}} \right) \right) \phi_{\ve}^{2}(z_{1}) (w_{1}'(z_{1}))^{2} \, d\mu_{\ve}\\
        &~~+ \int_{\mathbb{R}^{d}} (a^{\ve} \nabla_{z} \Psi^{\ve}) \cdot \nabla(\Psi^{\ve} W_{\ve}^{2}) \, d\mu_{\ve} + \frac{1}{\sqrt{\ve}} \int_{\mathbb{R}^{d}} \Psi^{\ve} \mathrm{div}_{z} (a^{\ve} \nabla_{\zeta} \Psi^{\ve}) W_{\ve}^{2} \, d\mu_{\ve}\\
        &~~+ \int_{\mathbb{R}^{d}} \frac{\mu(\sqrt{\ve}z_{1}) - \mu(0)}{\ve} \rho_{\Psi}^{\ve} W_{\ve}^{2} \, d\mu
        _{\ve} + o(1), ~~\ve \rightarrow 0.
    \end{align*}
    Taking the limit as $\ve \rightarrow 0$
    \begin{align*}
        \mathcal{F}_{\ve}(W_{\ve}) &= \int_{\mathbb{R} \times Y} (a_{\Psi}(0, \zeta))_{ij} ( \delta_{1j} + \partial_{\zeta_{j}} N(\zeta)) ( \delta_{1i} + \partial_{\zeta_{i}} N(\zeta ) (w_{1}'(z_{1}))^{2} \, d\zeta dz_{1}\\
        &+ \int_{\mathbb{R} \times Y} (a \nabla_{x} \Psi)(0, \zeta) \cdot \nabla_{\zeta} (\Psi(0, \zeta)) (w_{1}(z_{1}))^{2} \, d\zeta dz_{1}\\
        &- \int_{\mathbb{R} \times Y} \Psi(0, \zeta) \mathrm{div}(a \nabla_{\zeta} \Psi)(0, \zeta) (w_{1}(z_{1}))^{2} \, d\zeta dz_{1}\\
        &+ \frac{\mu''(0)}{2} \langle \rho_{\Psi}(0, \cdot) \rangle \int_{\mathbb{R}} |z_{1}|^{2}( w_{1}(z_{1}))^{2} \, dz_{1} + o(1)\\
        &= \int_{\mathbb{R}} a^{\text{eff}} (w_{1}(z_{1}))^{2} \, dz_{1} + \int_{\mathbb{R}} c^{\text{eff}} (w_{1}(z_{1}))^{2} \, dz_{1}\\
        &+ \frac{\mu''(0)}{2} \langle \rho_{\Psi}(0, \cdot) \rangle \int_{\mathbb{R}} |z_{1}|^{2}( w_{1}(z_{1}))^{2} \, dz_{1} + o(1), ~\ve \to 0.
    \end{align*}
    Therefore,
    \begin{align*}
       &\frac{ \mathcal{F}(W_{\ve})}{\int_{\mathbb{R}^{d}}\rho_{\Psi}^\ve W_{\ve}^{2} \, d\mu_{\ve}}\\
       &= \frac{\int_{\mathbb{R}} a^{\text{eff}} (w_{1}')^{2} \, dz_{1} + \int_{\mathbb{R}} c^{\text{eff}} (w_{1})^{2} \, dz_{1} + \frac{\mu''(0)}{2} \langle \rho_{\Psi}(0, \cdot) \rangle \int_{\mathbb{R}} |z_{1}|^{2}( w_{1})^{2} \, dz_{1}} { \langle 
\rho_{\Psi}(0, \cdot) \rangle \int_{\mathbb{R}} (w_{1})^{2} \, dz_{1}}+ o(1)\\
&= \nu_{1} + o(1), ~\ve \to 0.
    \end{align*}
Since $\nu_{1}<\nu_2$, we have constructed a test function giving a smaller eigenvalue than $\nu_{1}^{\ve} = \nu_{2} + o(1)$, which is a contradiction.
\end{proof}

\appendix
\section{Two-scale convergence in spaces with measure}
\label{Appendix A}
\begin{definition}
Let $\mu_\ve$ be the measure defined by \eqref{def:mu_eps}.
A sequence $g^\ve(x) \in L^{2}(\mathbb{R}^{d}, \mu_{\ve})$ is said to converge weakly in $L^{2}(\mathbb{R}^{d}, \mu_{\ve})$ to a function $g(x_{1}) \in L^{2}(\mathbb{R}^{d}, \mu^{*})$, as $\ve \rightarrow 0$, if
\begin{itemize}
\item[(i)] 
$\|g^{\ve}\|_{L^{2}(\mathbb{R}^{d}, \mu_{\ve})} \le C$,

\item[(ii)] for any $\phi \in C_{c}^{\infty}(\mathbb{R}^{d})$ the following relation holds:
\begin{align*}
    \lim_{\ve \rightarrow 0} \int_{\mathbb{R}^{d}} g^{\ve}(x) \phi(x) \, d\mu_{\ve} = \int_{\mathbb{R}^{d}} g(x_{1}) \phi(x) \, d\mu^{*}.
\end{align*}
\end{itemize}
A sequence $g^{\ve}$ is said to converge strongly to $g(x_{1})$ in $L^{2}(\mathbb{R}^{d}, \mu_{\ve})$, as $\ve \rightarrow 0$, if it converges weakly and 
\begin{align*}
    \lim_{\ve \rightarrow 0} \int_{\mathbb{R}^{d}} g^{\ve}(x) \psi^{\ve}(x) \, d\mu_{\ve} = \int_{\mathbb{R}^{d}} g(x_{1}) \psi(x_{1}) \, d\mu^{*}
\end{align*}
for any sequence $\{\psi^{\ve}(x)\}$ weakly converging to $\psi(x_{1})$ in $L^{2}(\mathbb{R}^{d}, \mu_{\ve})$.
\end{definition}
The weak compactness results are valid in spaces with measure, we refer to \cite{ZikovTwoscale}.\\

In the present context, two-scale convergence is described as follows.

\begin{definition}
    We say $g^{\ve} \in L^{2}(\mathbb{R}^{d}, \mu_{\ve})$ converges two-scale weakly, as $\ve \rightarrow 0$, in $L^{2}(\mathbb{R}^{d}, \mu_{\ve})$ if
    \begin{itemize}
    \item[(i)]
    $\|g^{\ve}\|_{L^{2}(\mathbb{R}^{d}, \mu_{\ve})} \le C, ~~~ \ve >0$,

    \item[(ii)] there exists a function $g(x_{1},y) \in L^{2}(\mathbb{R}^{d} \times Y, \mu^{*} \times d\zeta)$ such the following limit relation holds:
    \begin{align*}
        \lim_{\ve \rightarrow 0} \int_{\mathbb{R}^{d}} g^{\ve}(x) \phi(x) \psi(\frac{x}{\sqrt{\ve}}) \, d\mu_{\ve} = \frac{1}{|Y|} \int_{\mathbb{R}^{d}} \int_{Y} g(x_{1},y) \phi(x_{1}, 0) \psi(\zeta) \, d\zeta \, d\mu^{*},
    \end{align*}
    for any $\phi \in C_{c}^{\infty}(\mathbb{R}^{d})$ and $\psi(\zeta) \in C^{\infty}(\Bar{Y})$ periodic in $\zeta_{1}$.
    \end{itemize}
\end{definition}
We write $g^{\ve} \overset{2}{\rightharpoonup} g(x_{1},y)$
if $g^{\ve}$ converges two-scale weakly to $g(x_{1},y)$ in $L^{2}(\mathbb{R}^{d}, \mu_{\ve})$.\\

Note that the last definition holds for more general classes of test functions, e.g. $\Phi(x,y) \in C(\mathbb{R}^{d}; L^{\infty}(Y))$ or $\Phi(x,y) = \phi(x) \psi(y)$ with $\phi \in C(\mathbb{R}^{d})$, $\psi \in L^{2}(Y)$, \cite{ZikovTwoscale}.\\
\begin{lemma}
   Suppose that $g^{\ve}$ satisfies the following estimate
   $$\|g^{\ve}\|_{L^{2}(\mathbb{R}^{d}, \mu_{\ve})} \le C.$$
Then $g^{\ve}$, up to a subsequence, converges two-scale weakly in $L^{2}(\mathbb{R}^{d}, \mu_{\ve})$ to some function $g(x_{1},y) \in L^{2}(\mathbb{R}^{d} \times Y, \mu^{*} \times d\zeta)$.   
\end{lemma}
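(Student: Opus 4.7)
The plan is to adapt the classical Nguetseng--Allaire two-scale compactness argument to the singular measure setting, using the mean-value property for oscillating functions (Corollary referenced earlier in the excerpt, established in Appendix B) as the key substitute for the standard Lebesgue averaging. The general idea is to realise the two-scale limit as the Riesz representative of a bounded linear functional on the product space $L^2(\mathbb{R}^d \times Y,\, \mu^* \times d\zeta)$.

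First, for each admissible test function $\Phi(x,\zeta) \in C_c^{\infty}(\mathbb{R}^d;\, C^{\infty}(\overline{Y}))$ with $\Phi$ $1$-periodic in $\zeta_1$, I would define the linear functional
\begin{align*}
T_{\ve}(\Phi) := \int_{\mathbb{R}^d} g^{\ve}(x)\, \Phi\!\left(x, \tfrac{x}{\sqrt{\ve}}\right) d\mu_{\ve}.
\end{align*}
By the Cauchy--Schwarz inequality in $L^2(\mathbb{R}^d,\mu_{\ve})$ and the hypothesis $\|g^{\ve}\|_{L^2(\mathbb{R}^d,\mu_\ve)} \le C$, one obtains
\begin{align*}
|T_{\ve}(\Phi)| \le C\, \Big\| \Phi(\cdot,\tfrac{\cdot}{\sqrt{\ve}}) \Big\|_{L^2(\mathbb{R}^d,\mu_\ve)}.
\end{align*}
The right-hand side is controlled, via the mean-value property for oscillating functions on the singular measure $\mu_\ve$, by the uniform bound
\begin{align*}
\limsup_{\ve \to 0} \Big\| \Phi(\cdot, \tfrac{\cdot}{\sqrt{\ve}}) \Big\|_{L^2(\mathbb{R}^d,\mu_\ve)}^2
= \frac{1}{|Y|}\int_{\mathbb{R}^d}\int_Y |\Phi(x_1,0,\zeta)|^2\, d\zeta\, d\mu^{*},
\end{align*}
so that $|T_\ve(\Phi)| \le C' \|\Phi\|_{L^2(\mathbb{R}^d\times Y,\,\mu^*\times d\zeta)}$ uniformly in $\ve$ (up to a vanishing error).

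Next, I would exploit separability of the admissible test function space: pick a countable dense family $\{\Phi_k\}$ in $L^2(\mathbb{R}^d\times Y,\,\mu^*\times d\zeta)$. For each fixed $k$, the bounded sequence $T_\ve(\Phi_k) \in \mathbb{R}$ admits a convergent subsequence, and by a standard diagonal extraction one finds a single subsequence $\ve' \to 0$ along which $T_{\ve'}(\Phi_k)$ converges for every $k$. The uniform continuity estimate above then upgrades this to convergence for every $\Phi \in L^2(\mathbb{R}^d\times Y,\mu^*\times d\zeta)$, defining a bounded linear functional $T$ on this Hilbert space. By the Riesz representation theorem there exists a unique $g(x_1,\zeta) \in L^2(\mathbb{R}^d\times Y,\mu^*\times d\zeta)$ representing $T$, whence
\begin{align*}
\lim_{\ve' \to 0} T_{\ve'}(\Phi) = \frac{1}{|Y|}\int_{\mathbb{R}^d}\int_Y g(x_1,\zeta)\,\Phi(x_1,0,\zeta)\, d\zeta\, d\mu^{*},
\end{align*}
which is precisely the two-scale convergence of $g^{\ve}$ to $g(x_1,\zeta)$.

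The main obstacle will be the singular nature of $\mu_\ve$: one cannot invoke the classical mean-value theorem for oscillating functions directly, because $\mu_\ve$ concentrates on the rescaled rod $\ve^{-1/2}\Omega_\ve$. This is where the Corollary from Appendix B is essential, and careful attention must be paid to the fact that the slow variable $x_1$ couples with the fast variable $\zeta = x/\sqrt{\ve}$ at the different scale $\sqrt{\ve}$ rather than $\ve$. A second, minor subtlety is that the limit only sees $\Phi(x_1,0,\zeta)$ on the axis of the rod, consistent with the support of $\mu^*$ on $\mathbb{R}\times\{0\}$; this forces $g$ to depend on $x_1$ alone through the first argument, and needs to be verified by choosing test functions supported transversally.
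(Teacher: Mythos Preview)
The paper does not supply a proof of this lemma: it is stated in Appendix~A as a known compactness result, with the surrounding text referring the reader to Zhikov's work on two-scale convergence in variable spaces \cite{ZikovTwoscale}. So there is no ``paper's own proof'' to compare against; your sketch is in fact a correct outline of the standard Nguetseng--Allaire argument transplanted to the singular-measure setting, which is essentially how the result is proved in the cited reference.

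One small correction: the averaging step you need is not quite Corollary~\ref{corollary_two scale_average}. That corollary is stated for integrals of the form $\int c(\sqrt{\ve}z_1, z/\sqrt{\ve})\,w_\ve^2\,d\mu_\ve$ with $w_\ve\in H^1$ vanishing on the bases, and its error term involves $\|\nabla w_\ve\|$. For the compactness argument you only need the elementary mean-value statement
\[
\lim_{\ve\to 0}\int_{\mathbb{R}^d}\Big|\Phi\big(z,\tfrac{z}{\sqrt{\ve}}\big)\Big|^2 d\mu_\ve
=\frac{1}{|Y|}\int_{\mathbb{R}^d}\int_Y |\Phi(z_1,0,\zeta)|^2\,d\zeta\,d\mu^*
\]
for smooth compactly supported $\Phi$, which follows directly from the explicit form of $\mu_\ve$ (a rescaled Lebesgue measure on $[-\ve^{-1/2},\ve^{-1/2}]\times\sqrt{\ve}\,Q$) by a Riemann-sum argument in the fast variable together with the collapse $x'\in\sqrt{\ve}\,Q\to 0$. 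No $H^1$ control is required here. With that adjustment, the Cauchy--Schwarz bound, diagonal extraction over a countable dense set of test functions, and Riesz representation go through exactly as you describe.
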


\begin{definition}
A sequence $g^{\ve}$ is said to converge two-scale strongly to a function $g(x_{1},y) \in L^{2}(\mathbb{R}^{d} \times Y, \mu^{*} \times d\zeta)$ if
\begin{itemize}
\item[(i)] $g^{\ve} \overset{2}{\rightharpoonup} g(x_{1},y)$.
\item[(ii)] The following limit relation holds
\begin{align*}
    \lim_{\ve \rightarrow 0} \int_{\mathbb{R}^{d}} (g^{\ve}(x))^{2} \, d\mu_{\ve} = \frac{1}{|Y|} \int_{\mathbb{R}^{d}} \int_{Y} (g(x_{1},y))^{2} \, d\zeta \, d\mu^{*}
\end{align*}
\end{itemize}
\end{definition}
We write $g^{\ve} \overset{2}{\rightarrow} g(x_{1},y)$
if $g^{\ve}$ converges two-scale strongly to the function $g(x_{1},y)$ in $L^{2}(\mathbb{R}^{d}, \mu_{\ve})$.

\begin{lemma}
\label{lm:compactness}
    Assume that $v_\ve$ is such that
    \begin{align*}
        \int_{\mathbb R^d} |\nabla v|^2\, d\mu_\ve
        + \int_{\mathbb R^d} |x_1 v|^2\, d\mu_\ve \le C.
    \end{align*}
    Then $v_\ve$ converges strongly in $L^2(\mathbb R^d, \mu_\ve)$ to $v\in L^2(\mathbb R)$.
\end{lemma}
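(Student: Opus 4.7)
The plan is to reduce the problem to a classical compactness statement on the real line via slice averaging, then invoke the well-known compact embedding of the ``harmonic oscillator'' space $\{u\in H^1(\mathbb R): x_1 u\in L^2(\mathbb R)\}$ into $L^2(\mathbb R)$. The measure $\mu_\ve$ concentrates on the thin rod $[-\ve^{-1/2},\ve^{-1/2}]\times\sqrt{\ve}Q$, with the prefactor $\ve^{-(d-1)/2}/|Q|$ chosen so that $\mu_\ve \rightharpoonup \mu^* = dx_1$ on the $x_1$-axis; the natural candidate for the limit must therefore be a function of $x_1$ only, which suggests replacing $v_\ve$ by its cross-sectional average.

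First I would introduce the slice average $\bar v_\ve(x_1) := \tfrac{1}{|\sqrt\ve\, Q|}\int_{\sqrt\ve\, Q} v_\ve(x_1,x')\,dx'$, defined on $[-\ve^{-1/2},\ve^{-1/2}]$ and extended by zero to $\mathbb R$. The Poincar\'e--Wirtinger inequality on the rescaled cross-section, whose Poincar\'e constant scales as $\ve$ (by the substitution $x' = \sqrt\ve\, y'$), gives $\int_{\sqrt\ve Q}|v_\ve - \bar v_\ve|^2\, dx' \le C\ve \int_{\sqrt\ve Q}|\nabla_{x'} v_\ve|^2\, dx'$. Integrating in $x_1$ and rewriting with $\mu_\ve$ yields
\begin{equation*}
\|v_\ve - \bar v_\ve\|_{L^2(\mathbb R^d,\mu_\ve)}^2 \le C\ve\, \|\nabla v_\ve\|_{L^2(\mathbb R^d,\mu_\ve)}^2 \le C\ve \longrightarrow 0,
\end{equation*}
so it suffices to prove strong convergence of $\bar v_\ve$ in $L^2(\mathbb R)$.

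Next I would check uniform bounds in a weighted Sobolev space. Jensen's inequality applied to the average, together with the exact normalization of $\mu_\ve$ (i.e.\ $\int_{\mathbb R} f(x_1)\, dx_1 = \int f(x_1)\, d\mu_\ve$ for $f$ supported in $[-\ve^{-1/2},\ve^{-1/2}]$), gives
\begin{equation*}
\|\bar v_\ve\|_{L^2(\mathbb R)}^2 + \|\bar v_\ve'\|_{L^2(\mathbb R)}^2 + \|x_1\bar v_\ve\|_{L^2(\mathbb R)}^2 \le \|v_\ve\|_{L^2(\mathbb R^d,\mu_\ve)}^2 + \|\nabla v_\ve\|_{L^2(\mathbb R^d,\mu_\ve)}^2 + \|x_1 v_\ve\|_{L^2(\mathbb R^d,\mu_\ve)}^2 \le C.
\end{equation*}
Thus $\{\bar v_\ve\}$ is bounded in $\mathcal D := \{u\in H^1(\mathbb R): x_1 u\in L^2(\mathbb R)\}$. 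The embedding $\mathcal D \hookrightarrow L^2(\mathbb R)$ is compact: local compactness is Rellich--Kondrachov on any bounded interval, and tightness follows from Chebyshev: $\int_{|x_1|>R}|\bar v_\ve|^2\, dx_1 \le R^{-2}\|x_1\bar v_\ve\|_{L^2(\mathbb R)}^2 \le C R^{-2}$, uniformly in $\ve$. Hence, along a subsequence, $\bar v_\ve \to v$ strongly in $L^2(\mathbb R)$ for some $v\in L^2(\mathbb R)$.

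Finally, because $\bar v_\ve$ depends only on $x_1$, the same normalization gives $\|\bar v_\ve - v\|_{L^2(\mathbb R^d,\mu_\ve)} = \|\bar v_\ve - v\chi_{[-\ve^{-1/2},\ve^{-1/2}]}\|_{L^2(\mathbb R)} \to 0$, and combining with the slice estimate yields $v_\ve \to v$ strongly in $L^2(\mathbb R^d,\mu_\ve)$. The only delicate point is the $\ve$-bookkeeping: one must match the cross-sectional volume $|\sqrt\ve\, Q| = \ve^{(d-1)/2}|Q|$ against the factor $\ve^{-(d-1)/2}/|Q|$ in $\mu_\ve$ and against the $\ve$-scaling of the Poincar\'e constant on $\sqrt\ve\, Q$; once these cancellations are written out, the argument reduces to a standard compact-embedding result.
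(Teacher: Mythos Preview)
The paper does not give its own proof of this lemma; it simply refers the reader to Lemma~4.4 in \cite{piat2013localization}. Your argument is self-contained and correct, and it follows exactly the strategy one expects in this setting: reduce to one dimension via cross-sectional averaging, then invoke the compact embedding of the weighted space $\{u\in H^1(\mathbb R):\, x_1u\in L^2(\mathbb R)\}$ into $L^2(\mathbb R)$. The $\ve$-bookkeeping is right: the Poincar\'e--Wirtinger constant on $\sqrt\ve\,Q$ is $O(\ve)$, and the prefactor $\ve^{-(d-1)/2}/|Q|$ in $\mu_\ve$ exactly cancels the cross-sectional volume, so slice-averaged norms in $L^2(\mathbb R)$ coincide with norms in $L^2(\mathbb R^d,\mu_\ve)$ for functions of $x_1$ only.

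One small point: the hypothesis of the lemma bounds only $\|\nabla v_\ve\|$ and $\|x_1 v_\ve\|$, not $\|v_\ve\|_{L^2(\mathbb R^d,\mu_\ve)}$, yet you use the latter when bounding $\|\bar v_\ve\|_{L^2(\mathbb R)}$. In the paper this is harmless because the functions are normalised (see \eqref{eq:norm-cond-rescaled}), but strictly from the stated hypothesis you should derive it: split $\int|\bar v_\ve|^2 = \int_{|x_1|\le 1}+\int_{|x_1|>1}$, control the second piece by $\|x_1\bar v_\ve\|_{L^2}^2$, and bound the first via the fundamental theorem of calculus from a point $x_0\in[1,2]$ where $|\bar v_\ve(x_0)|^2\le \int_1^2|x_1\bar v_\ve|^2$. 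This closes the argument without appealing to the normalisation. Also, as usual in compactness statements, the conclusion should read ``along a subsequence''.
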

{The proof of Lemma \ref{lm:compactness} follows the lines of Lemma 4.4 in \cite{piat2013localization}.}

\section{Integral estimates for oscillating functions}
\label{Appendix B}
The proof of the following integral estimate for oscillating functions can be found in \cite{PaPe-ApplicAnal-2014}.
\begin{lemma}
\label{lm:oscillating-integrals}
    Let $v_{\ve} \in H^{1}_{0}(\Omega_{\ve})$, $v_{\ve}(\pm 1, x') = 0$, and $w(x_{1},y) \in C^{1}(\bar{I}, L^{\infty}(Y))$. Then as $\ve \rightarrow 0$,
    \begin{align*}
        &\int_{\Omega_{\ve}} w(x_{1}, \frac{x}{\ve}) v_{\ve}^{2}(x) \, dx - \frac{1}{|Y|}\int_{\Omega_{\ve}} (\int_{Y} w(x_{1},y) \, dy)\, v_{\ve}^{2}(x) \, dx\\ \vspace{1mm}
        &~~= O(\ve\|v_{\ve}\|_{L^{2}(\Omega_{\ve})} \|\nabla v_{\ve}\|_{L^{2}(\Omega_{\ve})}).
    \end{align*}
\end{lemma}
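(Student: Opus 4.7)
The plan is to build an auxiliary $Y$-periodic vector field whose $y$-divergence equals the oscillatory profile minus its cell average, and then trade a $y$-derivative for an $x$-derivative (gaining a factor of $\ve$). After integration by parts in $x$, the boundary contributions vanish and the bulk term produces the desired $\ve\|v_\ve\|_{L^2}\|\nabla v_\ve\|_{L^2}$ bound.

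Concretely, I would set $g(x_1,y):=w(x_1,y)-\tfrac{1}{|Y|}\int_Y w(x_1,y')\,dy'$, so that $\int_Y g(x_1,\cdot)\,dy=0$ for every $x_1\in\bar I$. The next step is to construct $\mathbf{N}(x_1,y)=(N_1,\ldots,N_d)$, $1$-periodic in $y_1$, satisfying $\mathrm{div}_y\mathbf{N}(x_1,\cdot)=g(x_1,\cdot)$ in $Y$ together with $\mathbf{N}(x_1,y)\cdot\nu_y=0$ on $\Sigma$. The cleanest construction is $\mathbf{N}=\nabla_y\phi$, where $\phi(x_1,\cdot)$ solves the mixed Neumann--periodic cell problem $-\Delta_y\phi=-g$ in $Y$ with $\nabla_y\phi\cdot\nu_y=0$ on $\Sigma$ and periodicity in $y_1$; the solvability condition is exactly $\int_Y g\,dy=0$. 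Because $w\in C^1(\bar I;L^\infty(Y))$, $Q\in C^{2,\alpha}$, and the top/bottom faces of $Y$ are glued via periodicity, standard elliptic estimates give $\mathbf{N},\ \partial_{x_1}\mathbf{N}\in L^\infty(\bar I\times Y)$.

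Setting $\Phi^\ve(x):=\mathbf{N}(x_1,x/\ve)$, the chain rule yields
\begin{align*}
w(x_1,x/\ve)-\tfrac{1}{|Y|}\int_Y w(x_1,y)\,dy \;=\; \ve\,\mathrm{div}_x\Phi^\ve(x)\;-\;\ve\,\partial_{x_1}N_1(x_1,x/\ve).
\end{align*}
Multiplying by $v_\ve^2$ and integrating over $\Omega_\ve$, I would integrate the first term by parts in $x$: the bases $S_{\ve,\pm}$ contribute nothing because $v_\ve(\pm1,x')=0$, and on the lateral boundary $\Sigma_\ve$ the outward normal is purely in the $y'$-directions, so $\Phi^\ve\cdot n=\mathbf{N}\cdot\nu_y=0$ by construction. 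What remains is bounded by $2\ve\|\mathbf{N}\|_{L^\infty}\|v_\ve\|_{L^2(\Omega_\ve)}\|\nabla v_\ve\|_{L^2(\Omega_\ve)}$ via Cauchy--Schwarz, while the second term is bounded by $C\ve\|v_\ve\|_{L^2(\Omega_\ve)}^2$. The latter is absorbed into the target form by the one-dimensional Poincar\'e inequality in $x_1$, valid since $v_\ve$ vanishes on the bases and the length $2$ of the cylinder is independent of $\ve$; this gives $\|v_\ve\|_{L^2(\Omega_\ve)}\le C\|\nabla v_\ve\|_{L^2(\Omega_\ve)}$ with $C$ uniform in $\ve$.

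The main point to verify carefully is the uniform-in-$x_1$ $L^\infty$ bound on $\mathbf{N}$. Since $w$ is only $L^\infty$ in $y$, one cannot hope for classical $C^1$ regularity of $\phi$, but Calder\'on--Zygmund estimates give $\phi(x_1,\cdot)\in W^{2,p}(Y)$ for every finite $p$, whence $\nabla_y\phi\in C^{0,\alpha}(\bar Y)$ by Sobolev embedding with $p>d$; differentiating the cell problem in $x_1$ inherits $C^1$ dependence from $w\in C^1(\bar I;L^\infty(Y))$. Once this regularity is in place, the chain of estimates above closes and yields the asserted bound.
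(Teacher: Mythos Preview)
Your argument is correct and is the standard route to this estimate: replace the mean-zero oscillatory profile by the $y$-divergence of a bounded periodic corrector with vanishing normal flux on $\Sigma$, trade the fast $y$-derivative for an $x$-derivative at the cost of a factor $\ve$, and integrate by parts. The boundary terms indeed vanish (bases by the Dirichlet condition, lateral boundary by the Neumann condition built into $\mathbf N$), and the remainder $\ve\int \partial_{x_1}N_1\,v_\ve^2$ is absorbed via the $x_1$-Poincar\'e inequality, whose constant is uniform in $\ve$ because the rod has fixed length.

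Note, however, that the paper does not give its own proof of this lemma: it simply refers to \cite{PaPe-ApplicAnal-2014}. So there is nothing in the present text to compare your method against. What you wrote is precisely the classical corrector/integration-by-parts argument that underlies such mean-value lemmas in locally periodic homogenization, and it is what one finds in the cited reference. The only point worth tightening is the regularity justification: with $w(x_1,\cdot)\in L^\infty(Y)$ you get $\phi(x_1,\cdot)\in W^{2,p}(Y)$ for all $p<\infty$ and hence $\mathbf N=\nabla_y\phi\in C^{0,\alpha}(\overline Y)$ as you say; the $C^1$ dependence on $x_1$ then follows from linearity of the solution operator and $w\in C^1(\bar I;L^\infty(Y))$, which legitimizes both the chain-rule identity and the bound on $\partial_{x_1}N_1$.
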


\begin{corollary}
\label{corollary_two scale_average}
    Let $w_{\ve} \in H^{1}(\mathbb{R}^{d}, \mu_{\ve})$ be such that $w_{\ve} = 0$ on $\ve^{-1/2} \Gamma_{\ve}^{\pm}$, and $c(x_{1},y) \in C^{1}(\mathbb{R}; L^{\infty}(Y)$. Then as $\ve \rightarrow 0$, 
    \begin{align*}
     &\int_{\mathbb{R}^{d}} c(\sqrt{\ve} z_{1}, \frac{z}{\sqrt{\ve}}) w_{\ve}^{2}(z) \, d\mu_{\ve} - \int_{\mathbb{R}^{d}} \left( \int_{Y} c(\sqrt{\ve}z_{1}, \zeta) \, d\zeta \right) w_{\ve}^{2}(z) \, d\mu_{\ve}\\
     &~~~= O\left( \sqrt{\ve} \|w_{\ve}\|_{L^{2}(\mathbb{R}^{d}, \mu_{\ve})} \| \nabla w_{\ve} \|_{L^{2}(\mathbb{R}^{d}, \mu_{\ve})} \right).
    \end{align*}
\end{corollary}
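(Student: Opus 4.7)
The plan is to reduce the statement to Lemma~\ref{lm:oscillating-integrals} by inverting the rescaling that defines $\mu_\ve$ and returning to the original thin cylinder $\Omega_\ve$. Using the very definition of $\mu_\ve$, I rewrite both integrals on the left-hand side as Lebesgue integrals over $\ve^{-1/2}\Omega_\ve$ with prefactor $\ve^{-(d-1)/2}/|Q|$. I then perform the substitution $x = \sqrt{\ve}\, z$, which maps the rescaled domain back onto $\Omega_\ve$, turns the fast variable $z/\sqrt{\ve}$ into the original fast variable $x/\ve$, and introduces a Jacobian factor $\ve^{-d/2}$. Setting $W_\ve(x) := w_\ve(x/\sqrt{\ve})$, both terms take the form of standard oscillatory integrals against $W_\ve^2$ on the thin cylinder $\Omega_\ve$, with a common $\ve$-dependent prefactor.

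Next, I apply Lemma~\ref{lm:oscillating-integrals} with $w(x_1, y) = c(x_1, y)$ and $v_\ve = W_\ve$. The Dirichlet condition $W_\ve(\pm 1, x') = 0$ is inherited from the hypothesis $w_\ve = 0$ on $\ve^{-1/2}\Gamma_\ve^{\pm}$, and the regularity assumption $c \in C^1(\mathbb{R}; L^\infty(Y))$ matches the requirement on the coefficient in the lemma. The lemma then bounds the difference of the two $\Omega_\ve$ integrals by $O(\ve\,\|W_\ve\|_{L^2(\Omega_\ve)}\,\|\nabla W_\ve\|_{L^2(\Omega_\ve)})$.

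The remaining work is a bookkeeping step to translate the $L^2(\Omega_\ve)$ norms of $W_\ve$ and $\nabla W_\ve$ back into the weighted norms of $w_\ve$ and $\nabla w_\ve$ against $\mu_\ve$. A direct substitution shows that each $L^2(\Omega_\ve)$ norm differs from the corresponding $L^2(\mathbb{R}^d, \mu_\ve)$ norm by an explicit power of $\ve$, with an additional factor $\ve^{-1/2}$ appearing in the gradient norm because of the chain rule applied to $W_\ve(x) = w_\ve(x/\sqrt{\ve})$. Combining these factors with the prefactor $\ve^{-(2d-1)/2}/|Q|$ inherited from the first step, the powers of $\ve$ collapse to exactly $\sqrt{\ve}$, yielding the claimed bound. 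No real analytic difficulty is anticipated: the whole argument rests on Lemma~\ref{lm:oscillating-integrals}, and the only delicate point is to carefully track the $\ve$-powers introduced by the substitution and by the definition of $\mu_\ve$.
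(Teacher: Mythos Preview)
Your approach is correct and is precisely the natural derivation the paper has in mind: the corollary is stated immediately after Lemma~\ref{lm:oscillating-integrals} without proof, so it is meant to follow from that lemma via the change of variables $x=\sqrt{\ve}\,z$ and the definition of $\mu_\ve$, exactly as you outline. Your $\ve$-power bookkeeping is right (the prefactor $\ve^{-(2d-1)/2}$ from the measure and Jacobian, combined with $\ve\cdot\ve^{(2d-1)/4}\cdot\ve^{(2d-3)/4}$ from the lemma and the norm conversions, does collapse to $\sqrt{\ve}$).
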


\section*{Acknowledgments}
I.P. would like to thank Yves Capdeboscq for discussions about the regularity of the solution of an auxiliary periodic spectral problem.

\bibliographystyle{plain}
\bibliography{refs}

\end{document}